\numberwithin{equation}{section}
\newtheorem{lma}{Lemma}[section]
\newaliascnt{thmCt}{lma}
\newtheorem{thm}[thmCt]{Theorem}
\newaliascnt{corCt}{lma}
\newtheorem{cor}[corCt]{Corollary}
\newaliascnt{prpCt}{lma} 
\newtheorem{prp}[prpCt]{Proposition}
\newcounter{theoremintro}
\newtheorem{thmintro}[theoremintro]{Theorem}
\theoremstyle{definition}
\newaliascnt{pgrCt}{lma}
\newtheorem{pgr}[pgrCt]{}
\newaliascnt{dfnCt}{lma}
\newtheorem{dfn}[dfnCt]{Definition}
\newaliascnt{rmkCt}{lma}
\newtheorem{rmk}[rmkCt]{Remark}
\newaliascnt{rmksCt}{lma}
\newaliascnt{qstCt}{lma}
\newtheorem{qst}[qstCt]{Question}
\newaliascnt{pbmCt}{lma}
\newtheorem{pbm}[pbmCt]{Problem}
\newaliascnt{exaCt}{lma}
\newtheorem{exa}[exaCt]{Example}
\newaliascnt{exasCt}{lma}
\newtheorem{exas}[exasCt]{Examples}
\newaliascnt{ntnCt}{lma}
\newtheorem{ntn}[ntnCt]{Notation}
\DeclareMathOperator{\core}{core}
\DeclareMathOperator{\Homeo}{Homeo}
\DeclareMathOperator{\Isom}{Isom}
\DeclareMathOperator{\dom}{dom}
\DeclareMathOperator{\ran}{ran}
\DeclareMathOperator{\asdim}{asdim}
\newcommand{\frakA}{\mathfrak{A}}
\newcommand{\RR}{\mathbb{R}}
\newcommand{\CC}{\mathbb{C}}
\newcommand{\NN}{\mathbb{N}}
\newcommand{\ZZ}{\mathbb{Z}}
\newcommand{\I}{\infty}
\newcommand{\calG}{\mathcal{G}}
\newcommand{\calGZero}{\mathcal{G}^{(0)}}
\newcommand{\andSep}{\,\,\,\text{ and }\,\,\,}
\newcommand{\ca}{$C^*$-algebra}
\newcommand{\casub}{$C^*$-subalgebra}
\newcommand{\K}{{\mathcal{K}}}
\newcommand{\Bdd}{{\mathcal{B}}}
\newcommand{\Onp}{{\mathcal{O}_n^p}}
\newcommand{\Ot}{{\mathcal{O}_2}}
\newcommand{\Otp}{{\mathcal{O}_2^p}}
\newcommand{\id}{{\mathrm{id}}}
\newcommand{\supp}{{\mathrm{supp}}}
\newcommand{\Aut}{{\mathrm{Aut}}}
\newcommand{\spat}{{\mathrm{sp}}}
\newcommand{\tensProj}{\widehat{\otimes}}
\newcommand{\tensMax}{\otimes_{\mathrm{max}}}
\newcommand{\tensSp}{\otimes_{\mathrm{sp}}}
\DeclareMathOperator{\Rep}{Rep}
\title[]{Rigidity results for L\texorpdfstring{$^{\mathrm{\MakeLowercase{p}}}$}{p}-operator algebras and applications}
\subjclass[2010]{Primary:
43A15, 
47L10; 
Secondary:
22A22, 
43A65. 
}
\keywords{$L^p$-space, Banach algebra, crossed product, tensor product, groupoid, Cuntz algebra}
\date{\today}
\thanks{The second named author was partially supported by a Postdoctoral Research Fellowship from the Humboldt Foundation and by the Swedish Research Council Grant 2021-04561.
The second and third named authors were partially supported by the Deutsche Forschungsgemeinschaft (DFG, German Research Foundation) under the SFB 878 (Groups, Geometry \& Actions) and under Germany's Excellence Strategy EXC 2044-390685587 (Mathematics M\"{u}nster: Dynamics-Geometry-Structure).
The third named author was partially supported by the Knut and Alice Wallenberg Foundation (KAW 2021.0140).
}
\author[Yemon Choi]{Yemon Choi}
\address{Yemon Choi
School of Mathematical Sciences, Lancaster University, Lancaster
LA1 4YF, United Kingdom.}
\email{y.choi1@lancaster.ac.uk}
  \urladdr{www.maths.lancs.ac.uk/~choiy1/}
\author[Eusebio Gardella]{Eusebio Gardella}
\address{Eusebio Gardella
Department of Mathematical Sciences, Chalmers University of
Technology and University of Gothenburg, Gothenburg SE-412 96, Sweden.}
\email{gardella@chalmers.se}
\urladdr{www.math.chalmers.se/~gardella}
\author{Hannes Thiel}
\address{Hannes~Thiel, 
Department of Mathematical Sciences, Chalmers University of Technology and University of
Gothenburg, Gothenburg SE-412 96, Sweden.}
\email{hannes.thiel@chalmers.se}
\urladdr{www.hannesthiel.org}
\begin{document}

\begin{abstract}
For $p\in [1,\infty)$, we show that every unital $L^p$-operator algebra contains a unique maximal $C^*$-subalgebra, which is always abelian if $p\neq 2$.
Using this, we canonically associate to every unital $L^p$-operator algebra $A$ an \'etale groupoid $\calG_A$, which in many cases of interest is a complete invariant for $A$.
By identifying this groupoid for large classes of examples, we obtain a number of rigidity results that display a stark contrast with the case $p=2$; the most striking one being that of crossed products by topologically free actions.

Our rigidity results give answers to questions concerning the existence of isomorphisms between different algebras. 
Among others, we show that for the $L^p$-analog $\Otp$ of the Cuntz algebra, there is no isometric isomorphism between $\Otp$ and $\Otp\otimes^p\Otp$, when $p\neq 2$.
In particular, we deduce that there is no $L^p$-version of Kirchberg's absorption theorem, and that there is no $K$-theoretic classification of purely infinite simple amenable $L^p$-operator algebras for $p\neq 2$. 
Our methods also allow us to recover a folklore fact in the case of C*-algebras ($p=2$), namely that no isomorphism $\Ot\cong \Ot\otimes\Ot$ preserves the canonical Cartan subalgebras.
\end{abstract}

\maketitle

\renewcommand*{\thetheoremintro}{\Alph{theoremintro}}

\section{Introduction}

Given $p\in [1,\infty)$, we say that a Banach algebra is an \emph{$L^p$-operator algebra}
if it admits an isometric representation on an $L^p$-space.
The case $p=2$ has been intensively studied and a rich theory has been developed;
for $p\in [1,\infty)\setminus\{2\}$, new challenges arise, and much less is known.

Historically, one important strand arises from Herz's influential works in the 1970s on harmonic analysis on $L^p$-spaces, unifying results of previous authors for abelian or compact groups. Given a locally compact group $G$, Herz studied the Banach algebra $PF_p(G)\subseteq \Bdd(L^p(G))$
generated by the left regular representation, 
as well as the commutant $CV_p(G)\subseteq \Bdd(L^p(G))$ 
of the right translation operators.
For $p=2$, these are respectively the reduced group \ca\ and group von Neumann algebra of $G$.
Both 
$PF_p(G)$ and $CV_p(G)$ have attracted the 
attention of a number of people
during the last two decades \cite{Cow98PredConv, Run05ReprQSL, Cho15DirFin, DawSpr19ConvLp};
an overview of the classical results can be found in \cite{Der11ConvOps} and a more recent survey
can be found in \cite{Gar21ModernLp}.

Many basic tools available in the $C^*$-algebraic setting may fail to hold for operators
on general $L^p$-spaces, making their study very challenging.
Nevertheless, $L^p$-operator algebras have recently seen renewed interest, thanks to the infusion of ideas, examples and techniques from operator algebras, particularly in the works of Phillips \cite{Phi12arX:LpAnalogsCtz, Phi13arX:LpCrProd}. 
There, he introduced and studied the $L^p$-analogs $\Onp$ of the Cuntz algebras $\mathcal{O}_n$ from \cite{Cun77SimpleCAlgGenIsom} (which are the case $p=2$).
These Banach algebras behave in many ways very similarly to the corresponding \ca{s}:
among others, they are simple, purely infinite, amenable, and their $K$-theory
is independent of $p$.
However, the proofs for $p=2$ and $p\neq 2$ differ drastically for most of these.

The work of Phillips has motivated other authors to study $L^p$-analogs of well-studied families of \ca{s}, 
including group algebras \cite{Phi13arX:LpCrProd, GarThi15GpAlgLp, GarThi19ReprConvLq}; 
groupoid algebras \cite{GarLup17ReprGrpdLp}; 
crossed products by topological systems \cite{Phi13arX:LpCrProd}; 
AF-algebras \cite{PhiVio20ClassLpAF, GarLup16NonclassLpUHF}; 
Roe algebras \cite{ChuLi18RigidityLpRoe, BraVig20RoeAlgAsBAlg}; 
and graph algebras \cite{CorGui19LpOpAlgGraphs}. 
In these works, an $L^p$-operator algebra is obtained from combinatorial or dynamical data, and properties
of the underlying data (such as hereditary saturation of a graph, or minimality of an action) are related
to properties of the algebra (such as simplicity).
More recent works have approached the study of $L^p$-operator algebras in a more
abstract and systematic way \cite{GarThi15BAlgInvIsoLp, GarThi20ExtendingRepr, BlePhi19LpOpAlgApproxId1}, 
showing that there is an interesting theory waiting to be unveiled.

The present paper takes a further step in this direction, by studying the internal structure of $L^p$-operator
algebras and their abelian subalgebras, specifically for $p\neq 2$. Our first main result is as follows:

\begin{thmintro}[See \autoref{thm:C*core}]
\label{thmintro:C*core}
Let $p\in [1,\infty)$, and let $A$ be a unital $L^p$-operator algebra.
Then there is a unique maximal unital $C^*$-subalgebra $\core(A)$ of $A$,
called the \emph{$C^*$-core} of $A$. If $p\neq 2$, then $\core(A)$ is abelian.
\end{thmintro}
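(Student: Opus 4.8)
The plan is to define the $C^*$-core as the largest subset of $A$ on which the Banach algebra structure is compatible with a $C^*$-structure, and then to exploit the rigidity of isometric $L^p$-representations to force commutativity when $p\neq 2$. First I would recall the relevant notion of a \emph{hermitian element}: in any unital Banach algebra $A$ represented isometrically on an $L^p$-space, call $a\in A$ hermitian if $\|\exp(ita)\| = 1$ for all $t\in\RR$. The set $A_{\mathrm{h}}$ of hermitian elements is a closed real subspace, and by the classical theory (Vidav--Palmer, Bonsall--Duncan), the complex span $A_{\mathrm{h}} + iA_{\mathrm{h}}$, when it is a subalgebra, carries a natural involution $a+ib\mapsto a-ib$ making it a $C^*$-algebra. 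The candidate for $\mathrm{core}(A)$ is then the largest $C^*$-subalgebra, which one shows equals the closed subalgebra generated by $A_{\mathrm{h}}$ (or more precisely the set of $a\in A$ such that $a$ and all its products with other such elements stay controlled by a $C^*$-norm); uniqueness of the maximal such subalgebra follows because the union of two $C^*$-subalgebras of $A$ generates a $C^*$-subalgebra — this is the first thing to verify carefully, using that the $C^*$-norm on any $C^*$-subalgebra must coincide with the ambient Banach norm (spectral radius = norm on self-adjoints).

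Next, to handle existence and the universal property, I would argue as follows. Every $C^*$-subalgebra $B\subseteq A$ is isometrically represented on an $L^p$-space via the ambient representation of $A$; so it suffices to show there is a \emph{largest} element among $C^*$-subalgebras. Given two unital $C^*$-subalgebras $B_1, B_2$, the closed subalgebra $C$ they generate is spanned (densely) by alternating products of self-adjoint elements from $B_1$ and $B_2$; one wants to equip $C$ with an involution and show it is a $C^*$-algebra. The key point is that hermitian elements of $A$ form a set closed under the ambient structure in the sense needed: if $h\in B_1$ and $k\in B_2$ are self-adjoint (hence hermitian in $A$), then one must control $\|hk\|$ and verify the $C^*$-identity on the generated algebra. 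Here I expect to invoke a result about $L^p$-representations restricting the possible hermitian elements — this is where $p\neq 2$ will enter. Concretely, the main structural input should be: for $p\neq 2$, an isometric $L^p$-representation of a $C^*$-algebra factors through an \emph{abelian} quotient, equivalently, the only $C^*$-algebras admitting isometric representations on $L^p$-spaces for $p\neq 2$ are the commutative ones. This is presumably established via the fact that the isometries of $L^p$ ($p\neq 2$) are, by the Banach--Lamperti theorem, generalized permutations (weighted composition operators), so the image of any unitary group has a very rigid form, and a one-parameter unitary group $t\mapsto \exp(ith)$ consisting of such isometries forces $h$ to be "diagonal".

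The heart of the matter, and the step I expect to be the main obstacle, is proving that $\mathrm{core}(A)$ is abelian when $p\neq 2$. The plan is: take self-adjoint $h,k\in\mathrm{core}(A)$, hence hermitian elements of $A$; then $\exp(ith)$ and $\exp(isk)$ are isometries on the underlying $L^p$-space for all $s,t\in\RR$. By the Lamperti/Banach description of isometries of $L^p$-spaces for $p\neq 2$, each of these is a weighted composition operator, and the map $t\mapsto\exp(ith)$ is a strongly continuous one-parameter group of such operators. I would then analyze the structure of such one-parameter groups: the composition (base) part must itself be a one-parameter group of measure-class-preserving transformations, and differentiating, the generator $h$ acts as multiplication by a real function plus a derivation coming from the flow; but a bounded derivation of this type on an $L^p$-space, together with boundedness of $h$, should force the flow part to be trivial, so $h$ is multiplication by a bounded real function. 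Two such multiplication operators commute, giving $hk = kh$; since $\mathrm{core}(A)$ is generated by its self-adjoint elements, it is abelian. The delicate technical issues will be: (i) handling the measurable/$\sigma$-finiteness bookkeeping for the $L^p$-space (one may reduce to the case where the representation is nondegenerate on a standard measure space, or pass to the "support" projection); (ii) making the differentiation argument rigorous for possibly unbounded generators of the flow even though $h$ itself is bounded — the cleanest route may be to avoid differentiation entirely and instead argue directly that a bounded group of Lamperti isometries $\{\exp(ith)\}$ with $\|\exp(ith)\|=1$ and $h$ self-adjoint in a $C^*$-sense must have trivial composition part, perhaps by looking at the fixed-point structure and using that $h$ has real spectrum with $\mathrm{sp}(h) = \mathrm{sp}(\exp(ih))/i$-type relations; (iii) ensuring the generated subalgebra $C$ above really is a $C^*$-algebra and not merely an operator algebra, which again reduces to the abelian rigidity. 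In the write-up I would isolate the abelian-rigidity statement for $C^*$-subalgebras as a lemma (''every $C^*$-algebra isometrically representable on an $L^p$-space, $p\neq 2$, is commutative''), prove it via Lamperti, and then derive both the existence of a maximal $C^*$-subalgebra and its commutativity as formal consequences.
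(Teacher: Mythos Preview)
Your core idea is the same as the paper's: both hinge on showing, via Lamperti's description of isometries of $L^p$ for $p\neq 2$, that every hermitian element of $A$ acts as multiplication by a real $L^\infty$-function, whence hermitian elements commute. The paper's route to this is essentially your ``cleanest route'' in worry~(ii): for hermitian $a$ with $\|a\|\leq\pi/2$, the one-parameter isometry group $t\mapsto\exp(ita)$ is \emph{norm}-continuous, and the Lamperti decomposition $u_t=m_{h_t}v_{\varphi_t}$ together with an explicit norm formula forces $t\mapsto\varphi_t$ to be locally constant, hence $\varphi_t=\mathrm{id}$; then $a=-i\log(m_{h_1})$ via holomorphic functional calculus. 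No differentiation or flow analysis is needed.

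Where the paper is more efficient is in the \emph{existence} step. You propose to build $\core(A)$ by showing that the algebra generated by two $C^*$-subalgebras is again a $C^*$-subalgebra, which is delicate (and false in general Banach algebras). The paper bypasses this entirely with a short abstract observation: in any unital Banach algebra, if $A_{\mathrm{h}}$ is closed under multiplication then $A_{\mathrm{h}}+iA_{\mathrm{h}}$ is already a (commutative) subalgebra and automatically the largest $C^*$-subalgebra, since any $C^*$-subalgebra $B$ satisfies $B=B_{\mathrm{h}}+iB_{\mathrm{h}}\subseteq A_{\mathrm{h}}+iA_{\mathrm{h}}$. The commutativity comes for free from the standard fact that $i(ab-ba)\in A_{\mathrm{h}}$ for $a,b\in A_{\mathrm{h}}$, combined with $A_{\mathrm{h}}\cap iA_{\mathrm{h}}=\{0\}$. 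So once you know hermitian operators on $L^p(\mu)$ are multiplication operators (hence multiply), everything follows at once---your join argument and your isolated lemma ``$C^*$-subalgebras of $\B(L^p)$ are abelian'' are both consequences rather than ingredients. For worry~(i), the paper reduces to a \emph{localizable} measure algebra (so that Lamperti applies in full generality), and for $p=2$ it simply observes $\core(A)=A\cap A^*$ inside $\B(L^2)$, a case your sketch leaves implicit.
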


Theorem~\ref{thmintro:C*core} can be interpreted as follows:
while a given unital $L^p$-operator algebra ($p\neq 2$) in general has many non-isomorphic maximal abelian subalgebras,
it has a \emph{unique} one that is isometrically of the form $C(X)$.
In particular, any isometric isomorphism
must preserve the $C^*$-cores. This is a dramatic
difference with \ca s, where even 
Cartan subalgebras are not unique. On the downside, 
the $C^*$-core can sometimes be too small to be of 
any use (see \autoref{exa:GroupoidGrpAlgs}).
For an $L^p$-operator algebra obtained from either a combinatorial object or a dynamical system, 
the $C^*$-core can often be computed in terms of the underlying data (see \autoref{thm:GroupoidRigidity}). This is a very useful tool that allows us to retrieve information about the combinatorics/dynamics from the associated algebra, which is best seen in the case of topologically free actions:

\begin{thmintro}[See \autoref{thm:RigidityDynSysts}]
\label{thmintro:CPrigidity}
Let $p\in [1,\infty)\setminus\{2\}$, let $G$ and $H$ be discrete groups, let $X$ and $Y$ be compact
Hausdorff spaces, and let $G\curvearrowright X$ and $H\curvearrowright Y$ be topologically free actions.
Then $G\curvearrowright X$ and $H\curvearrowright Y$ are continuously orbit equivalent if and only if
there is an isometric isomorphism $F^p_\lambda(G,X)\cong F^p_\lambda(H,Y)$.
\end{thmintro}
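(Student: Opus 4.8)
The plan is to exploit Theorem~\ref{thmintro:C*core} to extract the dynamical data from the Banach-algebraic structure of the reduced $L^p$-crossed product $F^p_\lambda(G,X)$. For the ``only if'' direction, the key observation is that the $C^*$-core of $F^p_\lambda(G,X)$ should turn out to be exactly $C(X)$, the canonical copy of the coefficient algebra. Indeed, $C(X)$ is a unital $C^*$-subalgebra, so it sits inside $\core(F^p_\lambda(G,X))$; for the reverse inclusion one uses topological freeness of $G \curvearrowright X$ together with the fact that $\core(F^p_\lambda(G,X))$ is abelian (by Theorem~\ref{thmintro:C*core}, since $p \neq 2$) and must commute with $C(X)$, hence lie in the relative commutant of $C(X)$; topological freeness forces that relative commutant to be $C(X)$ itself. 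Granting this, an isometric isomorphism $F^p_\lambda(G,X) \cong F^p_\lambda(H,Y)$ restricts to an isometric $C^*$-isomorphism of the cores, i.e.\ $C(X) \cong C(Y)$, hence a homeomorphism $X \cong Y$ by Gelfand duality. One then has to upgrade this to a continuous orbit equivalence: the isomorphism must intertwine the canonical conditional-expectation-type maps onto the cores, and tracking how the isometric copies of the group elements $u_g$ act on the core (they normalize it, inducing partial homeomorphisms of $X$) produces the continuous cocycle witnessing continuous orbit equivalence. This is essentially the $L^p$-analog of the argument that, in the $C^*$-setting, uses the diagonal $C(X)$ as a Cartan subalgebra together with the reconstruction of the Weyl groupoid.

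For the ``if'' direction, one assumes a continuous orbit equivalence between $G \curvearrowright X$ and $H \curvearrowright Y$ and must build an isometric isomorphism of the crossed products. The natural route is through the groupoid picture: a continuous orbit equivalence induces an isomorphism of the transformation groupoids $X \rtimes G$ and $Y \rtimes H$ (this is the standard groupoid reformulation of continuous orbit equivalence, and should hold verbatim here since it is purely topological/dynamical), and then one invokes functoriality of the $L^p$-groupoid-algebra construction: isomorphic étale groupoids yield isometrically isomorphic reduced $L^p$-operator algebras, with $F^p_\lambda(G,X)$ identified with the reduced $L^p$-groupoid algebra of $X \rtimes G$. The groupoid $\calG_A$ attached to $A$ in the abstract is precisely the device that makes this bookkeeping clean, so in practice both directions are mediated by the identification $\calG_{F^p_\lambda(G,X)} \cong X \rtimes G$, which presumably is one of the example computations referenced via \autoref{thm:GroupoidRigidigy}.

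The main obstacle I expect is the core computation $\core(F^p_\lambda(G,X)) = C(X)$, and specifically the relative-commutant step: showing that any element of $F^p_\lambda(G,X)$ commuting with all of $C(X)$ must be supported on the unit space. In the $C^*$-world this follows from the existence of a faithful conditional expectation onto $C(X)$ and an averaging/approximation argument using topological freeness; for $p \neq 2$ one lacks the Hilbert-space geometry, so one must instead rely on the $L^p$-spatial structure — the natural ``Fourier coefficient'' maps $a \mapsto a_g$ coming from the regular representation — and verify they behave well enough (contractivity, faithfulness of $a \mapsto a_e$) to run the same argument. A secondary subtlety is checking that an \emph{isometric} isomorphism, not merely a bounded one, is what the core machinery delivers and requires; but since Theorem~\ref{thmintro:C*core} is about $C^*$-subalgebras with their intrinsic norm, and an isometric algebra isomorphism automatically restricts to an isometric $C^*$-isomorphism of the cores, this should be a formality once the cores are correctly identified.
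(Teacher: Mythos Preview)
Your high-level strategy matches the paper's: both directions are mediated by the identification $\calG_{F^p_\lambda(G,X)}\cong G\ltimes X$ (the paper's \autoref{thm:GroupoidRigidigy}), together with the standard fact that continuous orbit equivalence of topologically free systems is the same as isomorphism of the transformation groupoids (\autoref{lma:COEGroupoid}). The ``if'' direction you sketch is exactly what the paper does.

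The one substantive difference is in the core computation $\core(F^p_\lambda(G,X))=C(X)$. You propose to argue via the relative commutant: since the core is abelian and contains $C(X)$, it lies in $C(X)'\cap F^p_\lambda(G,X)$, and topological freeness forces this to be $C(X)$. This can be made to work using the Fourier-coefficient maps you mention, but the paper takes a shorter and more robust route (\autoref{prp:coreGpd} and, for crossed products directly, \autoref{prp:coreCrossedProd}) that \emph{does not use topological freeness at all}. The point is that for each $x\in\calGZero$ the regular representation $\pi_x$ is unital and contractive, so it sends the core into $\core(\B(\ell^p(\calG x)))=\ell^\infty(\calG x)$; hence $\pi_x(a)\delta_x$ is a scalar multiple of $\delta_x$, which forces $j_a$ to be supported on $\calGZero$. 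This bypasses the relative-commutant obstacle you flagged and shows that the core identification holds for \emph{every} Hausdorff \'etale groupoid with compact unit space, not just topologically principal ones. Topological freeness enters only later, in the reconstruction step (\autoref{lma:partHomeoOfBisection}), where one must show that every admissible pair in $F^p_\lambda(\calG)$ comes from an open bisection---this is where your informal ``tracking normalizers'' is made precise, and it is the genuinely delicate part.
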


In other words, for $p\neq 2$, the $L^p$-crossed product of a topologically free action \emph{remembers}
the continuous orbit equivalence class of the given action, and hence also the quasi-isometry class of the acting group.
Again, this shows how much more rigid the case $p\neq 2$ is in comparison with $p=2$. For the sake of comparison, other rigidity phenomena, this time in the context of coarse geometry and uniform Roe algebras, have 
been obtained in \cite{ChuLi18RigidityLpRoe, BraVig20RoeAlgAsBAlg}.

As a further application of our methods, we show that there is no $L^p$-analog of Elliott's isomorphism theorem $\Ot\otimes\Ot\cong \Ot$.
More concretely:

\begin{thmintro}[See \autoref{prp:O2O2Noniso}]
\label{thmintro:TensProdCtzAlgs}
Let $p\in [1,\infty)\setminus\{2\}$, and let $m,n\in\NN$.
Then there is an isometric isomorphism
\[
\underbrace{\Otp\otimes^p\cdots\otimes^p\Otp}_{m}
\cong \underbrace{\Otp\otimes^p\cdots\otimes^p\Otp}_{n}
\]
if and only if $m=n$.
\end{thmintro}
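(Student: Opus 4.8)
The plan is to reduce the statement to the groupoid rigidity machinery already developed in the paper (\autoref{thm:GroupoidRigidigy}) by computing the \'etale groupoid $\calG_A$ associated to a tensor power of $\Otp$. First I would recall that $\Otp$ is built, via Phillips's construction, as an $L^p$-operator crossed product of an essentially free action on a Cantor set, so that its $C^*$-core is the canonical abelian Cartan-like subalgebra $C(X_2)$, where $X_2$ is the underlying Cantor set (the tight unit space of the Cuntz groupoid of $\Ot$). The key point is that the $C^*$-core behaves well under the $L^p$-tensor product $\otimes_p$: I would establish that $\core(A\otimes_p B)=\core(A)\otimes\core(B)$ (as $C^*$-algebras, with the minimal $C^*$-tensor product), at least when the cores are abelian, so that $\core(\Otp^{\otimes_p n})=C(X_2)^{\otimes n}=C(X_2^n)$. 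Dually, this says that the unit space of $\calG_{\Otp^{\otimes_p n}}$ is the $n$-fold product $X_2^n$, and the groupoid itself is the $n$-fold product $\calG_2\times\cdots\times\calG_2$ of the Cuntz groupoid $\calG_2$ of $\Ot$ (the compatibility of $\calG_{(-)}$ with products being the other technical ingredient I would need).

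Granting this computation, an isometric (or merely contractive) isomorphism $\Otp^{\otimes_p n}\cong\Otp^{\otimes_p m}$ would force, by the functoriality half of \autoref{thm:GroupoidRigidigy}, an isomorphism of topological groupoids $\calG_2^n\cong\calG_2^m$, and in particular a homeomorphism of unit spaces $X_2^n\cong X_2^m$. Since $X_2$ is an infinite compact metrizable space, $X_2^n\cong X_2^m$ holds for all $n,m\ge 1$ (the Cantor set absorbs finite products), so the homeomorphism of unit spaces alone does \emph{not} distinguish the powers, and I would instead extract a numerical invariant from the groupoid structure. The natural one is a $K$-theoretic or homological invariant: $\calG_2$ has groupoid homology (equivalently, the $K$-theory of $\Ot$) equal to $K_0=0$, $K_1=0$ after the usual normalization $K_0(\Ot)=\Z/1=0$; more usefully, $K_0(\Otp)\cong K_0(\Ot)=0$ and $K_1(\Otp)=0$ as well, so $K$-theory is trivial and cannot separate the tensor powers either. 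This is the genuine obstacle, and it is exactly why the rigidity here is so much sharper than in the $C^*$-world: one must use a finer invariant that is \emph{not} a stable/$K$-theoretic one.

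The resolution I would pursue is to count the \emph{dimension} of the unit space in a groupoid-intrinsic way, or better, to use the structure of the isotropy-free part: since the action defining $\Otp$ is essentially free, $\calG_2$ is topologically principal, and $\calG_2^n$ is a principal-\`a-la-Cartan groupoid whose ``rank'' (e.g.\ the topological dimension of a suitable transversal, or the number of tensor factors recovered from the lattice of open subgroupoids fixing coordinate projections) equals $n$. Concretely, I would identify $n$ as the length of the longest chain of proper inclusions of ``coordinate sub-cores'' inside $\core(\Otp^{\otimes_p n})=C(X_2^n)$ that arise as ranges of the conditional-expectation-type maps coming from the tensor-factor embeddings $\Otp^{\otimes_p k}\hookrightarrow\Otp^{\otimes_p n}$; an isometric isomorphism must carry $C^*$-core to $C^*$-core (by maximality/uniqueness, \autoref{thm:C*core}) and must respect this canonical chain structure, yielding $n=m$. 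The hard part will be making ``the tensor-factor structure is canonically visible inside the $L^p$-algebra'' precise --- i.e.\ showing that the sub-$C^*$-algebras $C(X_2^k)$ sitting inside $C(X_2^n)$ are distinguished among all unital $C^*$-subalgebras by an internal property preserved under isometric isomorphism --- and checking that contractive (not necessarily isometric) isomorphisms are automatically isometric in this setting, or else running the whole argument with the weaker hypothesis; once that is in place, the reverse implication ``$n=m\Rightarrow$ isomorphic'' is trivial.
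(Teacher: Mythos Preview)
Your opening moves are sound: reducing to the Weyl groupoid via \autoref{thm:GroupoidRigidigy}, and recognising that neither the homeomorphism type of the unit space nor $K$-theory can separate the tensor powers. But the proposal breaks down precisely at the step you flag as ``the hard part''. The chain of ``coordinate sub-cores'' $C(X_2^k)\subseteq C(X_2^n)$ is \emph{not} an intrinsic invariant: an isometric isomorphism must carry $\core$ to $\core$ by \autoref{thm:C*core}, but $\core((\Otp)^{\otimes_p n})$ is just $C(\text{Cantor})$, and there is no internal property singling out the particular sub-$C^*$-algebras coming from tensor factors among the continuum of other copies of $C(\text{Cantor})$ inside it. Likewise ``dimension of a transversal'' or ``rank of the groupoid'' are not made precise, and it is unclear what they would mean for a product of Cuntz groupoids, all of whose unit spaces are zero-dimensional. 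In short, you have correctly reduced the problem to distinguishing the \'etale groupoids $\calG_2^{\,n}$ up to isomorphism, but you have no working invariant that does so.

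The paper resolves this by a completely different route that you do not mention. Using \autoref{prop:OnpMatrices} one replaces $\Otp$ by $M_2^p\otimes_p\Otp$, and then \autoref{thm:CtzAlgsCrossedProd} (a Spielberg-type construction) realises $M_2^p\otimes_p\Otp$ as a reduced crossed product $F^p_\lambda(\Z_2\ast\Z_3,X)$ for a topologically free action on the Cantor set. Via \autoref{lma:TensProdCrossProd}, the $n$-fold tensor power becomes the crossed product by the product action of $(\Z_2\ast\Z_3)^n$. Now the rigidity theorem \autoref{thm:RigidityDynSysts} converts an isometric isomorphism into a continuous orbit equivalence, and a result of Medynets--Sauer--Thom turns continuous orbit equivalence of free Cantor actions of non-amenable groups into bi-Lipschitz (hence quasi-isometry) equivalence of the acting groups. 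Finally, \autoref{lma:NonCOE} distinguishes $(\Z_2\ast\Z_3)^n$ from $(\Z_2\ast\Z_3)^m$ by computing the covering dimension of their asymptotic cones (which is exactly $n$, respectively $m$). The missing idea in your proposal is this passage through geometric group theory: the invariant that actually separates the tensor powers is the quasi-isometry class of an acting group, extracted via the crossed-product picture, not any lattice of subalgebras of the core.
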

%
As a consequence, we answer a question of Phillips: there is no isometric isomorphism between
$\Otp$ and $\Otp\otimes^p\Otp$ for $p\neq 2$, although they are both
simple, purely infinite, amenable $L^p$-operator algebras with identical $K$-theory (see \autoref{prop:KtheoryTheSame}).
In particular, $K$-theory is not a fine enough invariant to distinguish
between simple, purely infinite, amenable
$L^p$-operator algebras, when $p\neq 2$, in contrast to the celebrated Kirchberg--Phillips classification of simple,
purely infinite, amenable \ca{s} \cite{Phi00ClassNuclPISimple}.

Our methods are very general and thus ought to provide useful information in many other contexts,
since the existence of $C^*$-cores does not assume that the $L^p$-operator algebra is constructed
from any combinatorial object. 

\subsection*{Acknowledgements}
This paper grew out of discussions held during a visit of the first named
author to the other two at the Mathematical Institute at the University of 
M\"unster in April 2018.
He thanks the members of the Institute for their hospitality during this visit.
The second named author wishes to thank Michal Doucha for very valuable email correspondence on some topics of geometric group theory, and David Kerr for pointing out 
the reference \cite{Ram82TopMsrdGrps} to us.

\section{\texorpdfstring{$C^*$}{C*}-cores in \texorpdfstring{$L^p$}{Lp}-operator algebras}

Let $A$ be a unital Banach algebra.
(We only consider complex Banach algebras.)
Recall that an element $a$ in $A$ is said to be \emph{hermitian} if $\|e^{ita}\|=1$ for all $t\in\mathbb{R}$.
We use $A_{\mathrm{h}}$ to denote the set of hermitian elements in $A$, which is a closed, real linear subspace of $A$ satisfying 
$A_{\mathrm{h}}\cap iA_{\mathrm{h}}=\{0\}$;
see~\cite[Section~5]{BonDun71NumRanges} for details.

If $A$ is a unital \ca{}, then $A_{\mathrm{h}}$ consists precisely of the self-adjoint elements in $A$.
It follows that $A=A_{\mathrm{h}}+iA_{\mathrm{h}}$.
The Vidav--Palmer theorem, \cite[Theorem~6.9]{BonDun71NumRanges}, shows that the converse also holds. More precisely, if $A$ is a unital Banach algebra satisfying $A=A_{\mathrm{h}}+iA_{\mathrm{h}}$, then 
the real-linear involution given by $x+iy\mapsto x-iy$ for $x,y\in A_{\mathrm{h}}$ 
is both isometric and an algebra involution which satisfies the 
$C^*$-identity (namely $\|a^*a\|=\|a\|^2$ for all $a\in A$).
These observations justify the following terminology.

\begin{dfn}
Let $A$ be a unital Banach algebra, and let $B\subseteq A$ be a unital, closed subalgebra.
We say that $B$ is a \emph{unital \casub{} of $A$} if $B=B_{\mathrm{h}}+iB_{\mathrm{h}}$.
\end{dfn}

The following result is standard, and will be needed later.

\begin{lma}
\label{prp:HermSubalg}
Let $A$ be a unital Banach algebra, and let $B\subseteq A$ be a unital, closed subalgebra.
Then $B_{\mathrm{h}}=B\cap A_{\mathrm{h}}$.
In particular, if $A_{\mathrm{h}}$ is closed under multiplication, then so is $B_{\mathrm{h}}$.
\end{lma}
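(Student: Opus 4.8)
The plan is to observe that being hermitian is an \emph{intrinsic} property of a closed unital subalgebra, so that the only content of the lemma is bookkeeping about which norm and which unit one uses when forming exponentials. I expect no real obstacle here; the one point demanding care is the identification of exponentials computed in $B$ versus in $A$.

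First I would record that, by definition, $B$ carries the norm inherited from $A$ and that $B$ and $A$ share the same unit. Consequently, for every $b\in B$ and $t\in\mathbb{R}$, the partial sums of the series $\exp(itb)=\sum_{n=0}^{\infty}\tfrac{(itb)^n}{n!}$ already lie in $B$; since $B$ is norm-closed, the sum lies in $B$ and agrees with the element $\exp(itb)$ formed in $A$. Because the norm on $B$ is the restriction of the norm on $A$, this gives $\|\exp(itb)\|_{B}=\|\exp(itb)\|_{A}$ for all $t$. Hence $b\in B_{\mathrm{h}}$, i.e.\ $\|\exp(itb)\|_{B}=1$ for all $t\in\mathbb{R}$, holds if and only if $b\in B$ and $\|\exp(itb)\|_{A}=1$ for all $t\in\mathbb{R}$, i.e.\ if and only if $b\in B\cap A_{\mathrm{h}}$. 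This proves $B_{\mathrm{h}}=B\cap A_{\mathrm{h}}$.

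For the ``in particular'' clause, suppose $A_{\mathrm{h}}$ is closed under multiplication, and let $a,b\in B_{\mathrm{h}}$. By the first part, $a,b\in B\cap A_{\mathrm{h}}$; then $ab\in B$ since $B$ is a subalgebra, and $ab\in A_{\mathrm{h}}$ by hypothesis, so $ab\in B\cap A_{\mathrm{h}}=B_{\mathrm{h}}$. The only subtlety worth flagging is precisely the one handled in the second paragraph: one must make sure ``hermitian in $B$'' is computed with the inherited norm and the ambient unit, which is exactly what prevents any discrepancy between the numerical-range data of $b$ as an element of $B$ and as an element of $A$; once this is pinned down, both assertions are immediate.
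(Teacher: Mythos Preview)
Your proof is correct. The paper does not actually prove this lemma; it is stated as a standard fact without proof, so there is nothing to compare against beyond noting that your argument is the expected one.
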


Let $A$ be a unital Banach algebra.
In general, $A_{\mathrm{h}}+iA_{\mathrm{h}}$ is not a subalgebra, since
it is not necessarily closed under multiplication. However, if this is the case, then it follows from \autoref{prp:HermSubalg} that it is the largest unital \casub{} of $A$.
When $A_{\mathrm{h}}$ is itself closed under multiplication, we can say
even more.

\begin{prp}
\label{prp:largestSubCa}
Let $A$ be a unital Banach algebra.
Assume that $A_{\mathrm{h}}$ is closed under multiplication. 
Then $A_{\mathrm{h}}+iA_{\mathrm{h}}$ is a commutative, unital \casub{} of $A$.
Moreover, if $C\subseteq A$ is a unital \casub{}, then $C\subseteq A_{\mathrm{h}}+iA_{\mathrm{h}}$.
\end{prp}
\begin{proof}
Since $A_{\mathrm{h}}$ is closed under multiplication, elementary
algebra shows that the subspace $D=A_{\mathrm{h}}+iA_{\mathrm{h}}$ is
also closed under multiplication, and is thus a subalgebra of $A$.
Hence $D$ is the largest unital \casub{} of $B$.

We now show that $D$ is commutative. Given $a,b\in A_{\mathrm{h}}$, the 
element $i(ab-ba)$ is also hermitian by Lemma~5.4 in~\cite{BonDun71NumRanges}. Therefore, since $A_{\mathrm{h}}$ is a $\mathbb{R}$-linear
subspace and is closed under multiplication, $ab-ba$ belongs
to $A_{\mathrm{h}}\cap iA_{\mathrm{h}}=\{0\}$.
Thus $ab=ba$ for all $a,b\in A_{\mathrm{h}}$, and the result follows.
\end{proof}

It is well-known that hermitian elements are preserved by unital, contractive homomorphisms.
The next result, which is probably well-known but which we could not find in the literature, shows that multiplicativity of the map is not needed.
It shows in particular that conditional expectations onto unital subalgebras preserve hermitian elements;
see \autoref{prp:coreCondExp}.

\begin{lma}
\label{prp:HermUnitalMaps}
Let $A$ be a unital Banach algebra, let $B$ be a unital Banach algebra, and let $\varphi\colon A\to B$ be a unital, contractive linear map.
Then $\varphi(A_{\mathrm{h}})\subseteq B_{\mathrm{h}}$.
\end{lma}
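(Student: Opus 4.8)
The plan is to sidestep the exponential definition of hermitian elements---which is awkward here because $\varphi$ need not be multiplicative---and to argue instead with the algebra numerical range, whose behaviour under $\varphi$ uses only that $\varphi$ is unital and contractive. Recall (see \cite[Section~5]{BonDun71NumRanges}) that for a unital Banach algebra $A$ with unit of norm one, and $c\in A$, one has $c\in A_{\mathrm{h}}$ if and only if the numerical range
\[
V_A(c) = \{\, f(c) : f\in A',\ \|f\|=f(1_A)=1 \,\}
\]
is contained in $\mathbb{R}$; here $A'$ denotes the continuous dual of $A$, and the functionals $f$ with $\|f\|=f(1_A)=1$ are the (normalized) \emph{states} of $A$.

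First I would check that precomposition with $\varphi$ carries states of $B$ to states of $A$. If $g$ is a state of $B$, then $g\circ\varphi\in A'$ satisfies $\|g\circ\varphi\|\le\|g\|\,\|\varphi\|\le 1$, while $(g\circ\varphi)(1_A)=g(\varphi(1_A))=g(1_B)=1$ since $\varphi$ is unital; as $\|1_A\|=1$ this forces $\|g\circ\varphi\|=1$, so $g\circ\varphi$ is a state of $A$. Consequently, for every $a\in A$, any point $g(\varphi(a))$ of $V_B(\varphi(a))$ equals $(g\circ\varphi)(a)$ and therefore lies in $V_A(a)$; that is,
\[
V_B(\varphi(a))\subseteq V_A(a)\qquad\text{for all }a\in A.
\]
Now the conclusion is immediate: if $a\in A_{\mathrm{h}}$ then $V_A(a)\subseteq\mathbb{R}$, hence $V_B(\varphi(a))\subseteq\mathbb{R}$, and so $\varphi(a)\in B_{\mathrm{h}}$ by the characterization above.

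I do not expect a genuine obstacle; the only point that needs care is to have the right description of $A_{\mathrm{h}}$ at hand. The equivalence ``$c$ hermitian $\iff V_A(c)\subseteq\mathbb{R}$'' is standard \cite[Section~5]{BonDun71NumRanges}---it can be unwound, if one wants, from the identity $\max\{\operatorname{Re}\lambda:\lambda\in V_A(c)\}=\lim_{t\to 0^+}t^{-1}(\|1_A+tc\|-1)=\lim_{t\to 0^+}t^{-1}(\|\exp(tc)\|-1)$ applied to $\pm ic$, together with convexity of $t\mapsto\|1_A+tc\|$. Once that is in place, the argument uses nothing about $\varphi$ beyond being unital and norm-decreasing; in particular multiplicativity is never invoked, which is precisely the content of the lemma.
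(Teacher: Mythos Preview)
Your proof is correct and follows essentially the same approach as the paper: both use the numerical-range characterization of hermitian elements and the observation that precomposition with a unital contractive map sends states of $B$ to states of $A$, yielding $V_B(\varphi(a))\subseteq V_A(a)$. The paper's argument is slightly terser but otherwise identical.
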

\begin{proof}
Given a unital Banach algebra $C$ and $x\in C$, recall that the numerical range of $x$ with respect to $C$ is defined as
\[
V(C,x) = \big\{ f(x)\colon f\in C^*, f(1_C)=1=\|f\| \big\}.
\]
We will use the standard fact that $x\in C$ is hermitian if and only if $V(C,x)\subseteq\RR$.

Let $a\in A_{\mathrm{h}}$.
Let $f\in B^*$ satisfy $f(1_B)=1=\|f\|$, and set $\bar{f}=f\circ\varphi$.
Since $\varphi$ is unital, we have $\bar{f}(1_A)=1$.
Since $\varphi$ is contractive, we have $\|\bar{f}\|\leq 1$ and thus $\|\bar{f}\|=1$.
Then
\[
f(\varphi(a))=\bar{f}(a)\in V(A,a)\subseteq\RR,
\]
and consequently $V(B,\varphi(a))\subseteq\RR$, which implies that $\varphi(a)$ is hermitian.
\end{proof}

Our next step is to describe all hermitian operators on an $L^p$-space for $p\neq 2$; 
see~\autoref{eg:HermBLp}.
Although it would suffice for many concrete examples to only consider $\ell^p$ and $L^p[0,1]$, the proofs are no harder for general $L^p$-spaces, and the extra generality may be useful for future work, as ultraproduct arguments often lead to representations on ``large'' $L^p$-spaces.

To formulate the precise result, we first recall some notions from measure theory.
Recall that a \emph{measure algebra} $(\frakA,\mu)$ is a $\sigma$-complete Boolean algebra $\frakA$ 
together with a $\sigma$-additive map $\mu\colon\frakA\to[0,\infty]$ that satisfies $\mu^{-1}(0)=\{0\}$;
see \cite[Definition~321A, p.68]{Fre-MsrThy3a}.
Given a measure space $(X,\Sigma,\mu)$, the family of null-sets $\mathcal{N}=\{E\in\Sigma\colon \mu(E)=0\}$ 
is a $\sigma$-ideal in $\Sigma$, and $\frakA=\Sigma/\mathcal{N}$ is a $\sigma$-complete Boolean algebra.
Further, the measure $\mu$ induces a map $\bar{\mu}\colon\frakA\to[0,\infty]$ given by $\bar{\mu}(E+\mathcal{N})=\mu(E)$ 
for all $E\in \Sigma$. Moreover, $(\frakA,\bar{\mu})$ is a measure algebra, called the measure algebra associated to $(X,\Sigma,\mu)$;
see \cite[321H, p.69f]{Fre-MsrThy3a}.

There are natural notions of measurable and integrable functions on a measure algebra $(\frakA,\mu)$, and one obtains $L^p$-spaces $L^p(\frakA,\mu)$ for every $p\in[1,\infty]$.
If $(\frakA,\bar{\mu})$ is the measure algebra associated to a measure space $(X,\Sigma,\mu)$, then there are natural isometric isomorphisms $L^p(\frakA,\bar{\mu})\cong L^p(X,\Sigma,\mu)$ for each $p\in[1,\infty]$;
see Corollary~363I 
and Theorem~366B 
in \cite{Fre-MsrThy3b}.

A measure algebra $(\frakA,\mu)$ is said to be \emph{semi-finite} if for every $E\in \frakA$ with $\mu(E)=\infty$ there exists a nonzero $E'\leq E$ with $\mu(E')<\infty$.
It is said to be \emph{localizable} if it is semi-finite and $\frakA$ is a complete lattice;
see \cite[Definitions~322A]{Fre-MsrThy3a}.

\begin{rmk}
Localizable measure algebras form the largest class of measure algebras where the Radon-Nikodym theorem is applicable.
Importantly for us, Lamperti's description of the
invertible isometries of an $L^p$-space for $p\neq 2$ from \cite{Lam58IsoLp}, which was originally 
proved only for $\sigma$-finite spaces, remains valid in the more general context of localizable measure algebras;
see Section~3 in~\cite{GarThi22IsoConv}.
\end{rmk}

Given a measure algebra $(\frakA,\mu)$, there is a canonical way to associate to it a semi-finite measure algebra, which can then be Dedekind-completed to obtain a localizable measure algebra.
Both operations identify the associated $L^p$-spaces for $p\in[1,\infty)$ (but not necessarily for $p=\infty$);
see \cite[322P, 322X(a), p.78f]{Fre-MsrThy3a} and \cite[365X(o), 366X(e), p.129, p.139]{Fre-MsrThy3b}.
In particular, we deduce the following:

\begin{prp}
\label{prp:RealizeLpByLocalizable}
Let $(X,\Sigma,\mu)$ be a measure space.
Then there is a (naturally associated) localizable measure algebra $(\frakA,\bar{\mu})$ such that $L^p(X,\Sigma,\mu)$ is isometrically isomorphic to $L^p(\frakA,\bar{\mu})$ for every $p\in[1,\infty)$.
\end{prp}

By \cite[Theorem~322B, p.72]{Fre-MsrThy3a}, the measure algebra associated to a measure space is localizable if and only if the measure space is localizable (in the sense of \cite[Definition~211G, p.13]{Fre-MsrThy2}).
Since every measure algebra is realized by some measure space, we also deduce that for every measure space $\mu$ there exists a localizable measure space $\bar{\mu}$ such that $L^p(\mu)\cong L^p(\bar{\mu})$ for every $p\in[1,\infty)$.

The following result is probably known, but we could only locate it in the literature for the case that the measure space is atomic (\cite[Theorem~2]{Tam69IsomFctnSp}), or $\sigma$-finite (see for example \cite[Lemma~5.2]{GarThi20ExtendingRepr}).
We include here a proof in the general case for the convenience of the reader. 

\begin{prp}
\label{eg:HermBLp}
Let $p\in [1,\infty)\setminus\{2\}$, let $(\frakA,\mu)$ be a localizable measure algebra, and let $a\in\Bdd(L^p(\mu))$.
Then $a$ is hermitian if and only if there exists $h\in L^\infty_{\RR}(\mu)$ such that $a$ is the multiplication operator 
associated to $h$. 
\end{prp}
\begin{proof}
Given $f\in L^\infty(\mu)$, let $m_f\in\Bdd(L^p(\mu))$ denote the associated multiplication operator.
The resulting map $m\colon L^\infty(\mu)\to\Bdd(L^p(\mu))$ is unital and contractive
(and in fact isometric, since $(\frakA,\mu)$ is localizable),
which implies that it preserves hermitian elements;
see \autoref{prp:HermUnitalMaps}.
We have $L^\infty(\mu)_{\mathrm{h}}= L^\infty_{\RR}(\mu)$, and thus every function $f\in L^\infty_{\RR}(\mu)$ defines a hermitian multiplication operator $m_f$.

Conversely, assume that $a$ is hermitian.
We may assume that $a\neq 0$; by rescaling if necessary, we may also assume that $\|a\|\leq\tfrac{\pi}{2}$.
Then, for $t\in\mathbb{R}$, set $u_t=e^{ita}\in \Bdd(L^p(\mu))$.
Then $\|u_t\|\leq 1$, since $a$ is hermitian.
Moreover, $u_tu_{-t}=u_{-t}u_t=\id_{L^p(\mu)}$, which implies that $u_t\colon L^p(\mu)\to L^p(\mu)$ is a surjective isometry.
Moreover, the resulting map $[0,1]\to\mathrm{Isom}(L^p(\mu))$ into the group of surjective isometries, given by $t\mapsto u_t$, 
is norm-continuous.

By Lamperti's theorem (in the form given in Theorem~3.7 in~\cite{GarThi22IsoConv};
see \cite{Lam58IsoLp} for the original statement), for every $t\in [0,1]$ there exist a unique $h_t$ in the unitary group $\mathcal{U}(L^\infty(\mu))$ of $L^\infty(\mu)$, 
and a unique Boolean automorphism $\Phi_t$ of $\frakA$ such that, in the notation of Lemma~3.3 of~\cite{GarThi22IsoConv}, we have $u_t=m_{h_t}\circ v_{\Phi_t}$.
By the norm computation in equation~(6) of~\cite{GarThi22IsoConv}, for $s,t\in [0,1]$ we have 
\[
\|u_t-u_s\| = \max \big\{ \|h_t-h_s\|_\infty, 2(1-\delta_{\Phi_t,\Phi_s}) \big\}.
\]
Since $t\mapsto u_t$ is norm-continuous, it follows that $\Phi_t=\Phi_s$ for all $t,s\in [0,1]$.
Since $\Phi_0$ is the identity automorphism, we deduce that $\Phi_t=\id_{\frakA}$ for all $t\in [0,1]$.
Hence, $\exp(ia)=u_1=m_{h_1}$.
Set 
\[
T= \big\{ it\colon t\in  [-\tfrac{\pi}{2},\tfrac{\pi}{2}] \big\} \ \mbox{ and } \ 
P= \big\{ z\in S^1\colon \mathrm{Re}(z)\geq 0 \big\},
\]
and note that the exponential map induces a bijection from $T$ to $P$.
We let $\log\colon P\to T$ denote the inverse of this map, which is analytic on a neighborhood of $P$.

Since $\|a\|\leq\tfrac{\pi}{2}$, the spectrum of $ia$ is contained in $T$.
Consequently, the spectrum of $u_1$ is contained in $P$. 
Applying analytic functional calculus to $u_1$ we get $ia=\log(u_1)$.
Since $m\colon L^\infty(\mu)\to\Bdd(L^p(\mu))$ is a unital homomorphism, we obtain
\[
a 
= -i\log(u_1)
= -i\log(m_{h_1})
=  m_{-i\log(h_1)}.
\]
Note that $-i\log(h_1)$ belongs to $L^\infty_{\RR}(\mu)$, which finishes the proof.
\end{proof}

\begin{cor}
\label{prp:HermBLpSubalg}
Let $p\in [1,\infty)\setminus\{2\}$, and let $(X,\Sigma,\mu)$ be any measure space.
Then $\Bdd(L^p(\mu))_{\mathrm{h}}$ is closed under multiplication.
\end{cor}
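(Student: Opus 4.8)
The plan is to reduce the general case to the localizable case already handled in \autoref{eg:HermBLp}. First I would apply \autoref{prp:RealizeLpByLocalizable}: given an arbitrary measure space $(X,\Sigma,\mu)$, there is a naturally associated localizable measure algebra $(\frakA,\bar{\mu})$ together with an isometric isomorphism $T\colon L^p(X,\Sigma,\mu)\to L^p(\frakA,\bar{\mu})$. Conjugation by $T$ then gives an isometric algebra isomorphism $\Phi\colon\B(L^p(\mu))\to\B(L^p(\frakA,\bar{\mu}))$, namely $\Phi(a)=TaT^{-1}$.

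Next I would observe that an isometric, unital algebra isomorphism preserves hermitian elements in both directions: if $a$ is hermitian then $\|\exp(it\Phi(a))\|=\|\Phi(\exp(ita))\|=\|\exp(ita)\|=1$ for all $t\in\RR$, so $\Phi(a)$ is hermitian, and the same argument applied to $\Phi^{-1}$ gives the converse. Hence $\Phi$ restricts to a real-linear isomorphism $\B(L^p(\mu))_{\mathrm{h}}\cong\B(L^p(\frakA,\bar{\mu}))_{\mathrm{h}}$. (Alternatively one can invoke \autoref{prp:HermUnitalMaps} applied to $\Phi$ and $\Phi^{-1}$, since both are unital contractions.)

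Then I would invoke \autoref{eg:HermBLp} for the localizable measure algebra $(\frakA,\bar{\mu})$: it identifies $\B(L^p(\frakA,\bar{\mu}))_{\mathrm{h}}$ with the set of multiplication operators $\{m_h\colon h\in L^\I_{\mathbb{R}}(\bar{\mu})\}$. This set is visibly closed under multiplication, since $m_h m_k=m_{hk}$ and $L^\I_{\mathbb{R}}(\bar{\mu})$ is closed under multiplication. Transporting back via the algebra isomorphism $\Phi$, it follows that $\B(L^p(\mu))_{\mathrm{h}}$ is closed under multiplication as well: if $a,b\in\B(L^p(\mu))_{\mathrm{h}}$, then $\Phi(a),\Phi(b)$ are hermitian, hence so is $\Phi(a)\Phi(b)=\Phi(ab)$, hence so is $ab$ by the reverse implication. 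I do not anticipate a genuine obstacle here; the only subtlety is making sure the identification $L^p(\mu)\cong L^p(\bar{\mu})$ from \autoref{prp:RealizeLpByLocalizable} is genuinely isometric (so that conjugation yields an \emph{isometric} algebra isomorphism and the hermitian-preservation argument goes through), which is exactly what that proposition provides.
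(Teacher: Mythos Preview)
Your proposal is correct and follows exactly the paper's approach: apply \autoref{prp:RealizeLpByLocalizable} to pass to a localizable measure algebra, observe that the resulting isometric isomorphism of $L^p$-spaces induces an isometric Banach algebra isomorphism between the algebras of bounded operators, and then invoke \autoref{eg:HermBLp}. You have simply spelled out the details (the conjugation map $\Phi$ and the hermitian-preservation argument) that the paper leaves implicit.
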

\begin{proof}
Apply \autoref{prp:RealizeLpByLocalizable} to obtain a localizable measure algebra $(\frakA,\bar{\mu})$ such that $L^p(\mu)\cong L^p(\bar{\mu})$.
Then $\Bdd(L^p(\mu))$ and $\Bdd(L^p(\bar{\mu}))$ are isometrically isomorphic as Banach algebras, and the result follows from \autoref{eg:HermBLp}.
\end{proof}

We have arrived at one of the main results of this section:
every unital 
$L^p$-operator algebra contains a largest \casub. 

\begin{thm}
\label{thm:C*core}
Let $p\in [1,\infty)$, and let $A$ be a unital $L^p$-operator algebra.
Set $\core(A):=A_{\mathrm{h}}+iA_{\mathrm{h}}$.
Then $\core(A)$ is the largest unital \casub{} of $A$. 
If $p\neq 2$, then $\core(A)$ is commutative.
\end{thm}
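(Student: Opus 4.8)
The plan is to reduce the statement to the concrete description of hermitian operators already established. Fix an isometric representation $A\subseteq\B(L^p(\mu))$ for some measure space $(X,\Sigma,\mu)$; by \autoref{prp:RealizeLpByLocalizable} we may and do assume $(\frakA,\mu)$ is a localizable measure algebra. The key input is \autoref{prp:HermSubalg}, which tells us that $A_{\mathrm{h}}=A\cap\B(L^p(\mu))_{\mathrm{h}}$. So the multiplicativity of $A_{\mathrm{h}}$ will follow once we know $\B(L^p(\mu))_{\mathrm{h}}$ is closed under multiplication.

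For $p\neq 2$, this is exactly \autoref{prp:HermBLpSubalg}: hermitian operators on $L^p(\mu)$ are precisely the real-valued multiplication operators, and these obviously form a commutative algebra. Combined with \autoref{prp:HermSubalg} this gives that $A_{\mathrm{h}}$ is closed under multiplication. Now \autoref{prp:largestSubCa} applies verbatim: it shows that $\core(A)=A_{\mathrm{h}}+iA_{\mathrm{h}}$ is a commutative, unital \casub{} of $A$, and that it contains every unital \casub{} of $A$, i.e.\ it is the largest one. This disposes of the $p\neq 2$ case, including commutativity.

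The case $p=2$ is separate because there $\B(L^2(\mu))_{\mathrm{h}}$ is the full space of self-adjoint operators, which is certainly not closed under multiplication, so \autoref{prp:largestSubCa} no longer applies directly. Here one argues instead as in the paragraph preceding \autoref{prp:largestSubCa}: the point is to show $A_{\mathrm{h}}+iA_{\mathrm{h}}$ is itself closed under multiplication. For $p=2$ one can use that $A_{\mathrm{h}}$ sits inside the self-adjoint part of $\B(L^2(\mu))=\B(\Hi)$, so the involution $a\mapsto a^*$ on $\B(\Hi)$ restricts to a well-defined conjugate-linear map on $D:=A_{\mathrm{h}}+iA_{\mathrm{h}}$ fixing $A_{\mathrm{h}}$; one checks $D$ is a closed subspace on which $*$ is isometric, and by the Vidav--Palmer theorem (invoked in the text) $\overline{D}$ — which equals $D$ — is a unital \casub{}. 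Since in a \casub{} $C$ the hermitian part is $C\cap A_{\mathrm{h}}$ by \autoref{prp:HermSubalg} and $C=C_{\mathrm{h}}+iC_{\mathrm{h}}\subseteq A_{\mathrm{h}}+iA_{\mathrm{h}}$, we again get that $D$ is the largest unital \casub. Alternatively, and perhaps cleanest, one can treat both cases uniformly by first passing to $\overline{A_{\mathrm{h}}+iA_{\mathrm{h}}}$ and using Vidav--Palmer plus \autoref{prp:HermSubalg} to see it is a \casub{}, hence closed under products, hence already equal to $A_{\mathrm{h}}+iA_{\mathrm{h}}$.

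The main obstacle, such as it is, is bookkeeping: making sure the reduction to a localizable measure algebra in \autoref{prp:RealizeLpByLocalizable} is applied before quoting \autoref{eg:HermBLp}, and being careful that the representation $A\hookrightarrow\B(L^p(\mu))$ is isometric so that hermitian elements are computed correctly (contractive would suffice for one inclusion via \autoref{prp:HermUnitalMaps}, but we want the exact description). There is no deep difficulty once \autoref{prp:HermBLpSubalg} and \autoref{prp:largestSubCa} are in hand; the theorem is essentially a packaging of those two results together with \autoref{prp:RealizeLpByLocalizable}.
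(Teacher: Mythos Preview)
Your $p\neq 2$ argument matches the paper's: combine \autoref{prp:HermBLpSubalg} with \autoref{prp:largestSubCa} (the intermediate appeal to \autoref{prp:HermSubalg} is implicit in the paper, explicit in yours). One small omission: you write ``fix an isometric representation $A\subseteq\B(L^p(\mu))$'' and then invoke \autoref{prp:HermSubalg}, but that lemma requires $A$ to share the unit of $\B(L^p(\mu))$. The definition of $L^p$-operator algebra does not guarantee the representation is unital; the paper fixes this by noting that $\varphi(1)$ is a contractive idempotent and quoting Tzafriri's theorem that its range is again an $L^p$-space, so one may cut down to a unital representation. You should insert this step.

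Your $p=2$ argument has a genuine gap. You try to conclude that $D=A_{\mathrm{h}}+iA_{\mathrm{h}}$ is a \casub{} via Vidav--Palmer, but Vidav--Palmer takes as input a unital Banach \emph{algebra} $B$ with $B=B_{\mathrm{h}}+iB_{\mathrm{h}}$; you have not shown $D$ is closed under multiplication, which is precisely the point at issue. The ``alternative'' route through $\overline{D}$ has the same problem: closure in norm does not give closure under products. The paper avoids this circularity by a direct and much shorter argument: since $A_{\mathrm{h}}$ sits inside the self-adjoint operators, one has $\core(A)\subseteq A\cap A^*$; conversely $A\cap A^*$ is visibly a unital \casub{} of $A$ (it is a $*$-closed norm-closed subalgebra of $\B(\Hi)$), so $A\cap A^*=(A\cap A^*)_{\mathrm{h}}+i(A\cap A^*)_{\mathrm{h}}\subseteq A_{\mathrm{h}}+iA_{\mathrm{h}}$. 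Thus $\core(A)=A\cap A^*$, which is the largest unital \casub. Replace your $p=2$ paragraph with this.
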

\begin{proof}
Let $\varphi\colon A\to \Bdd(L^p(\mu))$ be an isometric representation of $A$ on some $L^p$-space $L^p(\mu)$.
Since $\varphi(1)$ is a contractive idempotent on $L^p(\mu)$, its 
image is isometrically isomorphic to an $L^p$-space by Theorem~6 in~\cite{Tza69CntrProjLp}.
Thus, upon replacing $L^p(\mu)$ with the image of $\varphi(1)$, we may assume that $A$ is a unital, closed subalgebra of $\Bdd(L^p(\mu))$.

For $p\neq 2$, the result follows by combining \autoref{prp:HermSubalg}, \autoref{prp:largestSubCa} and \autoref{prp:HermBLpSubalg}.
On the other hand, the result is standard for $p=2$, and we include the short argument:
If $A\subseteq \Bdd(L^2(\mu)))$ is unital, then
$\core(A)$ is a subset of the intersection 
$A\cap A^* \subseteq \Bdd(L^2(\mu))$.
On the other hand, $A\cap A^*$ is a unital \casub{} of $A$, and hence
\[
A\cap A^* = (A\cap A^*)_{\mathrm{h}}+i(A\cap A^*)_{\mathrm{h}} \subseteq A_{\mathrm{h}}+iA_{\mathrm{h}} = \core(A).
\]
Thus, $\core(A)=A\cap A^*$, which is therefore the largest unital \casub{} of $A$.
\end{proof}

\begin{dfn}
Let $p\in [1,\infty)$, and let $A$ be a unital $L^p$-operator algebra.
We call the algebra $\core(A):=A_{\mathrm{h}}+iA_{\mathrm{h}}$ the \emph{$C^*$-core} of $A$.
\end{dfn}

\begin{exa}
\label{exa:BLp}
Let $(X,\mu)$ be a localizable measure space and let $p\in [1,\infty)\setminus\{2\}$.
Then $\core(\Bdd(L^p(\mu)))=\{m_f\colon f\in L^\infty(\mu)\}\cong L^\infty(\mu)$, the algebra of multiplication operators.
\end{exa}

\begin{rmk}
Let $(X,\mu)$ be a localizable measure space, let $p\in [1,\infty)\setminus\{2\}$, and let $A\subseteq \Bdd(L^p(\mu))$ be a closed subalgebra.
Then $A_{\mathrm{h}}=A\cap L^\infty_\RR(\mu)$, and thus
\[\mathrm{core}(A)=\big(A\cap L^\infty_\RR(\mu)\big)+ i\big(A\cap L^\infty_\RR(\mu)\big),\]
which can be strictly smaller than $A\cap L^\infty(\mu)$. This is the case, for example, 
for the disc algebra 
\[A(\mathbb{D})=\{f\in C(\mathbb{D})\colon f|_{\mathbb{D}^\circ} \mbox{ is holomorphic}\}.\]
Indeed, 
since $A(\mathbb{D})$ is a Banach subalgebra of 
$L^\infty(\mathbb{D})$, it is in particular an $L^p$-operator algebra for every $p\in [1,\infty)$. On
the other hand, $A(\mathbb{D})\cap L^\infty_{\mathbb{R}}(\mathbb{D})=\{0\}$, and 
thus $\core(A(\mathbb{D}))=\{0\}$, although 
$A(\mathbb{D})\cap L^\infty(\mathbb{D})=A(\mathbb{D})$.\end{rmk}

The next result follows directly from \autoref{prp:HermUnitalMaps}.

\begin{prp}
\label{prp:CorePreserved}
Let $p,q\in [1,\infty)$, let $A$ be a unital $L^p$-operator algebra, let $B$ be a unital $L^q$-operator algebra, and let
$\varphi\colon A\to B$ be a unital, contractive, linear map. 
Then $\varphi(\core(A))\subseteq\core(B)$, and 
$\varphi\colon \core(A)\to \core(B)$ is a $\ast$-homomorphism. 
\end{prp}

\begin{rmk}
\autoref{prp:CorePreserved} does not generalize to non-unital maps, even if they are multiplicative: for 
$p\neq 2$,
consider the homomorphism $\CC\to M_2=\Bdd(\ell^p(\{0,1\})$ determined by sending the unit to a contractive, non-hermitian idempotent, such as $e=\frac{1}{2} \left( \begin{smallmatrix}
    1 & 1 \\
    1 & 1
\end{smallmatrix}\right)$. Indeed, since the 
idempotent $e$ has the form $e=\frac{1}{2}(I+U)$, where $I$ is the unit in $M_2$ and $U$ is the invertible isometry $U=\left( \begin{smallmatrix}
    0 & 1 \\
    1 & 0
\end{smallmatrix}\right)$, it follows from the 
triangle inequality that $e$ is contractive. 
Moreover, $e$ is not hermitian for $p\neq 2$, 
since it does 
not belong to $\ell^\infty(\{0,1\})$.
\end{rmk}

\begin{dfn}
\label{df:CondExp}
Given a unital Banach algebra $A$ and a unital, closed subalgebra $B\subseteq A$, a \emph{conditional expectation} from $A$ onto $B$ is a unital, contractive, linear map $E\colon A\to B$ satisfying $E(b_1ab_2)=b_1E(a)b_2$ for all $a\in A$ and $b_1,b_2\in B$. (In particular, $E(b)=b$ for all $b\in B$.)
\end{dfn}


The notion of a conditional expectation is well-established for \ca{s}, and generalizations to Banach algebras such as the one above (but also variations thereof) have been considered in several places;
see for example \cite{LauLoy08ContrProjBAlg}.

We record the following fact for future use.
It is an immediate consequence of \autoref{prp:CorePreserved}, since conditional expectations are unital and contractive.

\begin{prp}
\label{prp:coreCondExp}
Let $A$ be a unital $L^p$-operator algebra, let $B\subseteq A$ be a unital, closed subalgebra, and let $E\colon A\to B$ be a conditional expectation.
Then $E(\core(A))=\core(B)$, and thus $E$ restricts to a conditional expection between the respective $C^*$-cores.
\end{prp}

We end this section by exploring $C^*$-cores in reduced crossed products. 
First, we recall some elementary facts from \cite{Phi13arX:LpCrProd}, whose notation we follow.

\begin{pgr}
\label{pgr:crProduct}
Let $G$ be a discrete group, let $A$ be a unital Banach algebra, and let $\alpha\colon G\to\Aut(A)$ be an action by isometric isomorphisms.
We use $C_c(G,A,\alpha)$ to denote the complex algebra of functions $G\to A$ with finite support.
Given $a\in A$ and $g\in G$, we let $a u_g\in C_c(G,A,\alpha)$ be the function that maps $g$ to $a$ and everything else to $0$.
We write $u_g$ for $1 u_g$, and observe that any element in $C_c(G,A,\alpha)$ can be written uniquely as $\sum_{g\in G} a_g u_g$, where all but finitely many $a_g\in A$ are zero.

The product in $C_c(G, A,\alpha)$ is determined by the (formal) rules $u_g u_h=u_{gh}$ and $u_g a u_{g^{-1}} = \alpha_g(a)$, for $g,h\in G$ and $a\in A$.
In particular, $u_1$ is the unit of $C_c(G,A,\alpha)$. 
Moreover, we have canonical unital homomorphisms $C_c(G)\to C_c(G,A,\alpha)$ and $A\to C_c(G,A,\alpha)$ given by $u_g\mapsto u_g$ and $a\mapsto au_1$.

A \emph{representation} of $(G,A,\alpha)$ on an $L^p$-space $E$ is a pair $(\pi,v)$ where $\pi\colon A\to\Bdd(E)$ is a unital, contractive homomorphism and $v\colon G\to\Isom(E)$ is an isometric representation of $G$, satisfying
$v_g\pi(a)v_{g^{-1}} = \pi(\alpha_g(a))$ for all $g\in G$ and $a\in A$.
We write $\mathrm{Rep}_p(G,A,\alpha)$ for the class of representations
of $(G,A,\alpha)$ on $L^p$-spaces.
Given $(\pi,v)\in \Rep_p(G,A,\alpha)$ as above, there is 
a unital homomorphism $\pi\rtimes v\colon C_c(G,A,\alpha)\to\Bdd(E)$ given by
$(\pi\rtimes v)(au_g) = \pi(a)v_g$
for $a\in A$ and $g\in G$.

Next, we recall the construction of regular representations of $(G,A,\alpha)$.
Let $\pi_0\colon A\to B(L^p(\mu))$ be a unital, contractive representation
on an $L^p$-space $L^p(\mu)$.
Let $c_G$ denote the counting measure on $G$.
As in \cite[Lemma~2.10]{Phi13arX:LpCrProd}, we identify 
$L^p(c_G\times\mu)$ with $\ell^p(G,L^p(\mu))$. 
By \cite[Lemma~2.11]{Phi13arX:LpCrProd}, the representation $\lambda^\mu$ of $G$ on $\ell^p(G,L^p(\mu))$ given by
$\lambda^\mu=\lambda\otimes \id_{L^p(\mu)}$ is isometric.
Further, there is a unital, contractive representation $\pi$ of $A$ on $\ell^p(G,L^p(\mu))$ given by
\[
(\pi(a)\xi)(g)
= \pi_0(\alpha_g^{-1}(a))(\xi(g)),
\]
for $g\in G$, $a\in A$ and $\xi\in \ell^p(G,L^p(\mu))$.
Then $(\pi,\lambda^\mu)$ is a covariant representation, and $\pi\rtimes \lambda^\mu$ is called the \emph{regular representation} induced by~$\pi_0$. We write $\mathrm{RegRep}_p(G,A,\alpha)$ for the class of 
regular representations of $(G,A,\alpha)$ on $L^p$-spaces.
\end{pgr}


\begin{dfn}\label{df:CrossedProds}
Given $f\in C_c(G,A,\alpha)$, set 
\begin{align*}
 \|f\|&= \sup\big\{\|(\pi\rtimes v)(f)\|\colon (\pi,v)\in\Rep_p(G,A,\alpha)\big\},\ \ \mbox{and} \\
 \|f\|_\lambda&= \sup\big\{\|(\pi\rtimes \lambda^\mu)(f)\|\colon (\pi,\lambda^\mu)\in\mathrm{RegRep}_p(G,A,\alpha)\big\}.
\end{align*}
The \emph{full $L^p$-operator crossed product} of $(G,A,\alpha)$, denoted by $F^p(G,A,\alpha)$, is the completion of $C_c(G,A,\alpha)$ in the norm $\|\cdot\|$,
while the \emph{reduced $L^p$-operator crossed product} of $(G,A,\alpha)$, denoted by $F^p_\lambda(G,A,\alpha)$, is the completion of $C_c(G,A,\alpha)$ in the norm
$\|\cdot\|_\lambda$.\end{dfn}

By \cite[Remark~4.6]{Phi13arX:LpCrProd}, the identity on $A$ induces a unital, isometric homomorphism $A\to F^p_\lambda(G,A,\alpha)$, 
which we use to identify $A$ with a unital subalgebra of $F^p_\lambda(G,A,\alpha)$.
We let $E\colon F^p_\lambda(G,A,\alpha)\to A$ be the standard conditional expectation as in \cite[Definition~4.11]{Phi13arX:LpCrProd}, which
satisfies $E(au_g)=a$ if $g=1$, and zero
else.

\begin{thm}
\label{prp:coreCrossedProd}
Let $p\in[1,\infty)\setminus\{2\}$, let $G$ be a discrete group, let $A$ be a unital $L^p$-operator algebra, and let $\alpha\colon G\to\Aut(A)$ be an action.
Then the canonical embedding $A\subseteq F^p_\lambda(G,A,\alpha)$ identifies the $C^*$-core of $F^p_\lambda(G,A,\alpha)$ with that of $A$, that is,
$\core(F^p_\lambda(G,A,\alpha)) = \core(A)$.
\end{thm}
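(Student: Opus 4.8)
The containment $\core(A)\subseteq\core(F^p_\lambda(G,A,\alpha))$ is immediate: the inclusion $\iota\colon A\hookrightarrow F^p_\lambda(G,A,\alpha)$ is unital and isometric, hence unital and contractive, so by \autoref{prp:CorePreserved} it maps $\core(A)$ into $\core(F^p_\lambda(G,A,\alpha))$. The substance is the reverse inclusion, for which the plan is to show that every element of $\core(F^p_\lambda(G,A,\alpha))$ actually lies in the copy of $A$. Since we already know (from \autoref{prp:coreCondExp}, applied to the standard conditional expectation $E\colon F^p_\lambda(G,A,\alpha)\to A$) that $E$ carries the $C^*$-core of the crossed product \emph{onto} $\core(A)$, it suffices to prove that $E$ restricts to the \emph{identity} on $\core(F^p_\lambda(G,A,\alpha))$; equivalently, that every hermitian element of $F^p_\lambda(G,A,\alpha)$ is ``diagonal'', i.e.\ supported on the identity of $G$.

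\textbf{Key steps.}
First I would reduce to understanding hermitian elements concretely. Fix a faithful unital isometric representation $\pi_0\colon A\to\B(L^p(X,\mu))$ and pass to the associated regular representation $\pi\rtimes v$ of $F^p_\lambda(G,A,\alpha)$ on $\ell^p(G,L^p(X,\mu))\cong L^p(G\times X,\nu\times\mu)$; replacing $\mu$ by its associated localizable measure algebra as in \autoref{prp:RealizeLpByLocalizable}, we may assume $G\times X$ carries a localizable measure algebra, so that \autoref{eg:HermBLp} applies: any hermitian operator on $L^p(G\times X)$ is multiplication by a function in $L^\infty_\RR(G\times X)$. Now let $a\in F^p_\lambda(G,A,\alpha)$ be hermitian and write its ``Fourier coefficients'' $a_g = E(u_{g^{-1}}a)\in A$ for $g\in G$ (so $a_1=E(a)$), using that $E$ and the multiplication by $u_g$ are contractive. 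Under $\pi\rtimes v$, the operator $(\pi\rtimes v)(a)$ is multiplication by some $f\in L^\infty_\RR(G\times X)$. The final step is to compute how a genuine multiplication operator on $L^p(G\times X)$ interacts with the translation operators $v_g$ and the ``diagonal'' structure: a multiplication operator commutes, up to the $G$-action, in a way that forces all off-diagonal Fourier coefficients to vanish. Concretely, one computes $(\pi\rtimes v)(a_g u_g) = \pi(a_g)v_g$ and checks that for $g\neq 1$ the operator $v_g$ shifts the $\ell^p(G)$-index, so it can never be a piece of a multiplication operator unless $\pi(a_g)=0$; since $\pi_0$ (hence $\pi$) is faithful, $a_g=0$ for all $g\neq 1$. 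Thus $a = a_1 = E(a)\in A$, and being hermitian in $F^p_\lambda(G,A,\alpha)$ it is hermitian in $A$ by \autoref{prp:HermSubalg}. Hence $A_{\mathrm h}\supseteq F^p_\lambda(G,A,\alpha)_{\mathrm h}$, and taking $A_{\mathrm h}+iA_{\mathrm h}$ gives $\core(A)\supseteq\core(F^p_\lambda(G,A,\alpha))$.

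\textbf{Main obstacle.}
The delicate point is the last one: rigorously extracting ``the Fourier coefficients of a multiplication operator vanish off the diagonal'' from the regular representation. The operator $(\pi\rtimes v)(a)$ decomposes, at the level of the dense subalgebra $C_c(G,A,\alpha)$, as $\sum_g \pi(a_g)v_g$, and one must show this equals multiplication by $f\in L^\infty_\RR$ only if the sum collapses to the $g=1$ term. The clean way is to test against elementary tensors $\delta_h\otimes\xi\in\ell^p(G,L^p(X))$: a multiplication operator preserves the ``fibre over $h$'', i.e.\ sends $\delta_h\otimes L^p(X)$ into itself, whereas $\pi(a_g)v_g$ sends $\delta_h\otimes\xi$ into the fibre over $gh$. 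Comparing supports in $G$ forces $\pi(a_g)=0$ for $g\neq 1$, using that $\pi$ acts fibrewise by the faithful representations $\pi_0\circ\alpha_h^{-1}$. Some care is needed because $a$ is only a norm-limit of elements of $C_c$, so the identity $(\pi\rtimes v)(a) = \sum_g\pi(a_g)v_g$ holds only in a suitably interpreted (e.g.\ strong, after testing on finitely supported vectors) sense; but once the fibrewise behaviour of $(\pi\rtimes v)(a)$ is pinned down by \autoref{eg:HermBLp} and compared with that of the $v_g$, the conclusion $a_g=0$ $(g\neq 1)$ follows, and the theorem drops out.
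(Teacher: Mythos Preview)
Your proposal is correct and follows essentially the same route as the paper: pass to a regular representation, invoke \autoref{eg:HermBLp} to see that a hermitian element of the crossed product becomes a multiplication operator on $\ell^p(G,L^p(X))$, and exploit the fibre structure over $G$ to kill the off-diagonal Fourier coefficients. The paper packages the extraction via an explicit compression map $F\colon\B(\ell^p(G,L^p(X)))\to\B(L^p(X))$ satisfying $F\circ(\pi\rtimes v)=\pi_0\circ E$, and---the one point you gloss over---explicitly invokes faithfulness of $E$ (in the sense that the vanishing of all shifted coefficients $E(xu_g)$ forces $x=0$) to pass from ``$a_g=0$ for $g\neq 1$'' to ``$a=E(a)$''.
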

\begin{proof}
Use \cite[Lemma~3.19]{Phi13arX:LpCrProd} to find an isometric, unital representation $\pi_0\colon A\to \Bdd(L^p(\mu))$ such that 
$\pi\rtimes \lambda^\mu$ 
induces the norm of $F^p_\lambda(G,A,\alpha)$. 
By \autoref{prp:RealizeLpByLocalizable} and without loss of generality, we assume that 
$\mu$ is localizable. 
Given $g\in G$ and $\xi\in L^p(\mu)$, we let $\delta_g\otimes\xi$ denote the element in $\ell^p(G,L^p(\mu))$ that maps $g$ to $\xi$ and every other element in $G$ to zero.
Then
\[
\lambda^\mu_g (\delta_h\otimes\xi) = \delta_{gh}\otimes\xi, \quad\text{ and }\quad
\pi(a) (\delta_h\otimes\xi) = \delta_h\otimes \pi_0(\alpha_h^{-1}(a))(\xi),
\]
for $g,h\in G$, $a\in A$ and $\xi\in L^p(\mu)$.
Define $F\colon \Bdd(\ell^p(G,L^p(\mu))) \to \Bdd(L^p(\mu))$ by
\[
F(x)(\xi)= (x(\delta_1\otimes\xi))(1),
\]
for $x\in \Bdd(\ell^p(G,L^p(\mu))$ and $\xi\in L^p(\mu)$.

\textbf{Claim~1:}\emph{ 
We have $F\circ(\pi\rtimes \lambda^\mu)=\pi_0\circ E$, that is, the following diagram commutes:}
\[
\xymatrix{
F^p_\lambda(G,A,\alpha) \ar[rr]^-{\pi\rtimes \lambda^\mu} \ar[d]_{E}
&&  \Bdd(\ell^p(G,L^p(\mu))) \ar[d]^{F} \\
A \ar[rr]_-{\pi_0}
&& \Bdd(L^p(\mu)).
}
\]
Since all the maps involved are continuous, it is enough to verify the equality on $C_c(G,A,\alpha)$.
Let $a=\sum_{g\in G} a_g u_g \in C_c(G,A,\alpha)$, and let $\xi\in L^p(\mu)$.
Then
\begin{align*}
(\pi_0\circ E)(a)\xi = \pi_0(a_1)\xi.
\end{align*}
On the other hand, we have
\begin{align*}
(F\circ(\pi\rtimes \lambda^\mu))(a)\xi
&= (\pi\rtimes \lambda^\mu)(a)(\delta_1\otimes\xi)(1)
= \Big( \sum_{g\in G} \pi(a_g) \lambda^\mu_g \Big)(\delta_1\otimes\xi)(1) \\
&= \Big( \sum_{g\in G} \delta_g\otimes  \pi_0(\alpha_g^{-1}(a_g))(\xi) \Big)(1)
= \pi_0(a_1)\xi,
\end{align*}
as desired.

For the next two claims, we fix $g\in G\setminus\{1\}$.

\textbf{Claim~2:}
\emph{If $\eta\colon G\to L^\infty(\mu)$ is a bounded function with associated multiplication operator $m_\eta\in\Bdd(\ell^p(G,L^p(\mu))$,
then $F(m_\eta \lambda^\mu_g )=0$.}
To prove the claim, let
$\xi\in L^p(\mu)$. Then 
\begin{align*}
F(m_\eta \lambda^\mu_g)(\xi)&= (m_\eta \lambda^\mu_g (\delta_1\otimes\xi))(1)
= (m_\eta(\delta_g\otimes\xi))(1)\\
&= (\delta_g\otimes ( m_{\eta(g)} \xi))(1)=0.
\end{align*}

\textbf{Claim~3:}
\emph{Let $a\in\core( F^p_\lambda(G,A,\alpha) )$.
Then $E(au_g )=0$.}
Note that $(\pi\rtimes v)(a)$ belongs to the $C^*$-core of $\Bdd(\ell^p(G,L^p(\mu))$, since $\pi\rtimes v$
is a contractive, unital map.
By \autoref{exa:BLp}, and since $\mu$ is localizable, there exists a bounded function $\eta\colon G\to L^\infty(\mu)$ such that $m_\eta=(\pi\rtimes v)(a)$.
Using Claim~1 at the first step, and using Claim~2 at the last step, we get
\begin{align*}
\pi_0(E(au_g))
&= F( (\pi\rtimes v)(au_g) )
= F( (\pi\rtimes v)(a) \lambda^\mu_g )= F(m_\eta \lambda^\mu_g)= 0.
\end{align*}
Since $\pi_0$ is isometric, the claim follows.

Let $a\in\core( F^p_\lambda(G,A,\alpha) )$.
We want to show that $a=E(a)$.
By \autoref{prp:coreCondExp}, we have $E(a)\in\core(A)\subseteq\core( F^p_\lambda(G,A,\alpha) )$.
Thus, for each $g\in G\setminus\{1\}$, we have
\[
E(au_g) = 0 = E(E(a)u_g).
\]
For $g=1$, we have $E(au_1)=E(a)=E(E(a)u_1)$.
Since $E$ is faithful (see \cite[Proposition~4.9]{Phi13arX:LpCrProd}), it follows that $a=E(a)$, as desired.	
\end{proof}

\begin{cor}
Let $G\curvearrowright X$ be a topological action of a discrete group $G$ on a compact, Hausdorff space $X$.
Then $\core(F^p_\lambda(G,X))=C(X)$. 
\end{cor}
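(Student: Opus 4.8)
The plan is to read the statement off from \autoref{prp:coreCrossedProd}. By definition, $F^p_\lambda(G,X)$ is the reduced $L^p$-operator crossed product $F^p_\lambda(G,C(X),\alpha)$ for the action $\alpha\colon G\to\Aut(C(X))$ induced by $G\curvearrowright X$, namely $\alpha_g(f)=f\circ g^{-1}$; this is an action by isometric automorphisms since each $g$ acts by a homeomorphism of $X$. Now $C(X)$ is a commutative unital $C^*$-algebra, and such an algebra is a unital $L^p$-operator algebra: pointwise multiplication defines an isometric, unital homomorphism $C(X)\to\B(\ell^p(X))$, where $\ell^p(X)$ is the $L^p$-space of $X$ with the counting measure (isometry follows by testing on point masses). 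Alternatively, $C(X)$ is a unital closed subalgebra of the $L^p$-operator algebra $F^p_\lambda(G,X)$, and restricting an isometric representation shows that such a subalgebra is again an $L^p$-operator algebra. Either way, \autoref{prp:coreCrossedProd} applies with $A=C(X)$ and yields
\[
\core\big(F^p_\lambda(G,X)\big) \;=\; \core\big(C(X)\big),
\]
as unital subalgebras of $F^p_\lambda(G,X)$ via the canonical embedding.

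It then remains to identify $\core(C(X))$. By definition $\core(C(X))=C(X)_{\mathrm{h}}+iC(X)_{\mathrm{h}}$, and since $C(X)$ is a unital $C^*$-algebra its hermitian elements are precisely its self-adjoint elements, i.e.\ the real-valued functions, so $C(X)_{\mathrm{h}}=C_{\RR}(X)$ (consistent with \autoref{prp:HermSubalg}, which also gives this intrinsic description). Therefore
\[
\core\big(C(X)\big)=C_{\RR}(X)+iC_{\RR}(X)=C(X),
\]
and combining the two displays gives $\core(F^p_\lambda(G,X))=C(X)$.

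There is no real obstacle here: the entire content is \autoref{prp:coreCrossedProd} (which itself rests, via \autoref{eg:HermBLp}, on Lamperti's description of the isometries of an $L^p$-space). The only points that need a word of care are that $C(X)$ genuinely qualifies as a unital $L^p$-operator algebra, so that its $C^*$-core is defined and the theorem is applicable, and that the canonical copy of $C(X)$ sitting inside $F^p_\lambda(G,X)$ is the same unital subalgebra that appears in \autoref{prp:coreCrossedProd}; both are immediate from the constructions recalled above.
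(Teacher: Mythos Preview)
Your argument is correct and matches the paper's intent exactly: the corollary is stated without proof, as it follows immediately from \autoref{prp:coreCrossedProd} applied with $A=C(X)$ together with the observation that $\core(C(X))=C(X)$ because $C(X)$ is already a unital $C^*$-algebra. The only implicit point worth noting is that, like \autoref{prp:coreCrossedProd}, the corollary tacitly assumes $p\in[1,\infty)\setminus\{2\}$ (for $p=2$ the crossed product is itself a $C^*$-algebra and its $C^*$-core is the whole algebra, not $C(X)$).
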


Given a discrete group $G$, we use $F^p_\lambda(G)$ to denote its reduced group $L^p$-operator algebra (\cite{Phi13arX:LpCrProd}), which was originally introduced by Herz as the `algebra of $p$-pseudofunctions' (see also \cite[Definition~3.1]{GarThi15GpAlgLp}).
We have $F^p_\lambda(G)\cong F^p_\lambda(G,\{\ast\})$. 

\begin{cor}
\label{prp:coreGpAlg}
Let $G$ be a discrete group.
Then $\core( F^p_\lambda(G) ) = \CC$.
\end{cor}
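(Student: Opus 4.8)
The plan is to exhibit $F^p_\lambda(G)$ as a reduced $L^p$-operator crossed product with trivial coefficients and then quote \autoref{prp:coreCrossedProd}. As recorded just above the statement, there is a canonical isometric isomorphism $F^p_\lambda(G)\cong F^p_\lambda(G,\{\ast\})$, where $\{\ast\}$ denotes the one-point space; in the notation of the crossed-product section this algebra is $F^p_\lambda(G,\CC,\id)$, the reduced $L^p$-operator crossed product of the (necessarily trivial) action of $G$ on $\CC=C(\{\ast\})$.

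First I would note the trivial fact that $\core(\CC)=\CC$: since $\CC$ is a unital $C^*$-algebra we have $\CC_{\mathrm{h}}=\RR$, and hence $\CC_{\mathrm{h}}+i\CC_{\mathrm{h}}=\RR+i\RR=\CC$. Then \autoref{prp:coreCrossedProd}, applied to the action $G\curvearrowright\CC$ (which requires $p\neq 2$, as in the statement), says precisely that the canonical unital embedding $\CC\hookrightarrow F^p_\lambda(G,\CC,\id)$ identifies $\core\big(F^p_\lambda(G,\CC,\id)\big)$ with $\core(\CC)=\CC$. Transporting this along the isomorphism $F^p_\lambda(G)\cong F^p_\lambda(G,\{\ast\})$ yields $\core(F^p_\lambda(G))=\CC$; equivalently, this is just the case $X=\{\ast\}$ of the preceding corollary.

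There is essentially no obstacle here: the content is entirely contained in \autoref{prp:coreCrossedProd}, and the only points needing (routine) care are the already-quoted identification $F^p_\lambda(G)\cong F^p_\lambda(G,\{\ast\})$ and the observation $\core(\CC)=\CC$. The one thing worth keeping in mind is that the statement is meant for $p\neq 2$: for $p=2$ one has $F^2_\lambda(G)=C^*_\lambda(G)$, which is its own $C^*$-core and is in general far larger than $\CC$.
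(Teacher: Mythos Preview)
Your proposal is correct and is exactly the approach intended by the paper: the corollary is stated without proof because it follows immediately from \autoref{prp:coreCrossedProd} applied with $A=\CC$ (equivalently, it is the case $X=\{\ast\}$ of the preceding corollary), together with the identification $F^p_\lambda(G)\cong F^p_\lambda(G,\{\ast\})$ recorded just before the statement. Your observation that the implicit hypothesis $p\neq 2$ is needed is also correct.
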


\begin{pbm}
Given a countable, discrete group $G$, determine the $C^*$-core of its full group $L^p$-operator algebra. Can one give an explicit description for $G=\mathbb{F}_n$?
\end{pbm}

Another fundamental tool for the computation of $C^*$-cores will be given in \autoref{prp:coreGpd}.
We postpone the discussion of further examples until then.

\section{The Weyl groupoid of an \texorpdfstring{$L^p$}{Lp}-operator algebra}

From now on and until the end of this section, we fix $p\in [1,\infty)\setminus\{2\}$.
Thus, given a unital $L^p$-operator algebra $A$, its core is a commutative, unital \ca{} by \autoref{thm:C*core},
and we write $X_A$ for its spectrum, which is a compact Hausdorff space.
Under this identification, we will regard $C(X_A)$ as a unital subalgebra of $A$.

In this section, we first construct a canonical inverse semigroup of partial homeomorphisms on $X_A$;
see \autoref{cor:RealizableHomInvSmgp}.
The associated groupoid of germs, which we denote by $\calG_A$, is a topologically principal, \'etale groupoid with unit space $X_A$,
which we call the \emph{Weyl groupoid} of $A$;
see \autoref{df:WeylGroupoid}.
The Weyl groupoid contains information about the internal dynamics of the algebra $A$.
This can be best seen in the case of crossed products:
we will show that for topologically free actions,
the Weyl groupoid of the crossed product can be naturally identified with the 
transformation groupoid (see \autoref{thm:GroupoidRigidity} and the remarks at the beginning
of Section~6 for the details).

For the next definition, we use $C(X_A)_+$ to denote the set of continuous functions $X_A\to[0,\infty)$.
Note that $C(X_A)_+$ is the set of positive hermitian elements in $A$.

\begin{dfn}
\label{df:Normalizer}
Let $A$ be a unital $L^p$-operator algebra.
Given open subsets $U,V\subseteq X_A$ and a homeomorphism $\alpha\colon U\to V$,
we say that $\alpha$ is \emph{realizable (within $A$)} if there exist $a,b\in A$ satisfying the following conditions.
\begin{enumerate}
\item
For all $f\in C(X_A)_+$, we have $afb, bfa\in C(X_A)_+$.
\item We have
$U=\{x\in X_A\colon ba(x)>0\}$ and $V=\{x\in X_A\colon ab(x)>0\}$.
\item
For all $x\in U$, all $y\in V$, all $f\in C_0(V)$ and all $g\in C_0(U)$, we have
\[
f(\alpha(x))ba(x)=bfa(x) \ \ \mbox{ and } \ \ g(\alpha^{-1}(y))ab(y)=agb(y).
\]
\end{enumerate}
In this case, we say that $s=(a,b)$ is an \emph{admissible pair} which \emph{realizes} $\alpha$, and
we write $\alpha=\alpha_{s}$, $U=U_s$ and $V=V_s$. 
\end{dfn}

Realizable pairs as in the definition above will play the role of the 
normalizers used by Renault in \cite{Ren08Cartan}.
Indeed, a pair $(a,b)$ replaces what in Renault's context would be a pair of the form 
$(a,a^*)$ where $a$ is a normalizer.
In our setting, however, there are a number of difficulties arising from the absence of a canonical involution on a general $L^p$-operator algebra. 

\begin{prp}
\label{lma:NormalizerBasic}
Let $A$ be a unital $L^p$-operator algebra, and let $s=(a,b)$ and $t=(c,d)$ be admissible pairs in $A$.
\begin{enumerate}
\item[(a)] 
The inverse of $\alpha_s$ is realized by the \emph{reverse} of $s$, which is defined to be the admissible pair $s^\sharp=(b,a)$.  
\item[(b)]
The product $st=(ac,db)$ realizes the composition
\[
\alpha_s\circ \alpha_t|_{U_t\cap \alpha_t^{-1}(U_s)}\colon U_t\cap \alpha_t^{-1}(U_s)\to V_s\cap \alpha_s(V_t).
\]
\item[(c)]
For every $f\in C(X_A)$, the pair $s_f=(f,\overline{f})$ is admissible and $\alpha_{s_f}=\id_{U_{s_f}}$.
In particular, the identity map on every open subset of $X_A$ is realizable.
\end{enumerate}
\end{prp}
\begin{proof}
Part (a) is immediate from the definition, so we check~(b). 
Condition~(1) in \autoref{df:Normalizer} is readily verified for the pair $(ac,db)$.
Set
\[U_{st}=U_t\cap \alpha_t^{-1}(U_s) \ \ \mbox{ and } \ \ V_{st}=V_s\cap \alpha_s(V_t),\]
which are open subsets of $X_A$. 
We claim that 
\[
U_{st} = \big\{ x\in X_A : dbac(x)>0 \big\} \ \ \mbox{ and } \ \ 
V_{st} = \big\{ x\in X_A : dbac(x)>0 \big\},
\]
which is Condition~(2) in \autoref{df:Normalizer}.
We prove the first equality, since the other one is obtained by considering the reverses of $s$ and $t$.
Set $f=ba$, which is a strictly positive function on $U_s=\{x\in X_A\colon ba(x)>0\}$.
Using condition~(3) of \autoref{df:Normalizer} for the pair $(c,d)$, we get
\[
dbac(x)=dfc(x)=f(\alpha_t(x))dc(x)=ba(\alpha_t(x))dc(x)
\]
for all $x\in X_A$.
Note that the composition $f\circ \alpha_t$ is a strictly positive function in $C_0(U_{st})$.
In particular, the expression above is positive if and only if $ba(\alpha_t(x))>0 $ and $dc(x)>0$, which is equivalent 
to $x\in U_{st}$.
This proves the claim.

It remains to verify Condition~(3) in \autoref{df:Normalizer}; we will only do the first half, since 
the other one is analogous.
Let $x\in U_{st}$ and let $f\in C_0(V_{st})$. In the following computation, we use the identity 
$dbac(x)=ba(\alpha_t(x))dc(x)$ at the first step; the fact that $(a,b)$ realizes $\alpha_s$ at the 
second step; and the fact that $(c,d)$ realizes $\alpha_t$ at the third step (applied to $bfa$
in place of $f$), to get
\begin{align*}
f(\alpha_s(\alpha_t(x)))dbac(x)&=f(\alpha_s(\alpha_t(x)))ba(\alpha_t(x))dc(x)\\
&=bfa(\alpha_t(x))dc(x)
=dbfac(x). 
\end{align*}
This completes the proof.

Finally, part~(c) is immediately checked, using that $C(X_A)$ is commutative.
\end{proof}

For the reader's convenience, and to fix notation and terminology, we include some standard background on inverse semigroups and \'etale groupoids.
Recall that an \emph{inverse semigroup} is a semigroup $S$ together with an involution
$\sharp\colon S\to S$ satisfying $ss^\sharp s=s$ for all $s\in S$.
A inverse subsemigroup of $S$ is a subsemigroup that is closed under the involution.
A typical example of an inverse semigroup is $\Homeo_{\mathrm{par}}(X)$, the set
of partial homeomorphisms of a compact Hausdorff space $X$.

\begin{cor}
\label{cor:RealizableHomInvSmgp}
Let $A$ be a unital $L^p$-operator algebra.
Then the set of realizable partial homeomorphisms on $X_A$ is an inverse subsemigroup of $\Homeo_{\mathrm{par}}(X_A)$.
\end{cor}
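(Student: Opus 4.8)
The plan is to verify directly that the set $S \subseteq \Homeo_{\mathrm{par}}(X_A)$ of realizable partial homeomorphisms satisfies the two closure conditions defining an inverse subsemigroup, namely closure under inversion and under composition, using \autoref{lma:NormalizerBasic}. Most of the work has already been done; the corollary is essentially a bookkeeping statement extracting the semigroup-theoretic content from that proposition, so I would keep the proof short.

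First I would recall that $\Homeo_{\mathrm{par}}(X_A)$ is an inverse semigroup under composition of partial homeomorphisms, where the composite $\beta\circ\alpha$ of $\alpha\colon U\to V$ and $\beta\colon U'\to V'$ is defined on $\alpha^{-1}(V\cap U')$ with range $\beta(V\cap U')$, and the involution sends $\alpha$ to $\alpha^{-1}$; this is the standard symmetric inverse monoid on $X_A$. Then, to see that $S$ is an inverse subsemigroup, it suffices by definition to check that $S$ is closed under the involution and under the multiplication of $\Homeo_{\mathrm{par}}(X_A)$. Closure under the involution is precisely \autoref{lma:NormalizerBasic}(a): if $\alpha=\alpha_s$ is realized by an admissible pair $s=(a,b)$, then $\alpha^{-1}=\alpha_{s^\sharp}$ is realized by the reverse $s^\sharp=(b,a)$, which is again an admissible pair. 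Closure under composition is \autoref{lma:NormalizerBasic}(b): if $\alpha_s$ and $\alpha_t$ are realizable, then their composite (restricted to the appropriate open set) equals $\alpha_{st}$, which is realized by the product $st=(ac,db)$. One should also note that $S$ is nonempty — indeed by part~(c) the identity map on any open subset of $X_A$, in particular on $X_A$ itself, is realizable — though nonemptiness is not strictly required for ``subsemigroup''.

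There is essentially no obstacle here: every nontrivial assertion has already been established in \autoref{lma:NormalizerBasic}, and the only thing to be careful about is matching the ad hoc domain/range bookkeeping in part~(b) of that proposition (``$\alpha_s\circ\alpha_t$ restricted to $U_t\cap\alpha_t^{-1}(U_s)$'') with the intrinsic definition of composition in $\Homeo_{\mathrm{par}}(X_A)$ — but these agree by construction, since $U_t$ is the domain of $\alpha_t$ and $U_s$ is the domain of $\alpha_s$. So the proof would read, in full:

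\begin{proof}
Recall that $\Homeo_{\mathrm{par}}(X_A)$ is an inverse semigroup under composition of partial homeomorphisms, with involution given by taking inverses. To show that the set $S$ of realizable partial homeomorphisms is an inverse subsemigroup, we must check that $S$ is closed under composition and under the involution. Closure under the involution is \autoref{lma:NormalizerBasic}(a): if $\alpha\in S$ is realized by an admissible pair $s=(a,b)$, then $\alpha^{-1}=\alpha_{s^\sharp}$ is realized by the reverse $s^\sharp=(b,a)$. Closure under composition is \autoref{lma:NormalizerBasic}(b): if $\alpha,\beta\in S$ are realized by admissible pairs $s$ and $t$ respectively, then the composite $\alpha\circ\beta$ (on its natural domain of definition) equals $\alpha_{st}$, which is realized by the admissible pair $st$. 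Hence $S$ is an inverse subsemigroup of $\Homeo_{\mathrm{par}}(X_A)$; it is moreover nonempty by \autoref{lma:NormalizerBasic}(c), which realizes the identity on $X_A$.
\end{proof}
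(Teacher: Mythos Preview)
Your proof is correct and matches the paper's intended approach: the corollary is stated without proof immediately after \autoref{lma:NormalizerBasic}, as it follows directly from parts (a), (b), and (c) of that proposition exactly as you describe. There is nothing to add.
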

\begin{proof}
 This follows immediately from \autoref{lma:NormalizerBasic}.
\end{proof}

We now use the inverse semigroup of realizable partial homeomorphisms on $X_A$ to construct an \'etale groupoid with unit space $X_A$.

\begin{dfn}[see~\cite{Ren80GrpdApproach}]
\label{df:EtaleGroupoid}
A \emph{topological groupoid} is a topological space $\calG$, endowed with a 
distinguished subset $\calG^{(2)}\subseteq\calG\times\calG$ of 
\emph{composable arrows} and continuous operations $\calG^{(2)}\to\calG$ (composition, denoted $(\gamma,\delta)\mapsto \gamma\delta$) and 
$(\cdot)^{-1}\colon\calG\to\calG$ (inversion, denoted $\gamma\mapsto\gamma^{-1}$) satisfying
\begin{enumerate}
\item
If $(\gamma, \eta)$ and $(\eta,\xi)$ belong to $\calG^{(2)}$, then so do
$(\gamma\eta,\xi)$ and $(\gamma,\eta\xi)$ and we have $(\gamma\eta)\cdot \xi=\gamma\cdot (\eta\xi)$.
\item
For all $\gamma\in\calG$ we have $(\gamma^{-1})^{-1}=\gamma$.
\item
For all $\gamma\in\calG$ we have $(\gamma,\gamma^{-1})\in\calG^{(2)}$.
\item
For every $(\gamma,\eta)\in\calG^{(2)}$ we have $\gamma^{-1}(\gamma\eta)=\eta$ and $(\gamma\eta)\eta^{-1}=\gamma$.
\end{enumerate}

The set $\calGZero:= \{ \gamma\in\calG \colon \gamma=\gamma^{-1}=\gamma^2\}$ is called the \emph{unit space} of $\calG$.
The domain and range maps $\dom,\ran\colon\calG\to\calGZero$ are given by
\[
\dom(\gamma) := \gamma^{-1}\gamma, \andSep 
\ran(\gamma) := \gamma\gamma^{-1}
\]
for all $\gamma\in\calG$. 
The groupoid $\calG$ is \emph{\'etale} if the (automatically continuous) domain and
range maps are local homeomorphisms, and \emph{Hausdorff} if $\calG$ is Hausdorff as a
topological space.
\end{dfn}

Let $X$ be a compact Hausdorff space, and let $S\subseteq\Homeo_{\mathrm{par}}(X)$ be an inverse subsemigroup. 
The \emph{groupoid of germs} $\calG(S)$ of $S$ is defined as follows.
On the set
\[
\big\{ (s,x)\in S\times X \colon s\in S, x\in \dom(s) \big\},
\] 
define an equivalence relation by setting $(s,x)\sim (t,y)$ whenever 
$x=y$ and there exists a neighborhood $U$ of $x$ in $X$ such that $s|_U=t|_U$.
We write $[s,x]$ for the equivalence class of $(s,x)$.
Then the quotient $\calG(S)$ by this equivalence relation has a natural groupoid
structure with $\ran([s,x])=s(x)$ and 
$\dom([s,x])=x$, and operations given by
\[
[s,t\cdot y][t,y]=[st,y], \andSep
[s,x]^{-1}=[s^\sharp, s(x)]
\]
for all $s,t\in S$ and all $x,y\in X$.
With the topology defined by the basic open sets
\[
\mathcal{U}_{U,s,V} = \big\{ [s,x] : x\in U, s(x)\in V \big\},
\]
for $U,V\subseteq X$ open and $s\in S$, the groupoid $\calG(S)$ is \'etale.
The unit space of $\calG(S)$ can be canonically identified with $X$, and is therefore compact and Hausdorff.
For details, we refer to \cite[Section~3]{Ren08Cartan}.

The next definition follows Renault's terminology from \cite[Definition~4.11]{Ren08Cartan}:

\begin{dfn}
\label{df:WeylGroupoid}
Let $A$ be a unital $L^p$-operator algebra.
We define the \emph{Weyl groupoid} of $A$, denoted by $\calG_A$, to be the 
groupoid of germs of the inverse subsemigroup of realizable partial homeomorphisms of $X_A$.
\end{dfn}

The Weyl groupoid of an $L^p$-operator algebra is sometimes too small to carry any useful information. 

\begin{exa}
\label{exa:GroupoidGrpAlgs}
Let $G$ be a discrete group, and let $F^p_\lambda(G)$ be its reduced group $L^p$-operator algebra. 
Then $X_{F^p_\lambda(G)}=\{\ast\}$ by \autoref{prp:coreGpAlg}, 
and thus $\calG_{F^p_\lambda(G)}$ is the trivial groupoid, regardless of $G$.  
\end{exa}

The reason why $\mathcal{G}_A$ remembers so little about 
$A$ in \autoref{exa:GroupoidGrpAlgs} is that the group~$G$, when regarded as a groupoid 
(with $G^{(0)}=\{\ast\}$), has very large stabilizers (or isotropy groups).
The case we will be interested in, namely that of ``small'' stabilizers, is formalized in the following notion.
Given a groupoid $\calG$ and $x\in\calGZero$, the set $x\calG x = \{\gamma\in\calG \colon \ran(\gamma)=\dom(\gamma)=x\}$ is a group, called the \emph{isotropy group} at~$x$.
One says that $x$ has \emph{trivial isotropy} if $x\calG x$ contains only $x$ itself.
The set $\calG' := \{\gamma\in\calG \colon \dom(\gamma)=\ran(\gamma)\}$ is also called the \emph{isotropy bundle}.

\begin{dfn}[{\cite[Definitions~3.4, 3.5]{Ren08Cartan}}]\label{df:topPpal}
An \'etale groupoid $\calG$ is said to be \emph{topologically principal} if the set of points in $\calGZero$ with trivial isotropy is dense in~$\calGZero$;
it is said to be \emph{effective} if the interior of $\calG'$ is $\calGZero$.
\end{dfn}

Let $\calG$ be an \'etale groupoid.
If $\calG$ is topologically principal and Hausdorff, then $\calG$ is effective, and the converse holds under suitable assumptions;
see \cite[Proposition~3.6]{Ren08Cartan}.
The prototypical example of a topologically principal groupoid is the 
transformation groupoid of a topologically free action\footnote{Recall that an action $G\curvearrowright X$ of a discrete group on a topological space $X$ is said to be \emph{topologically free} if for every $g\in G\setminus\{1\}$, the interior of the set $\{x\in X\colon g\cdot x=x\}$ is empty.} of a discrete group.

\begin{rmk}
\label{rem:OpenBisGerms}
(See the beginning of Section~3 of~\cite{Ren08Cartan}.)
Let $\calG$ be an \'etale groupoid, and denote by $\mathcal{S}(\calG)$
the inverse semigroup of its open bisections\footnote{Recall that a subset $S\subseteq \calG$ is said to be an \emph{open bisection} if it is open and the 
restrictions of the source and range maps to $S$ are 
injective.}.
Recall that any $S\in \mathcal{S}(\calG)$ defines a homeomorphism $\beta_S\colon \dom(S)\to \ran(S)$
that satisfies $\beta_S(x)=\ran(Sx)$ for all $x\in \dom(S)$.
Moreover, the induced map $\beta\colon \mathcal{S}(\calG)\to \Homeo_{\mathrm{par}}(\calGZero)$
is an inverse semigroup homomorphism. We let $\mathcal{P}(\calG)$ denote the image of $\beta$. 
By Corollary~3.3 in~\cite{Ren08Cartan}, the groupoid of
germs of $\mathcal{P}(\calG)$ is isomorphic to $\calG$ if and only if $\calG$ is effective.
Moreover, if this is the case, then $\beta$ identifies $\mathcal{S}(\calG)$ bijectively with $\mathcal{P}(\calG)$.
\end{rmk}

Being a groupoid of germs, $\calG_A$ is always effective and \'etale.
For later reference, we record this and other properties of $\calG_A$ in the following proposition.

\begin{prp}
Let $A$ be a unital $L^p$-operator algebra.
Then $\calG_A$ is a locally compact (not necessarily Hausdorff), effective, \'etale groupoid,
and $\calG_A^{(0)}$ is naturally homeomorphic to $X_A$.
\end{prp}
\begin{proof}
It remains to show that $\calG_A$ is locally compact.
This follows using that the range map is a local homeomorphism onto the compact, Hausdorff space $\mathcal{G}_A^{(0)}= X_A$.
\end{proof}

In contrast to \autoref{exa:GroupoidGrpAlgs}, we will show later that when 
$A$ is the reduced $L^p$-groupoid algebra of a topologically principal, Hausdorff groupoid
(in the sense of \cite{GarLup17ReprGrpdLp}; see \autoref{df:GroupoidLpOpAlg} below), 
then $\calG_A$ is a complete invariant for $A$.

\section{Reduced groupoid \texorpdfstring{$L^p$}{Lp}-operator algebras}

Throughout this section, $\calG$ denotes a locally compact, Hausdorff, \'etale groupoid, and $\calGZero$ denotes its unit space.
In this section we recall the construction of the reduced $L^p$-operator algebra of $\calG$ from \cite{GarLup17ReprGrpdLp}, and we prove the basic properties that will be used later on.
Given the absence of $C^*$-algebraic tools such as polar decomposition or continuous functional calculus, we spend some time on technical details.
Our approach here is different from that in \cite{GarLup17ReprGrpdLp}, and
is inspired by the notes of Sims \cite{Sim17arX:EtaleGpds}; see in particular Section~3.3 there. 

We adopt the following notational conventions: for $x\in \calGZero$, we set
\[\calG x=\{\gamma\in\calG\colon \dom(\gamma)=x\} \ \ \mbox{ and } \ \ x\calG=\{\gamma\in\calG\colon \ran(\gamma)=x\}.\]
(Often these sets are denoted in the literature by $\calG_x$ and $\calG^x$, respectively.)

The first step is to define the appropriate convolution algebra.

\begin{dfn}
\label{dfn:convolution}
We denote by $C_c(\calG)$ the space of compactly supported, continuous functions $\calG\to\CC$. 
For $f,g\in C_c(\calG)$, their convolution $f\ast g\colon\calG\to\CC$ is defined by 
\[
(f\ast g)(\gamma)
= \sum_{\sigma\in\calG\dom(\gamma)}f(\gamma\sigma^{-1})g(\sigma)
= \sum_{\tau\in\ran(\gamma)\calG}f(\tau)g(\tau^{-1}\gamma)
\]
for all $\gamma\in\calG$.
Together with pointwise addition and scalar multiplication, this makes $C_c(\calG)$ into a complex algebra.
Given $f\in C_c(\calG)$ and $\gamma\in\calG$, we define $\delta_\gamma\ast f, f\ast\delta_\gamma\colon\calG\to\CC$ by 
\begin{align*}
(\delta_\gamma\ast f)(\sigma)
&= \begin{cases}
f(\gamma^{-1}\sigma), & \text{if} \ran(\sigma)=\ran(\gamma) \\
0, & \text{otherwise},
\end{cases} \\
(f\ast\delta_\gamma)(\sigma)
&= \begin{cases}
f(\sigma\gamma^{-1}), & \text{if} \dom(\sigma)=\dom(\gamma) \\
0, & \text{otherwise},
\end{cases}
\end{align*}
for $\sigma\in\calG$.
\end{dfn}

Given $x\in\calGZero$, since $\calG$ is \'etale, the relative topology on $\calG x$ is discrete.
Therefore, elements in $C_c(\calG x)$ are finite linear combinations of $\delta_\gamma$ for $\gamma\in\calG x$.
Using this, in the following proposition we show that one can define convolution between elements in $C_c(\calG)$ and $\ell^p(\calG x)$.
Recall that the $I$-norm of $f\in C_c(\calG)$ is given by
\[
\|f\|_I 
= \max \Big\{ 
\sup_{x\in\calGZero} \sum_{\sigma\in x\calG} |f(\sigma)|,
\sup_{x\in\calGZero} \sum_{\sigma\in\calG x} |f(\sigma)|
\Big\}.
\]

\begin{prp}
\label{prp:regReprGroupoid}
Fix $x\in \calGZero$.
Let $f\in C_c(\calG)$ and let $\xi\in C_c(\calG x)$.
Then $f\ast\xi$ belongs to $C_c(\calG x)$, and
\[
\| f\ast\xi \|_p \leq \|f\|_I \|\xi\|_p
\]
for every $p\in[1,\infty]$.
It follows that there exists a unique contractive representation $\lambda_x\colon C_c(\calG)\to \Bdd(\ell^p(\calG x))$
satisfying $\lambda_x(f)(\xi)=f\ast \xi$ for $f\in C_c(\calG)$ and $\xi\in C_c(\calG x)$.
\end{prp}
\begin{proof}
Note that $f\ast\xi$ belongs to $C_c(\mathcal{G}x)$. 
Moreover, we have
\[
\| f\ast\xi \|_1 
= \sum_{\gamma\in\calG x} \Big| \sum_{\sigma\in\calG x}f(\gamma\sigma^{-1})\xi(\sigma) \Big|
\leq \sum_{\sigma\in\calG x} |\xi(\sigma)| \sum_{\gamma\in\calG x}  |f(\gamma\sigma^{-1})|
\leq \|f\|_I \|\xi\|_1,
\]
and
\begin{align*}
\| f\ast\xi \|_\infty
&= \sup_{\gamma\in\calG x} \Big| \sum_{\tau\in \ran(\gamma)\calG} f(\tau)\xi(\tau^{-1}\gamma) \Big| \\
&\leq \sup_{\gamma\in\calG x} \sum_{\tau\in \ran(\gamma)\calG} |f(\tau)| \sup_{\tau\in \ran(\gamma)\calG} |\xi(\tau^{-1}\gamma)|  
\leq \|f\|_I \|\xi\|_\infty.
\end{align*}
Thus, the operator $C_c(\calG x)\to C_c(\calG x)$ given by left convolution by $f$ is bounded, and has norm at most $\|f\|_I$ for the $1$- and $\infty$-norm on $C_c(\calG x)$.
Hence, the norm inequality in the statement
follows from the Riesz-Thorin interpolation theorem.
The second assertion in the statement is immediate.
\end{proof}

For $x\in \mathcal{G}^{(0)}$, we call the representation $\lambda_x$, constructed in the proposition above, the \emph{left regular representation} of $\calG$ associated to $x$. From now on, we fix $p\in [1,\infty)$.

\begin{dfn}
\label{df:GroupoidLpOpAlg}
The \emph{reduced groupoid $L^p$-operator algebra} of $\calG$, denoted $F^p_\lambda(\calG)$, is the completion of $C_c(\calG)$ in the norm which for 
$f\in C_c(\calG)$ is given by
\[
\|f\|_\lambda := \sup \big\{ \|\lambda_x(f)\| \colon x\in\calGZero \big\}.
\]
\end{dfn}

Although there is a potential conflict of notation 
with the 
norm introduced in \autoref{df:CrossedProds},
the notation $\|\cdot\|_\lambda$ is standard in 
the field. Since the norms from 
\autoref{df:GroupoidLpOpAlg} and \autoref{df:CrossedProds} are defined on different
objects, it should always be clear which one we refer
to.

The definition above agrees with the one given in
Definition~6.12 of~\cite{GarLup17ReprGrpdLp}; 
see Corollary~6.15 in~\cite{GarLup17ReprGrpdLp} (observing
that what we here call $\lambda_x$ is denoted $\mathrm{Ind}(x)$ in \cite{GarLup17ReprGrpdLp}). 
Note that $\bigoplus_{x\in\calGZero}\lambda_x$ is an isometric representation 
of $F^p_\lambda(\calG)$ on an $L^p$-space, and thus $F^p_\lambda(\calG)$ is an $L^p$-operator algebra.
Moreover, $F^p_\lambda(\calG)$ is unital if $\calGZero$ is compact\footnote{The converse is likely to be true, but to the best of our knowledge this is not known.}. 

Let $p'\in (1,\infty]$ be the dual H\"older exponent of $p$,
which satisfies $\frac{1}{p}+\frac{1}{p'}=1$. For $x\in\calGZero$,
we identify $\ell^{p'}(\calG x)$ with the dual of $\ell^{p}(\calG x)$, and 
for $\xi\in \ell^p(\calG x)$ and $\eta\in \ell^{p'}(\calG x)$ we write $\langle \xi,\eta\rangle$ 
for the pairing given by 
$\langle \xi,\eta\rangle=\sum_{\gamma\in\mathcal{G}x}\xi(\gamma)\eta(\gamma)$.

\begin{lma}
\label{prp:moveDelta}
Let $a\in F^p_\lambda(\calG)$, let $x\in\calGZero$, and let $\sigma,\gamma\in\calG x$.
Then
\[
\left\langle \lambda_x(a)(\delta_\sigma), \delta_\gamma \right\rangle
= \left\langle \lambda_{\ran(\sigma)}(a)(\delta_{\ran(\sigma)}), \delta_{\gamma\sigma^{-1}} \right\rangle.
\]
\end{lma}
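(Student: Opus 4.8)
The identity is essentially a "translation covariance" statement for the family of left regular representations, and the natural approach is to verify it first on $C_c(\calG)$, where everything is an honest finite sum, and then extend by continuity.  Both sides of the claimed identity are bounded linear functionals of $a\in F^p_\lambda(\calG)$ (for fixed $x,\sigma,\gamma$), since $\pi_x$ and $\pi_{\ran(\sigma)}$ are contractive and evaluation against a fixed basis vector and pairing with a fixed basis vector are bounded; hence it suffices to check the equality for $a=f\in C_c(\calG)$.

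\emph{Computation on $C_c(\calG)$.}  For $f\in C_c(\calG)$ we have $\pi_x(f)(\delta_\sigma)=f\ast\delta_\sigma$, and by the formula in \autoref{dfn:convolution},
\[
(f\ast\delta_\sigma)(\gamma)=f(\gamma\sigma^{-1})
\]
whenever $\dom(\gamma)=\dom(\sigma)=x$ (and $0$ otherwise), so the left-hand side equals $f(\gamma\sigma^{-1})$.  On the right-hand side, $\pi_{\ran(\sigma)}(f)(\delta_{\ran(\sigma)})=f\ast\delta_{\ran(\sigma)}$, and evaluating at $\gamma\sigma^{-1}$ (which lies in $\calG\,\ran(\sigma)$, since $\dom(\gamma\sigma^{-1})=\ran(\sigma)$) gives $f(\gamma\sigma^{-1}(\ran(\sigma))^{-1})=f(\gamma\sigma^{-1})$, using that $\ran(\sigma)$ is a unit.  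Thus both sides equal $f(\gamma\sigma^{-1})$, which settles the case $a\in C_c(\calG)$.

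\emph{Passing to the completion.}  Fix a net (or sequence) $f_n\in C_c(\calG)$ with $\|f_n-a\|_\lambda\to 0$.  Since $\|\pi_x(f_n)-\pi_x(a)\|\le\|f_n-a\|_\lambda\to 0$ and similarly for $\pi_{\ran(\sigma)}$, the scalars $\langle\pi_x(f_n)\delta_\sigma,\delta_\gamma\rangle$ converge to $\langle\pi_x(a)\delta_\sigma,\delta_\gamma\rangle$ and $\langle\pi_{\ran(\sigma)}(f_n)\delta_{\ran(\sigma)},\delta_{\gamma\sigma^{-1}}\rangle$ converge to the corresponding quantity for $a$; since the two sequences of scalars agree for every $n$, their limits agree, which is the desired identity.  (Here I use the standard $\ell^p$–$\ell^{p'}$ pairing implicit in the notation $\langle\,\cdot\,,\,\cdot\,\rangle$, under which $|\langle\eta,\delta_\gamma\rangle|\le\|\eta\|_p$.)

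\emph{Main obstacle.}  There is no serious obstacle; the only points requiring a modicum of care are the bookkeeping of domains and ranges (checking that $\gamma\sigma^{-1}\in\calG\,\ran(\sigma)$ so that the right-hand pairing is not automatically zero, and that the "otherwise" cases on both sides match up), and being explicit about which dual pairing is meant so that the continuity argument is clean.  Everything else is a direct application of \autoref{prp:regReprGroupoid} and the definition of $F^p_\lambda(\calG)$.
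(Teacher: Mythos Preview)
Your proof is correct and follows essentially the same approach as the paper: verify the identity for $f\in C_c(\calG)$ by computing both sides as $f(\gamma\sigma^{-1})$ via the convolution formula, then extend to all of $F^p_\lambda(\calG)$ by continuity of both sides in the $\|\cdot\|_\lambda$-norm. The paper's argument is a bit more compressed but identical in content.
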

\begin{proof}
Set $y:=\ran(\sigma)$.
To verify the formula for $C_c(\calG)$, let $f\in C_c(\calG)$.
Then
\[
\left\langle \lambda_x(f)(\delta_\sigma), \delta_\gamma \right\rangle
= \left\langle f\ast\delta_\sigma, \delta_\gamma \right\rangle
= (f\ast\delta_\sigma)(\gamma)
= f(\gamma\sigma^{-1})
= \left\langle \lambda_y(f)(\delta_y), \delta_{\gamma\sigma^{-1}} \right\rangle.
\]
For general elements in $F^p_\lambda(\calG)$, the formula follows since both sides of the equation are continuous in the norm of $F^p_\lambda(\calG)$.
\end{proof}

\begin{lma}
\label{prp:comparisonNorms}
Given $f\in C_c(\calG)$, we have
\[
\|f\|_\infty \leq \|f\|_\lambda \leq \|f\|_I.
\]
Further, we have $\|f\|_\lambda=\|f\|_\infty$ for all $f\in C_c(\calGZero)$.
\end{lma}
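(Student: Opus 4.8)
The statement has two parts: the two-sided inequality $\|f\|_\infty \le \|f\|_\lambda \le \|f\|_I$ for all $f \in C_c(\calG)$, and the equality $\|f\|_\lambda = \|f\|_\infty$ for $f \in C_c(\calGZero)$. First I would dispose of the upper bound $\|f\|_\lambda \le \|f\|_I$: this is immediate from \autoref{prp:regReprGroupoid}, which gives $\|\pi_x(f)\| \le \|f\|_I$ for every $x \in \calGZero$, and one takes the supremum over $x$ in the definition of $\|f\|_\lambda$. For the lower bound $\|f\|_\infty \le \|f\|_\lambda$, fix $\gamma_0 \in \calG$ and set $x = \dom(\gamma_0)$, $y = \ran(\gamma_0)$. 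The idea is to test $\pi_x(f)$ against the point mass $\delta_x \in \ell^p(\calG x)$ (note $x \in \calG x$ since $\dom(x) = x$). Using \autoref{prp:moveDelta} with $\sigma = x$ (so $\ran(\sigma) = x$, $\sigma^{-1} = x$), we get $\langle \pi_x(f)(\delta_x), \delta_{\gamma_0}\rangle = \langle \pi_x(f)(\delta_x), \delta_{\gamma_0}\rangle$ — more directly, $(f \ast \delta_x)(\gamma_0) = f(\gamma_0 x) = f(\gamma_0)$ by the formula for $f \ast \delta_\gamma$ (here $\dom(\gamma_0) = \dom(x) = x$, so the nonzero case applies). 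Hence the $\gamma_0$-coordinate of $\pi_x(f)(\delta_x)$ equals $f(\gamma_0)$, so $\|\pi_x(f)(\delta_x)\|_p \ge |f(\gamma_0)|$, while $\|\delta_x\|_p = 1$. Therefore $\|\pi_x(f)\| \ge |f(\gamma_0)|$, and taking the supremum over $\gamma_0 \in \calG$ (equivalently over all such $x$) yields $\|f\|_\lambda \ge \|f\|_\infty$.

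\textbf{The equality on $C_c(\calGZero)$.} By the inequality just proved it suffices to show $\|f\|_\lambda \le \|f\|_\infty$ when $f$ is supported on the unit space. For such $f$ and for $x \in \calGZero$, I would compute the convolution explicitly: for $\xi \in C_c(\calG x)$ and $\gamma \in \calG x$,
\[
(f \ast \xi)(\gamma) = \sum_{\sigma \in \calG x} f(\gamma \sigma^{-1}) \xi(\sigma),
\]
and $f(\gamma\sigma^{-1})$ is nonzero only when $\gamma\sigma^{-1} \in \calGZero$, i.e. $\gamma = \sigma$ (since then $\dom(\gamma\sigma^{-1}) = \ran(\sigma^{-1}) = \dom(\sigma)$ forces the arrow to be a unit iff $\gamma$ and $\sigma$ have the same range and domain; for $\gamma, \sigma \in \calG x$ this means $\gamma\sigma^{-1} = \ran(\gamma) \in \calGZero$ always, but it equals a unit on which $f$ can be supported only when... ). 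Let me be more careful here: for $\gamma, \sigma \in \calG x$, the product $\gamma\sigma^{-1}$ lies in $\ran(\gamma)\calG\ran(\sigma)$, and it is a \emph{unit} precisely when $\gamma = \sigma$. So $(f\ast\xi)(\gamma) = f(\ran(\gamma))\,\xi(\gamma)$, which shows $\pi_x(f)$ is the diagonal multiplication operator on $\ell^p(\calG x)$ by the bounded function $\gamma \mapsto f(\ran(\gamma))$. Its operator norm is $\sup_{\gamma \in \calG x} |f(\ran(\gamma))| \le \|f\|_\infty$. Taking the supremum over $x \in \calGZero$ gives $\|f\|_\lambda \le \|f\|_\infty$, completing the proof.

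\textbf{Main obstacle.} The only genuinely delicate point is the bookkeeping with domains and ranges in the convolution formulas — in particular verifying that for $\gamma,\sigma \in \calG x$ the element $\gamma\sigma^{-1}$ is a unit iff $\gamma = \sigma$, and that $\delta_x$ is a legitimate unit vector in $\ell^p(\calG x)$ (which it is, since $x$ itself, viewed as an arrow, lies in $\calG x$ because $\dom(x) = x$). Everything else is a direct application of \autoref{prp:regReprGroupoid} and \autoref{prp:moveDelta}, together with the definitions of $\|f\|_\lambda$ and $\|f\|_I$; no interpolation or functional-analytic input beyond what is already in the excerpt is needed.
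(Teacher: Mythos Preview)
Your proof is correct and, for the two-sided inequality, proceeds exactly as the paper does: the upper bound is immediate from \autoref{prp:regReprGroupoid}, and the lower bound comes from testing $\pi_x(f)$ against $\delta_x$ and reading off the $\gamma$-coordinate.

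For the equality $\|f\|_\lambda=\|f\|_\infty$ on $C_c(\calGZero)$ you take a slightly different route than the paper. The paper simply observes that for $f\in C_c(\calGZero)$ one has $\|f\|_I=\|f\|_\infty$ (since each source or range fibre meets $\calGZero$ in exactly one point), and then the already-established inequality $\|f\|_\infty\le\|f\|_\lambda\le\|f\|_I$ collapses. You instead compute directly that $\pi_x(f)$ acts on $\ell^p(\calG x)$ as the diagonal multiplication $\xi\mapsto \big(\gamma\mapsto f(\ran(\gamma))\,\xi(\gamma)\big)$, with operator norm at most $\|f\|_\infty$. Both arguments are valid; the paper's is a one-line shortcut, while yours makes the action of $C(\calGZero)$ on $\ell^p(\calG x)$ explicit (which is itself a useful observation). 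Your groupoid bookkeeping --- that $\gamma\sigma^{-1}\in\calGZero$ for $\gamma,\sigma\in\calG x$ forces $\gamma=\sigma$ --- is correct as you eventually state it.
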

\begin{proof}
Let $f\in C_c(\calG)$.
Since
$\|\lambda_x(f)\|\leq\|f\|_I$ for all $x\in\calGZero$
by \autoref{prp:regReprGroupoid},
the second inequality follows.
To show the first inequality, let $\gamma\in\calG$.
We need to verify that $|f(\gamma)|\leq\|f\|_\lambda$.
Set $x:=\dom(\gamma)$.
Then
\[
\|f\|_\lambda
\geq \|\lambda_x(f)\|
\geq \|\lambda_x(f)(\delta_x)\|_p
= \big\| \sum_{\sigma\in\calG x} f(\sigma^{-1}) \big\|_p
\geq |f(\gamma)|.
\]

Lastly, if $f\in C_c(\calGZero)$, then $\|f\|_\infty=\|f\|_I$ and the statement follows.
\end{proof}

\begin{ntn}
\label{ntn:mapJ}
By the first inequality in \autoref{prp:comparisonNorms}, 
the identity on $C_c(\calG)$ extends to a contractive
linear map $j\colon F^p_\lambda(\calG) \to C_0(\calG)$.
We abbreviate $j(a)$ to $j_a$ for $a\in F^p_\lambda(\calG)$.
\end{ntn}

\begin{prp}
\label{prp:mapJ}
The map $j\colon F^p_\lambda(\calG) \to C_0(\calG)$ is injective and we have
\[
j_a(\gamma)
= \left\langle \lambda_{\dom(\gamma)}(a)(\delta_{\dom(\gamma)}), \delta_\gamma \right\rangle
\]
for every $a\in F^p_\lambda(\calG)$ and $\gamma\in\calG$.
\end{prp}
\begin{proof}
The proof is based on the proof of \cite[Proposition~3.3.3]{Sim17arX:EtaleGpds}.
To verify the displayed formula for $j$, let $f\in C_c(\calG)$ and let $\gamma\in\calG$.
Set $x:=\dom(\gamma)$.
Then 
\[
\left\langle \lambda_x(f)(\delta_{x}), \delta_\gamma \right\rangle
= \left\langle f\ast\delta_{x}, \delta_\gamma \right\rangle
= f(\gamma)
= j_f(\gamma),
\]
as desired.
Since both sides of the equation are continuous with respect to the norm in $F^p_\lambda(\calG)$, we obtain the same formula for all elements in $F^p_\lambda(\calG)$.

To show injectivity of $j$, let $a\in F^p_\lambda(\calG)$ with $a\neq 0$.
Choose $x\in\calGZero$ such that $\lambda_x(a)\neq 0$.
Then choose $\sigma,\tau\in\calG x$ such that $\langle \lambda_x(a)\delta_\sigma, \delta_\tau \rangle \neq 0$.
Set $y:=\ran(\sigma)$.
Using \autoref{prp:moveDelta} at the second step, we obtain
\[
j_a(\tau\sigma^{-1})
= \left\langle \lambda_y(a)(\delta_{y}), \delta_{\tau\sigma^{-1}} \right\rangle
= \langle \lambda_x(a)(\delta_\sigma), \delta_\tau \rangle \neq 0
\]
and thus $j$ is injective.
\end{proof}


\begin{lma}
\label{prp:mapsLR}
Let $p'\in(1,\infty]$ be the H\"older exponent that is dual to $p$, and let $x\in\calGZero$.
For $a\in F^p_\lambda(\calG)$, we write $\lambda_x(a)^{\prime}\colon\ell^{p}(\calG x)^{\prime}\to \ell^{p}(\calG x)^{\prime}$ for the transpose of $\lambda_x(a)$.
Define contractive linear maps $\ell_x\colon F^p_\lambda(\calG)\to\ell^p(\calG x)$ and $r_x\colon F^p_\lambda(\calG)\to\ell^{p'}(\calG x)$ by
\[
\ell_x(a) := \lambda_x(a)(\delta_x), \andSep
r_x(a) := \lambda_x(a)^{\prime}(\delta_x),
\]
for $a\in F^p_\lambda(\calG)$.
Then $\ell_x(a)(\gamma) = j_a(\gamma)$ and $r_x(a)(\gamma) = j_a(\gamma^{-1})$,
for all $a\in F^p_\lambda(\calG)$ and $\gamma\in\calG x$.
\end{lma}
\begin{proof}
Let $a\in F^p_\lambda(\calG)$ and $\gamma\in\calG x$.
Using \autoref{prp:mapJ} at the last step, we get
\[
\ell_x(a)(\gamma)
= \left\langle \lambda_x(a)(\delta_x),\delta_\gamma \right\rangle
= j_a(\gamma).
\]
Using \autoref{prp:moveDelta} at the third step
and \autoref{prp:mapJ} at the last one, we also get
\begin{align*}
r_x(a)(\gamma)
&= \left\langle \lambda_x(a)'(\delta_x),\delta_\gamma \right\rangle
= \left\langle \lambda_x(a)(\delta_\gamma),\delta_x \right\rangle \\
&= \left\langle \lambda_{\dom(\gamma)}(a)(\delta_{\dom(\gamma)}),\delta_{\gamma^{-1}} \right\rangle
= j_a(\gamma^{-1}).\qedhere
\end{align*}
\end{proof}


Given $f\in C_c(\calG)$ and $\gamma\in\calG$, it 
is easy to check that
$\|\delta_\gamma\ast f\|_\lambda \leq \|f\|_\lambda$
and
$\|f\ast\delta_\gamma\|_\lambda \leq \|f\|_\lambda$.
In particular, it follows that left and right convolution by $\delta_\gamma$ extend to contractive, linear maps $F^p_\lambda(\calG)\to F^p_\lambda(\calG)$.

\begin{prp}
\label{prp:JConv}
Let $a,b\in F^p_\lambda(\calG)$ and $\gamma\in\calG$.
Set $x:=\dom(\gamma)$.
Then
\[
j_{a\ast b}(\gamma)
= \left\langle r_x(\delta_{\gamma^{-1}}\ast a), \ell_x(b) \right\rangle
= \sum_{\sigma\in\calG x} j_a(\gamma\sigma^{-1}) j_b(\sigma),
\]
and the sum is absolutely convergent.
\end{prp}
\begin{proof}
The proof is based on the proof of \cite[Proposition~III.4.2]{Ren80GrpdApproach}. Fix $\sigma\in \calG x$.

\textbf{Claim~1:}
\emph{We have $r_x(\delta_{\gamma^{-1}}\ast a)(\sigma)= j_a(\gamma\sigma^{-1})$.}
For $f\in C_c(\calG)$, we get
\begin{align*}
r_x(\delta_{\gamma^{-1}}\ast f)(\sigma)
&= \left\langle \lambda_x(\delta_{\gamma^{-1}}\ast f)'(\delta_x), \delta_\sigma \right\rangle 
= \left\langle \delta_x, \lambda_x(\delta_{\gamma^{-1}}\ast f)(\delta_\sigma) \right\rangle \\
&= \left\langle  \delta_x, \delta_{\gamma^{-1}}\ast f\ast \delta_\sigma \right\rangle
= f(\gamma\sigma^{-1}) = j_f(\gamma\sigma^{-1}).
\end{align*}
Now the claim follows since $C_c(\calG)$ is dense in $F^p_\lambda(\calG)$.

\textbf{Claim~2:}
\emph{We have $\lambda_x(a)'(\delta_\gamma)(\sigma) = j_a(\gamma\sigma^{-1})$.}
Using \autoref{prp:moveDelta} at the third step, we get
\begin{align*}
\lambda_x(a)'(\delta_\gamma)(\sigma)
&= \left\langle \lambda_x(a)'(\delta_\gamma), \delta_\sigma \right\rangle= \left\langle \delta_\gamma, \lambda_x(a)(\delta_\sigma) \right\rangle \\
&= \left\langle \lambda_{\ran(\sigma)}(a)(\delta_{\ran(\sigma)}), \delta_{\gamma\sigma^{-1}} \right\rangle=j_a(\gamma\sigma^{-1}).
\end{align*}

It follows from Claims~1 and~2 that $\lambda_x(a)'(\delta_\gamma)=r_x(\delta_{\gamma^{-1}}\ast a)$, and therefore
\begin{align*}
j_{a\ast b}(\gamma)
&= \left\langle \lambda_x(a)[\lambda_x(b)(\delta_x)], \delta_\gamma \right\rangle \\
&= \left\langle \lambda_x(a)'(\delta_\gamma), \lambda_x(b)(\delta_x) \right\rangle\\
&= \left\langle r_x(\delta_{\gamma^{-1}}\ast a), \ell_x(b) \right\rangle,
\end{align*}
which proves the first equality.
Now the second equality follows from Claim~1 and \autoref{prp:mapsLR}.
Moreover, the sum is absolutely convergent since it is given by the pairing between $\ell^p(\calG x)$ and its dual $\ell^{p'}(\calG x)$.
\end{proof}

\begin{ntn}
The inclusion $C_c(\calGZero)\subseteq C_c(\calG)$ extends to an isometric, multiplicative map $C_0(\calGZero)\to F^p_\lambda(\calG)$, which we use to identify $C_0(\calGZero)$ with a closed subalgebra of $F^p_\lambda(\calG)$.
We let $E\colon F^p_\lambda(\calG) \to C_0(\calGZero)$ denote the composition of $j$ followed by the restriction $C_0(\calG)\to C_0(\calGZero)$.
\end{ntn}

\begin{prp}
\label{prp:condExp}
The map $E$ defined above is contractive and satisfies 
$E(f)=f$ and $E(fag)=fE(a)g$ 
for all $f,g\in C_0(\calGZero)$ and $a\in F^p_\lambda(\calG)$.
In particular, if $\calGZero$ is compact, then $E$ is a conditional expectation in the sense of \autoref{df:CondExp}.
\end{prp}
\begin{proof}
We have $E(f)=f$ for every $f\in C_c(\calGZero)$, which implies the same for elements in $C_0(\calGZero)$.
Now, let $f,g\in C_0(\calGZero)$, let $a\in F^p_\lambda(\calG)$, and let $x\in\calGZero$.
Using \autoref{prp:JConv} at the second and fourth step, and using at the third and fifth step that $j_g(\sigma)=0$ and $j_f(\sigma^{-1})=0$ unless $\sigma=\dom(\sigma)=\ran(\sigma)$, we get
\begin{align*}
E(fag)(x)
&= j_{fag}(x)
= \sum_{\sigma\in\calG x} j_{fa}(\sigma^{-1}) j_g(\sigma)\\
&= j_{fa}(x) g(x) 
= \sum_{\sigma\in\calG x} j_f(\sigma^{-1}) j_a(\sigma) g(x)\\
&= f(x) j_a(x) g(x) 
= [fE(a)g](x),
\end{align*}
as desired. Since $E(1)=1$ when $\calGZero$ is compact, the last assertion follows.
\end{proof}

\section{\texorpdfstring{$L^p$}{Lp}-rigidity of reduced groupoid algebras}

The main result of this section, \autoref{thm:GroupoidRigidity}, asserts that if $\calG$ is a topologically principal, Hausdorff, \'etale groupoid with compact unit space and $p\in [1,\infty)\setminus\{2\}$, then the Weyl groupoid of its reduced $L^p$-operator algebra is naturally isomorphic to $\calG$. 
This reveals a stark contrast with the case of \ca{s}, and further implications of this rigidity phenomenon will be given in Sections~\ref{sec:RigidDynSys} and~\ref{sec:tensCuntz}. 

It should be noted that virtually all concrete families of $L^p$-operator algebras that have been systematically studied so far can be realized as the $L^p$-operator algebras of \'etale groupoids;
see \cite{GarLup17ReprGrpdLp}.
Thus, adopting the groupoid perspective allows one to prove results about vast classes of algebras with a unified argument. 

\begin{prp}
\label{prp:coreGpd}
Let $\calG$ be a Hausdorff, \'etale groupoid with compact unit space, and let $p\in [1,\infty)\setminus\{2\}$.
Then $\core(F^p_\lambda(\calG))=C(\calGZero)$.
\end{prp}
\begin{proof}
Let $a\in\core(F^p_\lambda(\calG))$. 
With $j\colon F^p_\lambda(\calG)\to C_0(\calG)$ defined as in \autoref{ntn:mapJ}, we will show that the support of $j_a$ is contained in~$\calGZero$. Once this is accomplished, it follows that $j_a$ belongs
to $C(\calGZero)$. Moreover, $a$ and $j_a$ are two elements in $F^p_\lambda(\calG)$ whose images under $j$ agree. Since $j$ is injective
by \autoref{prp:mapJ}, it follows that $a=j_a$ and hence $a$ belongs to 
$C(\calGZero)$.

Given $x\in\calGZero$, the homomorphism $\lambda_x\colon F^p_\lambda(\calG) \to \Bdd(\ell^p(\calG_x))$ from \autoref{prp:regReprGroupoid} 
is unital and contractive.
It follows from \autoref{prp:CorePreserved} (and \autoref{exa:BLp}) that $\lambda_x(a)$ is the multiplication operator in $\Bdd(\ell^p(\calG_x))$ given by some element in $\ell^\infty(\calG_x)$.
In particular, we have $\lambda_x(a)\delta_x = c \delta_x$ for some $c\in\CC$.

Let $\gamma\in\calG\setminus\calGZero$, so that 
$\gamma\neq \dom(\gamma)$.
Using this at the last step, and using \autoref{prp:mapJ} at the first step, we obtain (for some $c\in\CC$) that
\[
j_a(\gamma)
= \left\langle \lambda_{\dom(\gamma)}(a)\delta_{\dom(\gamma)}, \delta_\gamma \right\rangle
= \left\langle c \delta_{\dom(\gamma)}, \delta_\gamma \right\rangle
= 0. \qedhere
\]
\end{proof}

The computation of the following $C^*$-cores is an immediate consequence of \autoref{prp:coreGpd}.
We refer the reader to \cite{Phi13arX:LpCrProd} for the definition of the spatial $L^p$-UHF-algebras, and 
to~\cite{CorGui19LpOpAlgGraphs} for the definition of $L^p$-graph algebras.

\begin{exas}
\label{exa:Cores}
Fix $p\in [1,\infty)\setminus\{2\}$.

(1)
For $n\in\NN$, we have $\core(M^p_n)=\CC^n$, identified as the diagonal matrices.
More generally, if $D$ is the $L^p$ UHF-algebra of type $2^{n_1}3^{n_3}\cdots q^{n_q}\cdots$,
then $\core(D)$ can be canonically identified with the continuous functions on the Cantor
space $\prod_{q \ \mathrm{ prime}} \prod_{j=1}^{n_q}\{1,\ldots,q\}$.
A similar description can be obtained for AF-algebras in terms of their Bratteli diagrams.

(2)
If $Q$ is a finite directed graph, then $\core(\mathcal{O}^p(Q))$ can be canonically 
identified with $\overline{\mathrm{span}}\{s_\mu s_\mu^*\colon \mu \mbox{ path in } Q\}$.
In particular, for the $L^p$-Cuntz algebra $\Onp$ (see \autoref{dfn:Ln}
and the comments after it), the spectrum of $\core(\Onp)$,
for $n\geq 2$, can be canonically identified with the Cantor space.
\end{exas}

We now proceed to relate two classes of partial homeomorphisms on $\calGZero$:
the ones that are realized by admissible pairs in $F^p_\lambda(\calG)$
(\autoref{df:Normalizer}), and the ones that are induced by open bisections of $\calG$ (\autoref{rem:OpenBisGerms}).

Given a topological space $X$ and a continuous function $h\colon X\to\RR$, we use $\supp(h)$ to denote the open support of $h$, that is, $\supp(h)=\{x\in h:f(x)\neq 0\}$.
An open subset $U\subseteq X$ is called a \emph{cozero set} if there exists a continuous function $h\colon X\to\RR$ such that $U=\supp(h)$. If $X$ is normal (meaning
that disjoint closed subsets can be separated with disjoint open sets), then an open
set is cozero if and only if it is $F_\sigma$. In particular, every open set in a
compact, metric space is cozero. In general, every open set in a normal space 
contains a cozero open subset.

\begin{prp}
\label{prop:partHomeoOfAdmisPair}
Let $\calG$ be a Hausdorff, \'etale groupoid with compact unit space, let $p\in [1,\infty)\setminus\{2\}$, and let $S$ be an open bisection of $\calG$ with associated partial homeomorphism $\beta_S\colon\dom(S)\to\ran(S)$.
Let $U\subseteq\calGZero$ be a cozero set.
Then the restriction of $\beta_S$ to $U$ is realizable by an admissible pair in $F^p_\lambda(\calG)$.
\end{prp}
\begin{proof}
Replacing $S$ by $\{\gamma\in S : \dom(\gamma)\in U\}$, we may assume that $\dom(S)=U$.
Choose $h\in C(\calGZero)_+$ with $U=\supp(h)$ and define $a,b\colon\calG\to\CC$ by
\[
a(\gamma)=
\begin{cases*}
h(\dom(\gamma)), & if $\gamma\in S$ \\
0, & otherwise
\end{cases*}  
\ \ \mbox{ and} \ \ 
b(\gamma)=
\begin{cases*}
h(\ran(\gamma)), & if $\gamma^{-1}\in S$ \\
0, & otherwise
\end{cases*}
\]
for all $\gamma\in\calG$.
Then $a$ and $b$ belong to $F^p_\lambda(\calG)$ since they are $I$-norm limits of elements in $C_c(S)$.

We show that $s=(a,b)$ is an admissible pair that realizes $\beta_S\colon \dom(S)\to \ran(S)$.
To simplify the notation, we will omit the map $j$ from \autoref{ntn:mapJ} and identify elements in $F^p_\lambda(\calG)$ with their images in $C_0(\calG)$.
To check the first condition in
\autoref{df:Normalizer}, let $f\in C(\calGZero)_+$ and let $\gamma\in\calG$.
Then
\[
bfa(\gamma)
= \sum_{\sigma\in\calG\dom(\gamma)} b(\gamma\sigma^{-1})f(\ran(\sigma))a(\sigma).
\]
If $bfa(\gamma)\neq 0$, then there is $\sigma\in\calG$ with $b(\gamma\sigma^{-1})f(\ran(\sigma))a(\sigma)\neq 0$, which implies that $\sigma\in S$ and $\sigma\gamma^{-1}=(\gamma\sigma^{-1})^{-1}\in S$, and since $S$ is a bisection we get $\gamma^{-1}=\dom(\sigma)$, that is, $\gamma\in\calGZero$.
Thus, the support of $bfa$ is contained in $\calGZero$, and it follows that $bfa \in C(\calGZero)_+$, as desired.
Analogously, one obtains $afb\in C(\calGZero)_+$.

Now we check the second condition.
The first condition implies that $ba$ and $ab$ belong to $C(\calGZero)_+$.
Given $x\in\dom(S)$, let $\sigma_0\in S$ be the unique element satisfying $\dom(\sigma_0)=x$.
Then
\[
ba(x)
= \sum_{\sigma\in\calG x} b(\sigma^{-1})a(\sigma)
= b(\sigma_0^{-1})a(\sigma_0)
= h(x)^2.
\]
If $x\in\calGZero\setminus\dom(S)$, then $ba(x)=0$.
Thus $ba=h^2$, which implies that
\[
\dom(S) = \supp(h) 
= \supp(h^2)
= \supp(ba).
\]
Similarly one shows that $ab(\beta_S(x))=h(x)^2$ for $x\in\dom(S)$ and $ab(\beta_S(x))=0$ for $x\in\calGZero\setminus\dom(S)$, and thus
\[
\ran(S) = \beta_S(\dom(S))
= \supp(ab).
\]

To check the third condition, let $x\in U_s=\dom(S)$ and $f\in C_0(V_s)=C_0(\ran(S))$.
Let $\sigma_0\in S$ be the unique element with $\dom(\sigma_0)=x$.
Then $\ran(\sigma_0)=\beta_S(x)$, so
\begin{align*}
bfa(x)
&= \sum_{\sigma\in\calG x} b(\sigma^{-1})f(\ran(\sigma))a(\sigma)
= b(\sigma_0^{-1})f(\ran(\sigma_0))a(\sigma_0) \\
&= h(x)^2 f(\beta_S(x))
= f(\beta_S(x))ba(x).
\end{align*}
Analogously, one shows that $agb(y)=g(\beta_S^{-1}(y))ab(y)$ for 
$y\in V_s$ and $g\in C_0(U_s)$.
It follows that $s=(a,b)$ is an admissible pair that realizes $\beta_S$.
\end{proof}

Next, we show that for a groupoid as in the previous lemma, which is moreover
\emph{topologically principal} (\autoref{df:topPpal}), any admissible pair naturally determines an open bisection such that the respective induced partial homeomorphisms on $\calGZero$ agree.

\begin{prp}
\label{prop:partHomeoOfBisection}
Let $\calG$ be a topologically principal, Hausdorff, \'etale groupoid with compact unit space,
let $p\in [1,\infty)\setminus\{2\}$, and let $s=(a,b)$ be an admissible pair in 
$F^p_\lambda(\calG)$.
Set
\[
S := \big\{ \gamma\in\calG \colon a(\gamma), b(\gamma^{-1})\neq 0 \big\}.
\]
Then $S$ is an open bisection in $\calG$ such that $\beta_{S}=\alpha_s$.
\end{prp}
\begin{proof} 
To simplify the notation, we will omit the map $j$ from \autoref{ntn:mapJ} and identify elements in $F^p_\lambda(\calG)$ with their images in $C_0(\calG)$.

\textbf{Claim~1:}
\emph{Let $\gamma\in\calG$. Then $a(\gamma)b(\gamma^{-1})\geq 0$.}
Arguing by contradiction, assume that $a(\gamma)b(\gamma^{-1})<0$.
Since $a$ and $b$ are continuous (when viewed as functions on $\calG$), we can choose an open neighborhood $U$ of $\gamma$ such that $a(\sigma)b(\sigma^{-1})<0$ for all $\sigma\in U$.
Set $V:=\dom(U)$, which is an open subset of $\calGZero$.
Since $\calG$ is topologically principal, there is $x_0\in V$ with trivial isotropy.
Fix $\sigma_0\in U$ with $\dom(\sigma_0)=x_0$ and set $y_0:=\ran(\sigma_0)$.
Since $x_0$ has trivial isotropy, $\sigma_0$ is the unique element in $\calG x$ with range $y_0$.
Since
\[
ba(x)=\sum_{\sigma\in\calG x_0} b(\sigma^{-1})a(\sigma)
\]
converges absolutely, the set $\{\sigma\in\calG x_0\colon b(\sigma^{-1})a(\sigma)\neq 0\}$ is at most countable.
Set $t:=b(\sigma_0^{-1})a(\sigma_0)<0$.
Choose a neighborhood $W$ of $y_0$ in $\calGZero$ such that
\[
\sum_{\sigma\in\calG x_0, \ran(\sigma)\in W\setminus\{y_0\}} |b(\sigma^{-1})a(\sigma)| < |t| = -t.
\]
Choose $f\in C_0(\calGZero)$ with $0\leq f\leq 1$, with $f(y_0)=1$, and such that the support of $f$ is contained in $W$.
Then
\begin{align*}
bfa(x_0)
&= \sum_{\sigma\in\calG x_0} b(\sigma^{-1})f(\ran(\sigma))a(\sigma) \\
&= b(\sigma_0^{-1})f(\ran(\sigma_0))a(\sigma_0) + \sum_{\sigma\in\calG x_0, \ran(\sigma)\in W\setminus\{y_0\}} b(\sigma^{-1})f(\ran(\sigma))a(\sigma) \\
&\leq t + \sum_{\sigma\in\calG x_0, \ran(\sigma)\in W\setminus\{y_0\}} |b(\sigma^{-1})a(\sigma)|
< 0,
\end{align*}
which contradicts condition (1) in \autoref{df:Normalizer}.

\textbf{Claim~2:}
\emph{Let $\gamma\in S$. Then $\dom(\gamma)\in U_s$ and $\ran(\gamma)\in V_s$.}
Set $x:=\dom(\gamma)$ and $y:=\ran(\gamma)$.
Applying Claim~1 at the second step, we get
\[
ba(x) = \sum_{\sigma\in\calG x} b(\sigma^{-1})a(\sigma) \geq b(\gamma^{-1})a(\gamma) > 0,
\]
which by condition (2) in \autoref{df:Normalizer} implies that $x\in U_s$.
Analogously, we have
\[
ab(y) = \sum_{\sigma\in y\calG} a(\sigma)b(\sigma^{-1}) \geq a(\gamma)b(\gamma^{-1}) > 0,
\]
which implies that $y\in V_s$.

\textbf{Claim~3:}
\emph{Let $\gamma\in S$, and set $x:=\dom(\gamma)$. Then $\ran(\gamma)=\alpha_s(x)$.}
Assume that $\ran(\gamma)\neq \alpha_s(x)$.
Choose $f\in C_0(V_s)_+$ with $f(\alpha_s(x))=0$ and $f(\ran(\gamma))=1$.
Using the third condition in \autoref{df:Normalizer} at the second step, we get
\[
0 = f(\alpha_s(x))
= \frac{bfa(x)}{ba(x)}
= \sum_{\sigma\in\calG x} \frac{b(\sigma^{-1})f(\ran(\sigma))a(\sigma)}{ba(x)} 
\geq \frac{b(\gamma^{-1})a(\gamma)}{ba(x)} >0.
\]
This contradiction proves the claim.

\smallskip
Consider  the set
\[
T := \big\{ \eta\in\calG \colon \dom(\eta)\in U_s \mbox{ and } \ran(\eta)=\alpha_s(\dom(\eta)) \big\}.
\]
We have shown that $S\subseteq T$, and hence
\[
SS^{-1}\subseteq TT^{-1}\subseteq\calG' := \big\{ \gamma\in\calG \colon \ran(\gamma)=\dom(\gamma) \big\}.
\]
Denote by $\iota\colon\calG\to\calG$ the inversion
map, which is continuous.
Since $a$ and $b\circ\iota$ are continuous (as functions on $\calG$), their supports are open subsets, 
and hence $S=\supp(a)\cap \supp(b\circ\iota)$ is also open in $\calG$.
Therefore the open set $SS^{-1}$ is contained in the interior of $\calG'$, which equals $\calGZero$ since $\calG$ is topologically principal and thus effective;
see \cite[Proposition~3.6]{Ren08Cartan}.
An analogous reasoning implies that $S^{-1}S$ is contained in~$\calGZero$, and so $S$ is an open bisection.

\textbf{Claim~4:}
\emph{Let $x\in U_s$. Then there exists $\gamma\in S$ with $\dom(\gamma)=x$.}
We have
\[
0<ba(x)=\sum_{\sigma\in\calG x} b(\sigma^{-1})a(\sigma)
\]
which implies that there is $\gamma\in\calG x$ with $b(\gamma^{-1})a(\gamma)>0$.
Then $\dom(\gamma)=x$ and $\gamma\in S$, as desired.

\smallskip
It follows from Claim~4 that $\dom(S)=U_s$, and it remains to verify $\beta_S=\alpha_s$.
Let $x\in U_s$ and let $\gamma\in S$ be the unique element with $\dom(\gamma)=x$.
By Claim~3, we have
$\beta_S(x) = \ran(\gamma) = \alpha_s(x)$,
as desired.
\end{proof}

The following is the main result of this section. 
It shows that a large class of groupoids can be recovered from their reduced groupoid $L^p$-operator algebras.

\begin{thm}
\label{thm:GroupoidRigidity}
Let $\calG$ be a topologically principal, Hausdorff, \'etale groupoid with compact unit space,
and let $p\in [1,\infty)\setminus\{2\}$.
Then there is a natural identification
\[
\calG_{F^p_\lambda(\calG)}\cong\calG.
\]
\end{thm}
\begin{proof}
We identify $C(\calGZero)$ with the $C^*$-core of $F^p_\lambda(\calG)$ as in \autoref{prp:coreGpd}.
Let $\mathcal{A}$ be the set of partial homeomorphisms of $\calGZero$ realized by admissible pairs in $F^p_\lambda(\calG)$. 
Further, let $\mathcal{B}$ be the family of partial homeomorphisms of $\calGZero$ induced by open bisections of $\calG$.

By \autoref{prop:partHomeoOfBisection}, we have $\mathcal{A}\subseteq\mathcal{B}$.
Applying \autoref{prop:partHomeoOfAdmisPair}, the converse inclusion holds if every open subset of $\calGZero$ is a cozero set (for example, if $\calGZero$ is metrizable), and in general it holds \emph{locally}, that is, for every $\beta\in\mathcal{B}$ and $x\in\calGZero$ there exists an open neighborhood~$U$ of $x$ such that $\beta|_U\in\mathcal{A}$.
It follows that the groupoids of germs of $\mathcal{A}$ and $\mathcal{B}$ are naturally isomorphic.

By definition, $\calG_{F^p_\lambda(\calG)}$ is the groupoid of germs of $\mathcal{A}$.
Since $\calG$ is isomorphic to the groupoid of germs of $\mathcal{B}$ (by \autoref{rem:OpenBisGerms}), the result follows.
\end{proof}

\begin{cor}
Let $\calG$ and $\mathcal{H}$ be topologically principal, Hausdorff, \'etale groupoids with compact unit spaces, 
and let $p\in [1,\infty)\setminus\{2\}$. 
Then there is an isometric isomorphism $F^p_\lambda(\calG)\cong F^p_\lambda(\mathcal{H})$ 
if and only if there is a groupoid isomorphism $\calG\cong \mathcal{H}$.
\end{cor}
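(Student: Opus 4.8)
The plan is to deduce the corollary immediately from \autoref{thm:GroupoidRigidigy} together with the functoriality of the Weyl groupoid under isometric isomorphisms. The two directions require very different amounts of work, and the ``hard'' direction is already essentially done by the main theorem.

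\textbf{The easy direction.} Suppose $\Phi\colon \calG\to\mathcal{H}$ is a groupoid isomorphism. Since both groupoids are Hausdorff, \'etale, and have compact unit space, $\Phi$ induces a homeomorphism $\calGZero\to\mathcal{H}^{(0)}$ and carries $C_c(\calG)$ bijectively onto $C_c(\mathcal{H})$ by $f\mapsto f\circ\Phi^{-1}$; this is an algebra isomorphism for the convolution products (the convolution formula in \autoref{dfn:convolution} is manifestly natural in the groupoid). For each $x\in\calGZero$ the map $\Phi$ restricts to a bijection $\calG x\to\mathcal{H}\Phi(x)$, so it intertwines the left regular representations $\pi_x$ and $\pi_{\Phi(x)}$ up to the obvious unitary (in fact isometric, invertible) identification $\ell^p(\calG x)\cong\ell^p(\mathcal{H}\Phi(x))$ permuting basis vectors. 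Hence $\|f\circ\Phi^{-1}\|_\lambda=\|f\|_\lambda$ for all $f\in C_c(\calG)$, and $f\mapsto f\circ\Phi^{-1}$ extends to an isometric isomorphism $F^p_\lambda(\calG)\cong F^p_\lambda(\mathcal{H})$.

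\textbf{The hard direction.} Conversely, suppose $\Psi\colon F^p_\lambda(\calG)\to F^p_\lambda(\mathcal{H})$ is an isometric isomorphism. By \autoref{prp:CorePreserved} applied to both $\Psi$ and $\Psi^{-1}$ (both are unital, since an isometric isomorphism of unital Banach algebras is unital, and contractive), $\Psi$ restricts to an isometric isomorphism $\core(F^p_\lambda(\calG))\cong\core(F^p_\lambda(\mathcal{H}))$, which by \autoref{prp:coreGpd} is an isomorphism $C(\calGZero)\cong C(\mathcal{H}^{(0)})$; dually this gives a homeomorphism $\psi_0\colon X_{F^p_\lambda(\calG)}\to X_{F^p_\lambda(\mathcal{H})}$, i.e.\ $\calGZero\cong\mathcal{H}^{(0)}$. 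The key point is that $\Psi$ also respects the structure used to build the Weyl groupoid: if $s=(a,b)$ is an admissible pair in $F^p_\lambda(\calG)$ realizing a partial homeomorphism $\alpha_s$ of $\calGZero$, then $(\Psi(a),\Psi(b))$ is an admissible pair in $F^p_\lambda(\mathcal{H})$, because all three conditions in \autoref{df:Normalizer} are phrased purely in terms of the algebra multiplication, the identification of the core with $C(-)$, and positivity of hermitian elements --- all of which $\Psi$ preserves. Moreover the partial homeomorphism realized by $(\Psi(a),\Psi(b))$ is $\psi_0\circ\alpha_s\circ\psi_0^{-1}$. Thus $\Psi$ carries the inverse semigroup of realizable partial homeomorphisms on $\calGZero$ onto that of $\mathcal{H}^{(0)}$, compatibly with $\psi_0$, and hence induces an isomorphism of the associated groupoids of germs, $\calG_{F^p_\lambda(\calG)}\cong\calG_{F^p_\lambda(\mathcal{H})}$. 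Now \autoref{thm:GroupoidRigidigy} identifies these with $\calG$ and $\mathcal{H}$ respectively, giving the desired groupoid isomorphism $\calG\cong\mathcal{H}$.

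\textbf{Main obstacle.} The only nontrivial point is the assertion that an isometric algebra isomorphism transports admissible pairs to admissible pairs and conjugates the realized partial homeomorphism by $\psi_0$. This is a bookkeeping verification rather than a deep one: one checks \autoref{df:Normalizer}(1)--(3) for $(\Psi(a),\Psi(b))$ using that $\Psi$ is multiplicative, that $\Psi(\core(F^p_\lambda(\calG)))=\core(F^p_\lambda(\mathcal{H}))$ with $\Psi|_{\core}$ the Gelfand transform of $\psi_0$, and that $\Psi$ sends $C(\calGZero)_+$ (the positive hermitian elements, by the remark before \autoref{df:Normalizer}) onto $C(\mathcal{H}^{(0)})_+$. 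The sets $U_s,V_s$ and the formulas defining $\alpha_s$ then transform as claimed. If one prefers, this functoriality of $A\mapsto(X_A,\calG_A)$ under isometric (or even unital bicontractive) isomorphisms can be isolated as a short lemma and applied here; in either case it presents no real difficulty, so the corollary follows.
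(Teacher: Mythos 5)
Your argument is correct and is exactly the route the paper intends: the corollary is stated without proof because it follows immediately from \autoref{thm:GroupoidRigidigy} once one notes that $A\mapsto(X_A,\calG_A)$ is invariant under isometric isomorphisms (via \autoref{prp:CorePreserved} and the intrinsic nature of \autoref{df:Normalizer}) and that $\calG\mapsto F^p_\lambda(\calG)$ is invariant under groupoid isomorphisms. Your write-up simply makes these two functoriality checks explicit, so there is nothing to add.
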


\section{\texorpdfstring{$L^p$}{Lp}-rigidity of dynamical systems}
\label{sec:RigidDynSys}

We now specialize to transformation groupoids. 

\begin{pgr}
\label{pgr:MapCrProdToTransfGrpd}
Let $G$ be a discrete group, and let $X$ be a compact Hausdorff space.
Given an action $G\curvearrowright X$ of $G$ on $X$, written $(g,x)\mapsto g\cdot x$, the \emph{transformation groupoid} $G\ltimes X$ is defined as the product space $G\times X$ endowed with the operations
\[
(g,h\cdot x)(h,x)=(gh,x), \andSep 
(g,x)^{-1}=(g^{-1},g\cdot x)
\]
for all $g,h\in G$ and all $x\in X$. 

The groupoid $G\ltimes X$ is \'etale when equipped with the natural product topology, and its unit space is $\{1\}\times X$, which we identify with $X$.
This also identifies $C(X)$ with a subalgebra of $C_c(G\ltimes X)$.
Let $\alpha\colon G\to\Aut(C(X))$ denote the associated action, given by $\alpha_g(f)(x)=f(g^{-1}x)$ for $g\in G$, $f\in C(X)$ and $x\in X$.
For $g\in G$, we let $u_g$ denote the characteristic function of $\{g\}\times X$ in $C_c(G\ltimes X)$.
Then $u_g f u_{g^{-1}} = \alpha_g(f)$ for $g\in G$ and $f\in C(X)$, with the product in $C_c(G\ltimes X)$ as in \autoref{dfn:convolution}.

Let $\varphi\colon C_c(G,C(X)) \to C_c(G\ltimes X)$ denote the map given by
\begin{align*}
\varphi(fu_g)(s,x)=\begin{cases}
             f(sx), & \text{ if } s=g;\\
             0, & \text{ if } s\neq g,
            \end{cases}
\end{align*}
for $f\in C(X)$ and $g\in G$. It follows from the above discussion
that $\varphi$ is an algebra isomorphism, where 
$C_c(G\ltimes X)$ is given the algebra structure from \autoref{pgr:crProduct}.

We let $c_G$ denote the counting measure on $G$.
Given a Borel probability measure $\mu$ on $X$, we use the notation $\mathrm{Ind}(\mu)\colon C_c(G\ltimes X)\to \Bdd(L^p(G\times X,c_G\times \mu))$ from Section~6.3 of \cite{GarLup17ReprGrpdLp}, and recall that $\mathrm{Ind}(\mu)$ is induced by convolution in $C_c(G\ltimes X)$.
More specifically, we have $\mathrm{Ind}(\mu)(a)\xi=a\ast\xi$ for all $a\in C_c(G\ltimes X)$ and all $\xi\in C_c(G\ltimes X)\subseteq L^p(G\times X,c_G\times \mu)$.
\end{pgr}

\begin{lma}
\label{prp:computationRhoMu}
Let $G\curvearrowright X$ be an action of a discrete group $G$ on a compact, Hausdorff space $X$.
Let $\mu$ be a Borel probability measure on $X$, let $\pi_{\mu,0}\colon C(X)\to\Bdd(L^p(X,\mu))$ be the associated unital representation by multiplication operators, and let $\pi_{\mu}\rtimes \lambda^\mu$
be the induced regular representation as in \autoref{pgr:crProduct}.
Let $\varphi$ be the natural identification described in \autoref{pgr:MapCrProdToTransfGrpd}.

Then, using the natural identification $\ell^p(G,L^p(X,\mu)) \cong L^p(G\times X,c_G\times\mu)$, we have $\pi_\mu\rtimes \lambda^\mu = \mathrm{Ind}(\mu)\circ\varphi$.
This means that the following diagram commutes:
\[
\xymatrix{
C_c(G,C(X)) \ar[rr]^-{\pi_\mu\rtimes \lambda^\mu} \ar[d]_{\varphi}
&&  \Bdd(\ell^p(G,L^p(X,\mu))) \ar[d]^{\cong} \\
C_c(G\ltimes X) \ar[rr]_-{\mathrm{Ind}(\mu)}
&& \Bdd(L^p(G\times X,c_g\times\mu)).
}
\]
\end{lma}
\begin{proof}
Recall (see \autoref{pgr:crProduct}) that
$\pi_\mu\colon C(X)\to \Bdd(\ell^p(G,L^p(\mu))$ is given by
\[
\pi_\mu(f)(\xi)(g)=\pi_{\mu,0}(\alpha_{g^{-1}}(f))(\xi(g))
= \alpha_{g^{-1}}(f)\xi(g)
\]
for all $f\in C(X)$, all $\xi\in \ell^p(G,L^p(\mu))$ and all $g\in G$.
By linearity, it is enough to verify $(\pi_\mu\rtimes \lambda^\mu)(fu_g)=\mathrm{Ind}(\mu)(\varphi(fu_g))$ for $f\in C(X)$ and $g\in G$. 
In this case, we have
\begin{align*}
(\pi_\mu\rtimes \lambda^\mu)(fu_g)(\xi)(s,x)
&= \pi_\mu(f)[\lambda^\mu_g(\xi)](s,x) \\
&= \pi_{\mu,0}(\alpha_s^{-1}(f))(x)\cdot \lambda^\mu_g(\xi)(s,x) 
= f(sx)\xi(g^{-1}s,x)
\end{align*}
for $s\in G$ and $x\in X$. 
On the other hand,
\begin{align*}
\mathrm{Ind}(\mu)(\varphi(fu_g))(\xi)(s,x)
&= (\varphi(fu_g)\ast\xi)(s,x) \\
&= \sum_{(t,y)\in G\ltimes X(1,x)} \varphi(fu_g)(st^{-1},ty)\xi(t,y)\\
&= \sum_{t\in G} \varphi(fu_g)(st^{-1},tx)\xi(t,x) 
= f(sx)\xi(g^{-1}s,x),
\end{align*}
as desired.
\end{proof}   

It will be convenient for us to know that, when considering all regular covariant
representations of a dynamical system $G\curvearrowright X$, it suffices to 
consider representations of $C(X)$ on $L^p(X,\mu)$ by multiplication operators,
for Borel measures $\mu$ on $X$. The following lemma is the case of probability 
measures, while the general case can be reduced to it by considering separable
subsystems (see the proof of \autoref{prp:crProdVsTransfGrpdReduced}).

\begin{lma}
\label{prp:desintigrate}
Let $G\curvearrowright X$ be an action of a discrete group on a compact, Hausdorff space.
Let $(Y,\nu)$ be a standard Borel probability space.
Let $\pi_0\colon C(X)\to \Bdd(L^p(Y,\nu))$ be a unital representation, and let $\pi\rtimes\lambda^\nu$ be the induced regular representation as in \autoref{pgr:crProduct}.
Let $\varphi\colon C_c(G,C(X)) \to C_c(G\ltimes X)$ be the natural identification described in \autoref{pgr:MapCrProdToTransfGrpd}.
Then
\[
\| \pi\rtimes\lambda^\nu(a) \| 
\leq \| \varphi(a) \|_{F^p_\lambda(G\ltimes X)}
\]
for every $a\in C_c(G,C(X))$.
\end{lma}
\begin{proof}
By \autoref{prp:CorePreserved}, the range of $\pi_0$ is contained in the C*-core of 
$\Bdd(L^p(\nu))$, which by \autoref{exa:BLp} coincides with the algebra of multiplication
operators by functions in $L^\I(\nu)$. 
We thus regard $\pi_0$ as a unital, contractive *-homomorphism $C(X)\to L^\I(\nu)$. 
Integration against $\nu$ defines a tracial state $\tau_\nu\colon L^\infty(\nu)\to \CC$. 
Hence, $\tau_\nu\circ\pi_0\colon C(X)\to \CC$ is also a tracial state, and thus there is a unique Borel probability measure $\mu$ on $X$ such that $\tau_\mu=\tau_\nu\circ\pi_0$.
To lighten the notation, write
\[
\rho
= \pi\rtimes \lambda^\nu
\colon C_c(G, C(X)) \to \Bdd\big( \ell^p(G,L^p(Y,\nu)) \big)
\] 
for the induced regular representation of $\pi_0$, and similarly write
\[
\rho_\mu 
= \pi_{\mu}\rtimes\lambda^\mu
\colon C_c(G,C(X))\to\Bdd\big( \ell^p(G,L^p(Y,\mu)) \big)
\]
for the induced regular representation of $\pi_\mu$.
Fix $a\in C(X)$ for the rest of this proof. 
Using \autoref{prp:desintigrate} at the first step and 
Corollary~6.15 in~\cite{GarLup17ReprGrpdLp} at the last step, we get
\[
\| \rho_\mu(a) \|
= \| \mathrm{Ind}(\mu)(\varphi(a)) \|
\leq \sup_{\mu'\in M_1(X)} \| \mathrm{Ind}(\mu')(\varphi(a)) \|
= \| \varphi(a) \|_{F^p_\lambda(G\ltimes X)}.
\]
It thus suffices to show that $\|\rho(a)\| = \|\rho_\mu(a)\|$.

Denote by $\overline{\pi}_0\colon L^\I(X,\mu)\to L^\I(Y,\nu)$ the unique extension of $\pi_0$ to a unital, injective, normal and trace-preserving *-homomorphism.
Then $\overline{\pi}_0$ induces\footnote{Since we
could not find a reference for this folklore result, we sketch the argument. Denote by $(\mathcal{A},\mu)$ the measure algebra of $\mu$, namely the quotient of 
the $\sigma$-algebra of $\mu$ by the ideal $\mathcal{N}$ of sets
of $\mu$-measure zero; and similarly for $(\mathcal{B},\nu)$. It is well-known that the set of projections
in $L^\infty(X,\mu)$ is isomorphic to $\mathcal{A}$ via identifying a class $e=E+\mathcal{N}\in \mathcal{A}$ with the indicator function $\chi_E$; this correspondence identifies $\tau_\mu$ with $\mu$.
We define $\widetilde{\kappa}\colon (\mathcal{A},\mu)\to (\mathcal{B},\nu)$ by letting 
$\widetilde{\kappa}(e) \in \mathcal{B}$ be the class associated to the projection $\varphi(\chi_E)\in L^\infty(Y,\nu)$, for $e=E+\mathcal{N}\in\mathcal{A}$. Finally, \cite[Theorem 343B]{Fre-MsrThy3b} implies that $\widetilde{\kappa}$ can be lifted to a measurable map $\kappa\colon Y\to X$,
which is then immediately seen to be measure-preserving (because so is $\widetilde{\kappa}$)
and essentially surjective (because $\widetilde{\kappa}$ is injective).}
a measurable, measure-preserving, essentially surjective map 
$\kappa\colon (Y,\nu)\to (X,\mu)$ such that $\overline{\pi}_0(f)=f\circ\kappa$ for every $f\in L^\infty(X,\mu)$.

Under the map $\kappa$, we regard $Y$ as fibered over $X$, and for $x\in X$ we write $Y_x$ for the standard Borel space $Y_x=\kappa^{-1}(x)$. 
The disintegration theorem (see, for example, the first paragraph on page~316 of \cite{Ram82TopMsrdGrps}) gives, for every $x\in X$, a Borel probability measure $\nu_x$ on $Y$, whose support is contained in $Y_x$, satisfying:
\begin{itemize}
\item 
for every Borel subset $B\subseteq Y$, the assignment $x\mapsto \nu_x(B)$ is 
Borel;
\item 
for every Borel function $f\colon Y\to \mathbb{R}$, we have
\[\int_Y f\ d\nu = \int_X \int_{Y_x} f\ d\nu_xd\mu.\]
\end{itemize}

Following Ramsay's terminology at the top of page 338 of \cite{Ram82TopMsrdGrps}, for $n\in \{1,2,\ldots,\infty\}$ we say that a probability space is of \emph{type $n$} if it is atomic and has exactly $n$ atoms.
For $n=0,-1,-2,\ldots,-\infty$, we say that a probability space is of \emph{type $n$} if it is not atomic and has exactly $-n$ atoms. 
It is well-known that the type of a standard Borel probability spaces determines its isomorphism class.
Write $\overline{\mathbb{Z}}$ for $\mathbb{Z}\cup \{-\infty,\infty\}$. 
Given $n\in\overline{\mathbb{Z}}$, we fix a standard Borel probability space $(Z_n,\delta_n)$ of type $n$.

For $n\in\overline{\mathbb{Z}}$, set 
\[
X_n = \big\{ x\in X : (Y_x,\nu_x) \mbox{ is of type } n \big\} \andSep
Y_n = \kappa^{-1}(X_n) \subseteq Y,
\]
and write $\mu_n$ for the restriction of $\mu$ to $X_n$, and similarly write $\nu_n$ for the restriction of $\nu$ to $Y_n$.
By Lemma~6.4 in~\cite{Ram82TopMsrdGrps}, $X_n$ is a Borel subset of $X$, and hence so is $Y_n\subseteq Y$, for each $n\in\overline{\ZZ}$.
Further, $X$ decomposes as a disjoint union $X=\bigsqcup_{n\in\overline{\mathbb{Z}}}X_n$, and similarly $Y=\bigsqcup_{n\in\overline{\mathbb{Z}}}Y_n$.

With respect to the natural decomposition $L^p(Y,\nu) \cong \bigoplus_{n\in\overline{\mathbb{Z}}} L^p(Y_n,\nu_n)$, every operator $\pi_0(f)\in\Bdd(L^p(Y,\nu))$ for $f\in C(X)$ is block-diagonal.
For the induced decomposition
\[
\ell^p(G,L^p(Y,\nu))
\cong \bigoplus_{n\in\overline{\mathbb{Z}}} \ell^p(G,L^p(Y_n,\nu_n)),
\]
it follows that 
$\rho(a)$ is block-diagonal, with blocks $\rho(a)_n\in\Bdd( \ell^p(G,L^p(Y_n,\nu_n)) )$
for $n\in\overline{\mathbb{Z}}$.
Similarly, with respect to the natural decomposition
\[
\ell^p(G,L^p(X,\mu))
\cong \bigoplus_{n\in\overline{\mathbb{Z}}} \ell^p(G,L^p(X_n,\mu_n))
\]
the operator $\rho_\mu(a)$ is block-diagonal, with blocks $\rho_\mu(a)_n\in \Bdd( \ell^p(G,L^p(X_n,\mu_n)) )$ for $n\in\overline{\mathbb{Z}}$. In particular, we have
\[
\|\rho(a)\| 
= \sup_{n\in\overline{\mathbb{Z}}} \|\rho(a)_n\| \ \ \mbox{ and } \ \ 
\| \rho_\mu(a) \|= \sup_{n\in\overline{\mathbb{Z}}} \|\rho_\mu(a)_n\|.
\]

Fix $n\in\overline{\mathbb{Z}}$.
Since $(Y_x,\nu_x)$ is isomorphic to $(Z_n,\delta_n)$ for all $x\in X_n$, the existence of a measurable section for 
$\kappa$ implies that
there is an isomorphism $(Y_n,\nu_n)\cong(X_n,\mu_n)\times(Z_n,\delta_n)$, such that the restriction $\kappa|_{Y_n}\colon Y_n \to X_n$ is identified with the projection onto the first coordinate.
We thus obtain an isometric isomorphism
\[
\ell^p(G,L^p(Y_n,\nu_n))
\cong \ell^p(G,L^p(X_n,\mu_n))\otimes^p L^p(Z_n,\delta_n)
\]
that identifies $\rho(a)_n$ with $\rho_\mu(a)_n\otimes\id_{L^p(Z_n,\delta_n)}$.
It follows that
\[
\| \rho(a)_n \|
= \| \rho_\mu(a)_n\otimes\id_{L^p(Z_n,\delta_n)} \|
= \| \rho_\mu(a)_n\|.
\]

Using that that this holds for every $n\in\overline{\mathbb{Z}}$, we conclude that
\[
\|\rho(a)\| 
= \sup_{n\in\overline{\mathbb{Z}}} \|\rho(a)_n\|
= \sup_{n\in\overline{\mathbb{Z}}} \|\rho_\mu(a)_n\|
= \| \rho_\mu(a) \|.\qedhere
\]
\end{proof}

\begin{prp}
\label{prp:crProdVsTransfGrpdReduced}
Let $p\in [1,\infty)$, and let $G\curvearrowright X$ be an action of a discrete group on a compact, Hausdorff space.
Then the natural identification $\varphi\colon C_c(G,C(X)) \to C_c(G\ltimes X)$ described in \autoref{pgr:MapCrProdToTransfGrpd} extends to an isometric isomorphism
\[
F^p_\lambda(G,C(X)) \cong F^p_\lambda(G\ltimes X).
\]
\end{prp}
\begin{proof}
Fix $a\in C_c(G,C(X))$ We
will show that $\|a\|_{F^p_\lambda(G,C(X))}=\|\varphi(a)\|_{F^p_\lambda(G\ltimes X)}$ , 
starting with the inequality `$\geq$'.

Let $M_1(X)$ denote the space of all Borel probability measures on $X$, and let $\mathcal{R}(G,X)$
denote the set of all integrated forms of regular covariant representations of $C_c(G,C(X))$ on $L^p$-spaces, in the sense of the discussion before \autoref{df:CrossedProds}. 
Given $\mu\in M_1(X)$, let $\pi_{\mu,0}\colon C(X)\to\Bdd(L^p(X,\mu))$ be the associated unital representation by multiplication operators, and let $\rho_\mu:=\pi_{\mu}\rtimes \lambda^\mu$ 
be the induced regular representation as in \autoref{pgr:crProduct}.
Note that $\rho_\mu$ belongs to $\mathcal{R}(G,X)$. 

Using Corollary~6.15 in~\cite{GarLup17ReprGrpdLp} at the first step (see also the second paragraph after \autoref{df:GroupoidLpOpAlg}), 
and using \autoref{prp:computationRhoMu} at the second step, we obtain
\begin{align*}
\label{eqn:Ineq1}\tag{6.1}
\|\varphi(a)\|_{F^p_\lambda(G\ltimes X)} &= \sup_{\mu\in M_1(X)} \|\mathrm{Ind}(\mu)(\varphi(a))\|
=\sup_{\mu\in M_1(X)} \|\rho_\mu(a)\|\\
&\leq
\sup_{\rho\in \mathcal{R}(G,X)}\|\rho(a)\|
= \|a\|_{F^p_\lambda(G,C(X))}.
\end{align*}
 
We now turn to the converse inequality `$\leq$'.
Let $\varepsilon>0$.
Using the definition of the norm on $F^p_\lambda(G,C(X))$, choose a measure space $(Y,\nu)$, a unital 
representation $\pi_0\colon C(X)\to \Bdd(L^p(Y,\nu))$, 
and $\xi\in\ell^p(G,L^p(Y,\nu))$ with $\|\xi\|_p=1$
such that,
with $\rho = \pi\rtimes\lambda^\nu\colon C_c(G\times  X)\to\Bdd(\ell^p(G,L^p(Y,\nu))$
denoting the induced regular representation as in \autoref{pgr:crProduct},
we have
\[
\|a\|_{F^p_\lambda(G,C(X))}-\varepsilon
< \|\rho(a)\xi\|_p.
\]

For $g\in G$, set $\xi_g=\xi(g)\in L^p(Y,\nu)$ and set
$a_g=a(g)\in C(X)$. 
Then at most countably many $\xi_g$ are nonzero (because $\|\xi\|^p_p=\sum_{g\in G}\|\xi\|_p^p<\I$), 
and at most finitely many $a_g$ are nonzero (because the support
of $a$ is finite). Let $G'$ denote the (countable) subgroup of 
$G$ generated by $\supp(\xi)$. Denoting by 
$\alpha\colon G\to\Aut(C(X))$ the induced action, 
it follows that the set
\[\{1_{C(X)}\}\cup \{\alpha_g(a_h)\colon g\in G', h\in G\}\]
is a countable and $G'$-invariant subset of $C(X)$.
Denote by $X'$ the spectrum of the C*-algebra it generates.
Then $G'$ acts on $X'$, and the canonical 
quotient map $X\to X'$ is $G'$-equivariant.
By construction, $a$ belongs to $C_c(G',C(X'))\subseteq C_c(G,C(X))$.

Choose a separable $L^p$-space $L^p(Y',\nu')\subseteq L^p(Y,\nu)$ that contains $\{\xi_g\colon g\in G\}$ and satisfies 
$\pi_0(b)\eta\in L^p(Y',\nu')$ for all $\eta\in L^p(Y',\nu')$;
see Proposition~1.25 (and its proof) in \cite{Phi13arX:LpCrProd}.
By construction, $\xi$ belongs to $\ell^p(G',L^p(Y',\nu'))\subseteq\ell^p(G,L^p(Y,\nu))$.

It is well-known that every separable $L^p$-space can be realized by a $\sigma$-finite measure, and thus by a probability measure;
see for example the corollary to Theorem~3 in Section~15 of \cite{Lac74IsoThyClassicalBSp}.
Thus, we may assume that $\nu'$ is a probability measure and that $Y'$ is a standard Borel space, which will allow us to apply \autoref{prp:desintigrate}.

Now $\pi_0$ induces a unital representation $\pi_0'\colon C(X')\to \Bdd(L^p(Y',\nu'))$.
Let
\[
\rho' = \pi'\rtimes\lambda^{\nu'}\colon C_c(G',C(X')) \to \Bdd(\ell^p(G',L^p(Y',\nu')))
\] 
denote the induced regular representation with respect to $G'\curvearrowright X'$.
We have $\rho(a)\xi=\rho'(a)\xi$ by construction.
Let 
\[\varphi'\colon C_c(G',C(X')) \to C_c(G'\ltimes X') \ \ \mbox{ and } \ \ \varphi\colon C_c(G,C(X)) \to C_c(G\ltimes X)\] denote the natural identifications described in \autoref{pgr:MapCrProdToTransfGrpd}.
Using \autoref{prp:desintigrate} at the last step, we get
\[
\|a\|_{F^p_\lambda(G,C(X))}-\varepsilon
< \|\rho(a)\xi\|_p
= \|\rho'(a)\xi\|_p
\leq \|\rho'(a)\|
\leq \|\varphi'(a)\|_{F^p_\lambda(G'\ltimes X')},
\]
Since by definition, we have
\[
\|\varphi'(a)\|_{F^p_\lambda(G'\ltimes X')}
= \sup_{x'\in X'} \| \lambda_{x'}(\varphi'(a)) \|,
\]
we may find $x'\in X'$ such that
\[
\|a\|_{F^p_\lambda(G',C(X'))}-\varepsilon
< \| \lambda_{x'}(\varphi'(a)) \|.
\]

Choose a preimage $x\in X$ of $x'$ under the quotient map $X\to X'$.
One verifies that $\| \lambda_{x'}(\varphi'(a)) \| =  \| \lambda_{x}(\varphi(a)) \|$, and consequently
\begin{align*}
\|a\|_{F^p_\lambda(G,C(X))}-\varepsilon
&< \| \lambda_{x'}(\varphi'(a)) \|.
= \| \lambda_{x}(\varphi(a)) \|\\
&\leq \sup_{y\in X} \| \lambda_{y}(\varphi(a)) \|
= \|\varphi(a)\|_{F^p_\lambda(G\ltimes X)}. 
\end{align*}
Since $\varepsilon>0$ is arbitrary, we conclude that 
$\|a\|_{F^p_\lambda(G,C(X))}
\leq \|\varphi(a)\|_{F^p_\lambda(G\ltimes X)}$.
\end{proof}

\begin{prp}
\label{prp:crProdVsTransfGrpdFull}
Let $p\in [1,\infty)$, and let $G\curvearrowright X$ be an action of a discrete group on a compact, Hausdorff space.
Then the natural identification $\varphi\colon C_c(G,C(X)) \to C_c(G\ltimes X)$ extends to a unital, contractive homomorphism
\[
F^p(G,C(X)) \to F^p(G\ltimes X).
\]
\end{prp}
\begin{proof}
As in \autoref{pgr:crProduct}, we denote the elements in $C_c(G,C(X))$ by finite linear combinations $\sum_{g\in G} a_gu_g$ with $a_g\in C(X)$.
Given a $L^p$-space $E$, it is straightforward to check that a unital homomorphism $\pi\colon C_c(G,C(X))\to\Bdd(E)$ is induced from a covariant representation of $(G,C(X))$ if and only if $\|\pi(\sum_{g\in G} a_g u_g)\|\leq\sum_{g\in G} \|a_g\|_\infty$ for every $\sum_{g\in G} a_g u_g\in C_c(G,C(X))$.
We have
\[
\Big\|\sum_{g\in G} a_g u_g \Big\|_I
= \max \Big\{ 
\sup_{x\in X} \sum_{g\in G} |a_g(x)|,
\sup_{x\in X} \sum_{g\in G} |a_g(g^{-1}x)|
\Big\}
\leq
\sum_{g\in G} \|a_g\|_\infty
\]
for all $\sum_{g\in G} a_g u_g\in C_c(G,C(X))$.
We conclude that every $I$-norm contractive, unital representation of $C_c(G,C(X))$ is induced from a covariant representation.
\end{proof}

In \autoref{prp:crProdVsTransfGrpdFull},
we do not claim that 
the map $F^p(G,C(X),\alpha) \to F^p(G\ltimes X)$ is isometric, since we do not
know that a covariant representation is contractive with respect to the $I$-norm.
This is also a delicate point in the \ca{} setting; 
see \cite[Lemma~3.2.3]{Sim17arX:EtaleGpds}.

Recall that an action of a discrete group $G$ on a compact Hausdorff space
$X$ is said to be \emph{topologically free} if $\{x\in X\colon g\cdot x=x \mbox{ implies } g=1\}$ 
is dense in~$X$.
Equivalently, the transformation groupoid $G\ltimes X$ is topologically principal;
see for example \cite[Section~4.2]{Sim17arX:EtaleGpds}.
For such transformation groupoids, groupoid isomorphism can be 
rephrased in terms of the underlying dynamics, via the notion of continuous orbit
equivalence, which we recall below (see Definition~2.5 in~\cite{Li18CtsOrbitEquiv}).

\begin{dfn}
\label{df:coe}
Let $G$ and $H$ be discrete groups, let $X$ and $Y$ be compact Hausdorff spaces, 
and let $G\curvearrowright^\sigma X$ and $H\curvearrowright^\rho Y$ be actions.
One says that $\sigma$ and $\rho$ are \emph{continuously orbit equivalent} 
if there exist a homeomorphism $\theta\colon X\to Y$ and continuous cocycle maps
$c_H\colon G\times X\to H$ and $c_G\colon H\times Y\to G$ satisfying
\[
\theta(\sigma_g(x))=\rho_{c_H(g,x)}(\theta(x)), \andSep
\theta^{-1}(\rho_h(y))=\sigma_{c_G(h,y)}(\theta^{-1}(y))
\]
for all $x\in X$, $y\in Y$, $g\in G$ and $h\in H$.
\end{dfn}

We are now ready to present our main application to isomorphisms of $L^p$-crossed products by topologically free actions.

\begin{thm}
\label{thm:RigidityDynSysts}
Let $p\in [1,\infty)\setminus\{2\}$, let $G$ and $H$ be discrete groups, 
let $X$ and $Y$ be compact Hausdorff spaces, and let $G\curvearrowright X$ and $H\curvearrowright Y$ be topologically free actions.
Then the following are equivalent:
\begin{enumerate}
\item 
There is an isometric isomorphism $F^p_\lambda(G,X)\cong F^p_\lambda(H,Y)$;
\item
There exists an isomorphism $G\ltimes X\cong H\ltimes Y$ of topological groupoids;
\item
$G\curvearrowright X$ and $H\curvearrowright Y$ are continuously orbit equivalent.
\end{enumerate}
\end{thm}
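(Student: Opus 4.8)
The plan is to pass everything through the transformation groupoid picture and invoke \autoref{thm:GroupoidRigidigy}. Recall from \cite{GarLup_representations_2017} that there are natural isometric isomorphisms $F^p_\lambda(G,X)\cong F^p_\lambda(G\ltimes X)$ and $F^p_\lambda(H,Y)\cong F^p_\lambda(H\ltimes Y)$, and that topological freeness of the actions says precisely that $G\ltimes X$ and $H\ltimes Y$ are topologically principal, Hausdorff, \'etale groupoids with compact unit spaces $X$ and $Y$; thus \autoref{thm:GroupoidRigidigy} applies to both. By \autoref{lma:COEGroupoid}, statement~(2) is equivalent to the existence of a groupoid isomorphism $G\ltimes X\cong H\ltimes Y$, so it suffices to prove that the latter is equivalent to~(1).

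First I would treat the implication ``$(2)\Rightarrow(1)$''. A groupoid isomorphism $\Theta\colon G\ltimes X\to H\ltimes Y$ induces an algebra isomorphism $C_c(G\ltimes X)\to C_c(H\ltimes Y)$, $f\mapsto f\circ\Theta^{-1}$, which preserves the $I$-norm and intertwines the regular representations $\pi_x$ and $\pi_{\Theta(x)}$; hence it is isometric for the reduced norms and extends to an isometric isomorphism $F^p_\lambda(G\ltimes X)\cong F^p_\lambda(H\ltimes Y)$. Composing with the two identifications above yields an isometric isomorphism $F^p_\lambda(G,X)\cong F^p_\lambda(H,Y)$. This direction is routine.

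For ``$(1)\Rightarrow(2)$'', let $\Phi$ be an isometric isomorphism, which via the identifications above we may regard as an isometric isomorphism $F^p_\lambda(G\ltimes X)\to F^p_\lambda(H\ltimes Y)$. Being an isomorphism of unital algebras, $\Phi$ is unital, and it is contractive with contractive inverse, so applying \autoref{prp:CorePreserved} to both $\Phi$ and $\Phi^{-1}$ shows that $\Phi$ maps $\core(F^p_\lambda(G\ltimes X))$ onto $\core(F^p_\lambda(H\ltimes Y))$; the restriction is an isometric isomorphism of commutative unital $C^*$-algebras, hence a $\ast$-isomorphism inducing a homeomorphism of their spectra. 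Since $\Phi$ is moreover an algebra isomorphism (so it preserves spectra) carrying hermitian elements to hermitian elements (as $\exp(it\Phi(a))=\Phi(\exp(ita))$), it carries positive hermitian elements to positive hermitian elements, i.e.\ it maps the positive cone of one $C^*$-core onto that of the other. Consequently it transports the conditions of \autoref{df:Normalizer}: an admissible pair $s=(a,b)$ in $F^p_\lambda(G\ltimes X)$ is sent to an admissible pair $(\Phi(a),\Phi(b))$ in $F^p_\lambda(H\ltimes Y)$, with the partial homeomorphism it realizes corresponding, under the homeomorphism of spectra, to $\alpha_s$. Thus $\Phi$ induces an isomorphism of the inverse semigroups of realizable partial homeomorphisms, and hence an isomorphism of their groupoids of germs, $\calG_{F^p_\lambda(G\ltimes X)}\cong\calG_{F^p_\lambda(H\ltimes Y)}$. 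By \autoref{thm:GroupoidRigidigy} (and its naturality) these Weyl groupoids are $G\ltimes X$ and $H\ltimes Y$, so $G\ltimes X\cong H\ltimes Y$ as topological groupoids, and \autoref{lma:COEGroupoid} gives continuous orbit equivalence.

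The substantive input is \autoref{thm:GroupoidRigidigy}, already established; the remaining work to be done carefully in this proof is the functoriality of the Weyl groupoid under isometric isomorphisms used in the last paragraph. The delicate points there are verifying that an isometric algebra isomorphism genuinely preserves hermitian and positive-hermitian elements, and that it therefore transports the sign- and support-theoretic defining conditions of an admissible pair so that realizable partial homeomorphisms correspond bijectively and compatibly with composition and inversion; once this naturality is in place, the rest is bookkeeping with results already proved above. No new hypothesis beyond \emph{isometric} on the isomorphism is expected to be needed, and this is exactly the feature distinguishing the case $p\neq 2$ from $p=2$.
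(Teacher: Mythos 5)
Your proposal is correct and follows essentially the same route as the paper: pass to the transformation groupoids via the identifications $F^p_\lambda(G,X)\cong F^p_\lambda(G\ltimes X)$, note that topological freeness gives topological principality, and combine \autoref{thm:GroupoidRigidigy} with \autoref{lma:COEGroupoid}. The only difference is that you spell out the functoriality of the $C^*$-core, admissible pairs, and the Weyl groupoid under isometric isomorphisms, which the paper leaves implicit in the unproved corollary following \autoref{thm:GroupoidRigidigy}; your verification of that step is sound.
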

\begin{proof}
By \autoref{prp:crProdVsTransfGrpdReduced}, there are canonical isometric identifications $F^p_\lambda(G,X)\cong F^p_\lambda(G\ltimes X)$ and $F^p_\lambda(H,Y)\cong F^p_\lambda(H\ltimes Y)$.
Since the groupoids $G\ltimes X$ and $H\ltimes Y$ are topologically principal, \autoref{thm:GroupoidRigidity} implies that~(1) and~(2) are equivalent.

The equivalence between~(2) and~(3) has been noted several times in the literature;
see, for example, Theorem~1.2 of~\cite{Li18CtsOrbitEquiv}.
\end{proof}

\section{Tensor products of \texorpdfstring{$L^p$}{Lp}-operator algebras}
\label{sec:tensProd}

In this section, we discuss the maximal and spatial tensor products of $L^p$-operator algebras.
Spatial tensor products have been briefly discussed 
in Remark~1.14 and Example~1.15 of~\cite{Phi13arX:LpCrProd}, and we expand on 
it here. It is not clear whether the spatial tensor product norm is the minimal $L^p$-operator
algebra tensor norm (as is the case for $C^*$-algebras), and in fact we suspect that this may be 
false in general. Maximal tensor 
products are defined in analogy with the case of
C*-algebras (see \autoref{df:TensProdLp}), and 
their norm is the largest of all $L^p$-operator
algebra tensor norms.

Given actions $G\curvearrowright X$ and $H\curvearrowright Y$, we relate the spatial (maximal) tensor product of the reduced (full) $L^p$-operator crossed products to the reduced (full) $L^p$-operator crossed product of the product action $(G\times H)\curvearrowright (X\times Y)$;
see \autoref{prp:tensCrProd}.
If both actions are amenable, then $F^p(G,X)=F^p_\lambda(G,X)$ and $F^p(H,Y)=F^p_\lambda(H,Y)$ and the reduced and maximal tensor products of $F^p_\lambda(G,X)$ and $F^p_\lambda(H,Y)$ agree;
see \autoref{prp:TensMinMax}.

\begin{pgr}
Let $p\in[1,\infty)$, and let $A$ be a Banach algebra.
An \emph{$L^p$-representation} of $A$ is a measure space $\mu$ together with a contractive homomorphism $\pi\colon A\to\Bdd(L^p(\mu))$. (The measure $\mu$ can, without loss of 
generality, always be assumed to be localizable; see \autoref{prp:RealizeLpByLocalizable}.)
The representation is \emph{nondegenerate} if the closed linear span of $\{\pi(a)\xi\colon a\in A,\xi\in L^p(\mu)\}$ is $L^p(\mu)$.
We use $\Rep_p(A)$ to denote the class of all nondegenerate $L^p$-representations of $A$. Define a seminorm on $A$ by
setting 
\[
\|a\|_{L^p} := \sup \big\{ \|\pi(a)\| : \pi\in\Rep_p(A) \big\},
\]
for $a\in A$.
(Note that the supremum makes sense, even though $\Rep_p(A)$ is not a set.)
The \emph{enveloping $L^p$-operator algebra} of $A$, denoted by $F^p(A)$, is the Hausdorff completion of $A$ with respect to $\|\cdot\|_{L^p}$.

The Banach algebra $F^p(A)$ is an $L^p$-operator algebra with the universal property that every nondegenerate $L^p$-representation of $A$ factors through the natural map $A\to F^p(A)$.
In particular, we have a natural bijection $\Rep_p(F^p(A))\cong\Rep_p(A)$.

Let $A$ and $B$ be Banach algebras.
We use $A\odot B$ to denote the algebraic tensor product of $A$ and $B$, and $A\tensProj B$ to 
denote their projective tensor product. 
The multiplication on $A\odot B$, given on simple tensors by $(a_1\otimes b_1)(a_2\otimes b_2):=a_1a_2\otimes b_1b_2$, extends uniquely to a multiplication on $A\tensProj B$ giving it the structure of a Banach algebra.
Given $L^p$-representations $\pi_A\colon A\to\Bdd(L^p(\mu_1))$ and $\pi_B\colon B\to\Bdd(L^p(\mu_2))$, we obtain a natural homomorphism $\pi_A\otimes\pi_B\colon A\odot B\to\Bdd(L^p(\mu_1\times\mu_2))$ satisfying $\|(\pi_A\otimes\pi_B)(a\otimes b)\|\leq\|a\|\|b\|$ for $a\in A$ and $b\in B$. In particular, $\pi_A\otimes\pi_B$ extends to an $L^p$-representation of $A\tensProj B$.
\end{pgr}

\begin{dfn}\label{df:TensProdLp}
Let $p\in[1,\infty)$, and let $A$ and $B$ be $L^p$-operator algebras.
The \emph{spatial norm} and the \emph{maximal norm} on $A\odot B$ are 
respectively given by
\begin{align*}
\| t \|_\spat &:= \sup \big\{ \| (\pi_A\otimes\pi_B)(t) \| : \pi_A\in\Rep_p(A), \pi_B\in\Rep_p(B) \big\}, \\
\| t \|_{\text{max}} &:= \sup \big\{ \| \pi(t) \| : \pi\in\Rep_p(A\widehat{\otimes} B) \big\},
\end{align*}
for $t\in A\odot B$.
The corresponding completions of $A\odot B$ are respectively 
called the \emph{spatial $L^p$-operator algebra tensor product} (or just spatial tensor product), denoted by $A\otimes^p_\spat B$, and the \emph{maximal $L^p$-operator algebra tensor product} (or just maximal tensor product), denoted by $A\otimes^p_{\text{max}} B$.
By construction, there is a canonical contractive homomorphism
$\tau_{A,B}^p\colon A\otimes^p_{\text{max}} B\to A\otimes^p_{\text{sp}} B$
with dense range.
\end{dfn}


\begin{lma}
\label{prp:tensMaxCX}
Let $p\in[1,\infty)$, and let $X$ and $Y$ be compact, Hausdorff spaces.
Then the natural map $C(X)\odot C(Y)\to C(X\times Y)$ induces isometric isomorphisms 
\[C(X)\tensMax^p C(Y)\cong C(X)\otimes^p_{\text{sp}} C(Y)\cong C(X\times Y).\]
\end{lma}
\begin{proof}
We only need to show that the norm in $C(X)\tensMax^p C(Y)$ is dominated by the norm in $C(X\times Y)$.
Let $\mu$ be a localizable measure, and let $\pi\colon C(X)\tensProj C(Y)\to\Bdd(L^p(\mu))$ be a unital representation.
Assume that $\pi$ factors through a unital, contractive homomorphism $\rho\colon C(X)\tensMax^p C(Y)\to A$ for some \ca{} $A$.
Then $\rho(C(X)\otimes 1)$ and $\rho(1\otimes C(Y))$ are commuting, commutative, unital sub-\ca{s} of $A$, which then implies that $\rho$ factors through $C(X\times Y)$.

Thus, we need to show that $\pi$ factors through a \ca.
This is clear for $p=2$.
For $p\neq 2$, if $f\in C(X)$ is hermitian, then so is $\pi(f\otimes 1)$ by \autoref{prp:HermUnitalMaps}, and hence $\pi(f\otimes 1)$ belongs to $L^\infty(\mu)$ by \autoref{eg:HermBLp}.
Hence, $\pi(C(X)\otimes 1)\subseteq L^\infty(\mu)$. Analogously, $\pi(1\otimes C(Y))\subseteq L^\infty(\mu)$.
We deduce that $\pi$ factors through $L^\infty(\mu)$.
\end{proof}


\begin{prp}
\label{prp:tensCrProd}
Let $p\in [1,\infty)$, and let $\mathcal{G}$
and $\mathcal{H}$ be Hausdorff, \'etale groupoids with compact unit spaces.
Then the natural map $C_c(\mathcal{G})\odot C_c(\mathcal{H})\to C_c(\mathcal{G}\times\mathcal{H})$
induces a unital, contractive homomorphism
\[
\varphi_{\lambda}^p\colon F^p_\lambda(\mathcal{G}) \otimes^p_\spat F^p_\lambda(\mathcal{H}) \to F^p_\lambda(\mathcal{G}\times \mathcal{H}),
\]
and an isometric isomorphism
\[
\varphi^p\colon F^p(\mathcal{G}) \tensMax^p F^p(\mathcal{H}) \to F^p(\mathcal{G}\times \mathcal{H}).
\]
\end{prp}
\begin{proof}
Let $\alpha\colon C_c(\mathcal{G})\odot C_c(\mathcal{H})\to C_c(\mathcal{G}\times\mathcal{H})$ denote the natural map. Set $X=\mathcal{G}^{(0)}$ and 
$Y=\mathcal{H}^{(0)}$.
Let $x\in X$ and $y\in Y$, and let
\[
\lambda_x\colon C_c(\mathcal{G})\to\Bdd(\ell^p(\mathcal{G}x)), \andSep
\lambda_y\colon C_c(\mathcal{H})\to\Bdd(\ell^p(\mathcal{H}y))
\]
be the associated left regular representations as in \autoref{prp:regReprGroupoid}.
We have
\[
\big(\mathcal{G}\times \mathcal{H}\big)(x,y)
=(\mathcal{G}x)\times(\mathcal{H}y),
\] 
as subsets of $\mathcal{G}\times \mathcal{H}$.
Given $e\in C_c(\mathcal{G}), f\in C_c(\mathcal{H})$, $\xi\in C_c(\mathcal{G}x)$, and $\eta\in C_c(\mathcal{H}y)$, it is straightforward to check that $\alpha$ 
sends $(e\ast\xi)\otimes(f\ast\eta)$ to $(e\otimes f)\ast(\xi\otimes\eta)$.
Hence, after identifying $\ell^p(\mathcal{G}x)\otimes^p\ell^p(\mathcal{H}y)$ with $\ell^p\big((\mathcal{G}x)\times (\mathcal{H}y)\big)$, we have
\[
(\lambda_x\otimes\lambda_y)(e\otimes f) = \lambda_{(x,y)}(e\otimes f) \in \Bdd\Big( \ell^p\big((\mathcal{G}x)\times (\mathcal{H}y)\big) \Big).
\]
Note that $\lambda_x$ extends to a unital contractive representation $\lambda_x\colon F^p_\lambda(\mathcal{G})\to\Bdd(\ell^p(\mathcal{G}x))$, and similarly for $\lambda_y$.
It follows that
\begin{align*}
&\| t \|_{F^p_\lambda(\mathcal{G}) \otimes^p_\spat F^p_\lambda(\mathcal{H})} \\
&\quad\quad 
= \sup \big\{ \| (\pi_1\otimes\pi_2)(t) \| : \pi_1\in\Rep_p(F^p_\lambda(\mathcal{G})), \pi_2\in\Rep_p(F^p_\lambda(\mathcal{H})) \big\} \\
&\quad\quad 
\geq \sup \big\{ \| (\lambda_x\otimes\lambda_y)(t) \| : x\in X, y\in Y \big\} \\
&\quad\quad = \sup \big\{ \| \lambda_{(x,y)}(t) \| : (x,y)\in X\times Y \big\} 
= \|t\|_{F^p_\lambda(\mathcal{G}\times \mathcal{H})},
\end{align*}
for every $t\in C_c(\mathcal{G})\odot C_c(\mathcal{H})$, which proves the first statement.

The statement about the maximal tensor product of full groupoid $L^p$-operator
algebras follows easily using the universal properties of the objects involved:
Unital representations of $F^p(\mathcal{G})\tensProj F^p(\mathcal{H})$ correspond to pairs consisting of commuting groupoid representations of $\mathcal{G}$ and 
$\mathcal{H}$, which are easily seen to correspond to groupoid representations of $\mathcal{G}\times \mathcal{H}$. We omit the straightforward details.
\end{proof}

In the context of the proposition above, it 
is not clear if $\varphi_{\lambda}^p$ is isometric,
except for the situations covered by \autoref{prp:TensMinMax}.
In particular, given nonamenable groups $G$ and $H$, it is not clear if $F^p_\lambda(G)\otimes^p_\spat F^p_\lambda(H)$ is isometrically isomorphic to $F^p_\lambda(G\times H)$.


We record here the following useful fact, which is the crossed product analog of a
similar result for \'etale groupoids, namely Theorem~6.19 in~\cite{GarLup17ReprGrpdLp}.
(Observe that the lemma below does not directly follow from Theorem~6.19 in~\cite{GarLup17ReprGrpdLp}, since we do not know in general whether 
$F^p(G,X)$ is isometrically isomorphic to $F^p(G\ltimes X)$.)

\begin{lma}
\label{prp:amenableCrProd}
Let $p\in [1,\infty)$, and let $G\curvearrowright X$ be an amenable action of a discrete group $G$ on a compact, Hausdorff space $X$.
Then the canonical contractive homomorphism 
$\kappa_{(G,X)}^p\colon F^p(G,X) \to F^p_\lambda(G,X)$ is an isometric isomorphism.
\end{lma}
\begin{proof}
This is proved identically to the implication (1) $\Rightarrow$ (2) 
of Theorem~5.3 in~\cite{Ana02AmenExactDynSysCa}. We omit the details.
\end{proof}

We conclude this section with the following result on tensor products of
amenable groupoids.

\begin{thm}
\label{prp:TensMinMax}
Let $p\in [1,\infty)$, and let $\mathcal{G}$ and $\mathcal{H}$ be 
amenable, \'etale, Hausdorff groupoids with compact unit spaces.
There are natural isometric isomorphisms
\[
F^p_\lambda(\mathcal{G}) \tensMax^p F^p_\lambda(\mathcal{H}) 
\cong F^p_\lambda(\mathcal{G}) \otimes^p_\spat F^p_\lambda(\mathcal{H})
\cong F^p_\lambda(\mathcal{G}\times \mathcal{H}).
\]
\end{thm}
\begin{proof}
We write $\kappa^p_{\mathcal{G}}$, $\kappa^p_{\mathcal{H}}$ and 
$\kappa^p_{\mathcal{G}\times\mathcal{H}}$ for the canonical unital, 
contractive homomorphisms with dense range from the full to the reduced groupoid 
$L^p$-operator algebras of the groupoids in question.
We write $\tau^p_{\mathcal{G},\mathcal{H}}\colon F^p_\lambda(\mathcal{G}) \tensMax^p F^p_\lambda(\mathcal{H})\to
F^p_\lambda(\mathcal{G}) \otimes^p_\spat F^p_\lambda(\mathcal{H})$ for the canonical map.
Using the maps $\varphi_\lambda^p$ and $\varphi^p$ from \autoref{prp:tensCrProd},
we obtain the following commutative diagram:
\[
\xymatrix@C-10pt@R-5pt{
&& F^p(\mathcal{G}) \tensMax^p F^p(\mathcal{H}) \ar[ddll]_-{\kappa_{\mathcal{G}}^p\otimes \kappa_{\mathcal{H}}^p}
\ar[rr]^-{\varphi^p}
&& F^p(\mathcal{G}\times\mathcal{H}) \ar[dd]^{\kappa^p_{\mathcal{G}\times \mathcal{H}}} \\
\\
F^p_\lambda(\mathcal{G}) \tensMax^p F^p_\lambda(\mathcal{H}) \ar[rr]_{\tau^p_{\mathcal{G},\mathcal{H}}}
&& F^p_\lambda(\mathcal{G}) \otimes^p_\spat F^p_\lambda(\mathcal{H})
\ar[rr]_-{\varphi^p_\lambda}
&& F^p_\lambda(\mathcal{G}\times\mathcal{H}).\\
}
\]
Observe that $\kappa_{\mathcal{G}}^p\otimes \kappa_{\mathcal{H}}^p$, $\varphi^p$ and 
$\kappa^p_{\mathcal{G}\times \mathcal{H}}$ are isometric isomorphisms (see \autoref{prp:amenableCrProd} and \autoref{prp:tensCrProd}).
Thus, the identity
\[\kappa^p_{\mathcal{G}\times \mathcal{H}}\circ \varphi^p\circ (\kappa_{\mathcal{G}}^p\otimes \kappa_{\mathcal{H}}^p)^{-1}=\varphi^p_\lambda\circ \tau^p_{\mathcal{G},\mathcal{H}}\]
implies that $\varphi^p_\lambda\circ \tau^p_{\mathcal{G},\mathcal{H}}$ is an isometric isomorphism. Since $\tau^p_{\mathcal{G},\mathcal{H}}$ and $\varphi^p_\lambda$ are contractive,
we deduce that they must be isometric, and hence also isomorphisms (since
their ranges are dense). This finishes the proof.
\end{proof}

\section{Tensor products of \texorpdfstring{$L^p$}{Lp}-Cuntz algebras}
\label{sec:tensCuntz}

Tensor products of Cuntz algebras have played a pivotal role in the study of the structure and classification of simple, purely infinite, nuclear \ca{s} (also known as \emph{Kirchberg algebras}).
A particularly remarkable result in this direction, which was instrumental in the classification results of Kirchberg and Phillips, is Elliott's theorem that $\Ot\cong \Ot\otimes\Ot$;
see \cite{Ror94ShortElliottsThem} for a self-contained account.

In \cite{Phi12arX:LpAnalogsCtz}, Phillips introduced $L^p$-analogs $\Onp$ of the Cuntz algebras, and proved that these $L^p$-operator algebras share many remarkable properties with their $C^*$-versions. 
It is then natural to explore the extent to which the $K$-theoretic classification theory for Kirchberg algebras can be extended to the $L^p$-setting;
in particular, it becomes indispensable to know whether 
$\Otp$ is isometrically isomorphic to its tensor
square (with respect to either $\tensMax^p$ or $\tensSp^p$).
In this section, we show that this is not the case, and deduce that purely infinite, simple, amenable $L^p$-operator algebras are not classified by $K$-theory.
This answers several questions of Phillips.

We begin by recasting Phillips' construction of $\Onp$.

\begin{pgr}
Let $A$ be a unital Banach algebra.
Recall that $A_{\mathrm{h}}$ denotes the set of hermitian elements in $A$ (see 
the beginning of Section~2 for the definition). 
Let $a\in A$.
An element $b\in A$ is called a \emph{Moore-Penrose inverse} of~$a$ if $a=aba$ and $b=bab$, and if $ab,ba\in A_{\mathrm{h}}$.
It is well-known that $a$ has at most one Moore-Penrose inverse, which allows us to denote it by $a^\dagger$ (if it exists).

Following Mbekhta, \cite{Mbe04PartIsom}, we say that $a\in A$ is a \emph{MP-partial isometry} if $a$ is contractive and has a contractive Moore-Penrose inverse.
If $a$ is a MP-partial isometry, then so is $a^\dagger$, and we have $(a^\dagger)^\dagger=a$. (If $A$ is a \ca\ and $a\in A$, then $a$ is a 
MP-partial isometry if and only if $a^*a$ is a projection, in which case 
$a^\dagger=a^*$.)


Let $p\in[1,\infty)$, let $\mu$ be a ($\sigma$-finite) measure space.
Then $a\in \Bdd(L^p(\mu))$ is a MP-partial isometry if and only if $a$ is a \emph{spatial partial isometry} in the sense of Definition~6.4 in \cite{Phi12arX:LpAnalogsCtz}.
\end{pgr}

Let us recall the necessary notions from Definition~7.4(2) in \cite{Phi12arX:LpAnalogsCtz}.

\begin{dfn}
\label{dfn:Ln}
Let $n\in\NN$ with $n\geq 1$.
The \emph{Leavitt algebra} $L_n$ is the universal unital complex algebra generated by elements $s_1,\ldots,s_n,t_1,\ldots,t_n$ satisfying
\[
t_js_k=\delta_{j,k}, \andSep 
\sum_{j=1}^n s_jt_j=1,
\]
for $j,k=1,\ldots,n$.
Let $p\in [1,\infty)$, and let $E$ be an $L^p$-space.
A \emph{spatial representation} of $L_n$ on $E$ is a unital homomorphism $\rho\colon L_n\to \Bdd(E)$ such that $\rho(s_j)$ is a MP-partial isometry with $\rho(s_j)^\dagger=\rho(t_j)$, for all $j=1,\ldots,n$.
\end{dfn}

Examples of spatial representations are easy to construct using shift operators on $\ell^p(\NN)$.
By Theorem~8.7 in~\cite{Phi12arX:LpAnalogsCtz}, if $\rho_1$ and $\rho_2$ are spatial representations of $L_n$ on $L^p$-spaces, then $\|\rho_1(x)\|=\|\rho_2(x)\|$ for all $x\in L_n$.
The \emph{$L^p$-Cuntz algebra} $\Onp$ is then defined as $\Onp=\overline{\rho(L_n)}$ for any spatial representation $\rho$.
For $p=2$, one gets the usual Cuntz $C^*$-algebra $\mathcal{O}_n$ from \cite{Cun77SimpleCAlgGenIsom}.

It was observed in \cite{GarLup17ReprGrpdLp} that $\Onp$ is the groupoid algebra
associated to the groupoid of a graph.
We will need to realize $\Onp$ as the algebra associated to a transformation groupoid, and we begin by introducing some notation. 

\begin{pgr}
A directed graph $E=(E^0,E^1,r,s)$ is a set $E^0$ of vertices and a set $E^1$ of edges together with source and range maps $s,r\colon E^1\to E^0$.
We assume that $E^0$ and $E^1$ are finite and that $E$ has no sinks, that is $s^{-1}(v)\neq\emptyset$ for all $v\in E^0$.
Set
\[
E^\infty := \big\{ (x_1,x_2,\ldots)\colon x_k\in E^1, r(x_k)=s(x_{k+1}) \text{ for all } k\geq 1 \big\}.
\]
We equip $E^\infty$ with the topology inherited from the product topology on $(E^1)^\NN$, which turns it into a totally disconnected, compact, Hausdorff space.
Define the shift map $\sigma_E\colon E^\infty\to E^\infty$ by
$\sigma_E (x_1,x_2,\ldots) = (x_2,x_3,\ldots)$ for all $(x_1,x_2,\ldots)\in E^\I$.

The \emph{graph groupoid} $\calG_E$ associated to $E$ is defined as
\[
\calG_E := \left\{ (x,k,y) \in E^\infty\times\ZZ\times E^\infty\colon \begin{aligned}
&\text{there are } m,n\geq 0 \text{ satisfying }\\
& k=m-n \mbox{ and } \sigma_E^m(x)=\sigma_E^n(y) \end{aligned}
\right\},
\]
together with range and source maps given by
$r (x,k,y)  = (x,0,x)$ and $s(x,k,y)  = (y,0,y)$,
and composition and inversion given by
\[
(x,k,y)(y,l,z) = (x,k+l,z), \andSep
(x,k,y)^{-1} = (y,-k,x).
\]
We equip $\calG_E$ with the topology inherited from the product topology on $E^\infty\times\ZZ\times E^\infty$.
Then $\calG_E$ is a locally compact, Hausdorff, \'etale groupoid.
Its unit space is $\calG_E^{(0)}=\{(x,0,x):x\in E^\infty\}$, which we identify with $E^\infty$;
see also Example~2.4.7 in \cite{Sim17arX:EtaleGpds} (but note the subtle difference in the definition of $E^\infty$ with the direction of arrows in a path reversed).
\end{pgr}

\begin{dfn}
Given $n\geq 2$, let $E_n$ be the graph with one vertex and $n$ edges (loops at the vertex). We define the \emph{Cuntz groupoid} $O_n$ to be the groupoid associated to $E_n$ as in the paragraph above.
\end{dfn}

Next, we realize $O_2$ as a transformation groupoid. The identification is probably known
to the experts, but we were not able to find a
suitable reference. 

\begin{prp}
\label{prp:CtzGrpdAsCrProd}
There exists an amenable, topologically free action of $\ZZ_2\ast\ZZ_3$ on the Cantor space $X$ such that $O_2\cong(\ZZ_2\ast\ZZ_3)\ltimes X$ as topological groupoids.
\end{prp}
\begin{proof}
Consider the following directed graphs:
\begin{eqnarray*}
\xymatrix{
F: & \bullet \ar[r]^{e} & \bullet \ar@(ul,ur)^{f_1}\ar@(dl,dr)_{f_2} 
& & &
E_2: & \bullet \ar@(ul,ur)^{f_1}\ar@(dl,dr)_{f_2}
}
\end{eqnarray*}

\textbf{Claim~1}: \emph{We have $\calG_F \cong \calG_{E_2}$.}
Define $\varphi\colon F^\infty\to E_2^\infty$ and $\varepsilon\colon F^\infty\to\{1,2\}$ by
\[
\varphi(x_1,x_2,x_3,\ldots)
= \begin{cases}
(f_1,x_2,x_3,\ldots) & \text{if } x_1=e, \\
(f_2,x_1,x_2,\ldots) & \text{if } x_1\neq e.
\end{cases}
\quad
\varepsilon(x_1,x_2,\ldots)
= \begin{cases}
1 & \text{if } x_1=e, \\
2 & \text{if } x_1\neq e.
\end{cases}
\]
Then $\varphi$ is a homeomorphism.
Further, note that $\sigma_{E_2}^{\varepsilon(x)}(\varphi(x))=\sigma_F(x)$ for every $x\in F^\infty$.
Finally, define $\Phi\colon\calG_F\to\calG_{E_2}$ by
\[
\Phi(x,k,y) := (\varphi(x),k-\varepsilon(x)+\varepsilon(y),\varphi(y)).
\]
The map is well-defined since if $m,n\geq 0$ satisfy $k=m-n$ and $\sigma_F^m(x)=\sigma_F^n(y)$, then $k-\varepsilon(x)+\varepsilon(y) = [m+\varepsilon(y)]-[n+\varepsilon(x)]$ and
\[
\sigma_{E_2}^{m+\varepsilon(x)}(\varphi(x))
= \sigma_{F}^{m+1}(x)
= \sigma_F^{n+1}(y)
= \sigma_{E_2}^{n+\varepsilon(y)}(\varphi(y)).
\]

It is straightforward to check that $\Phi$ is surjective and compatible with the range and source maps and with composition. This proves the claim.
\vspace{.2cm}

\textbf{Claim~2:} 
\emph{there is an action $\ZZ_2\ast\ZZ_3\curvearrowright F^\infty$ satisfying $\calG_F \cong (\ZZ_2\ast\ZZ_3)\ltimes F^\infty$.}
Let $a\in\ZZ_2$ and $b\in\ZZ_3$ denote generators.
Set:
\begin{align*}
a\cdot(x_1,x_2,x_3,\ldots)
&= \begin{cases}
(e,x_1,x_2,x_3,\ldots) & \text{if } x_1\neq e, \\
(x_2,x_3,\ldots) & \text{if } x_1=e.
\end{cases}
\\
b\cdot (x_1,x_2,x_3,\ldots)
&= \begin{cases}
(e,f_1,x_1,x_2,x_3,\ldots) & \text{if } x_1\neq e, \\
(e,f_2,x_3,\ldots) & \text{if } x_1=e, x_2=f_1, \\
(x_3,\ldots) & \text{if } x_1=e, x_2=f_2, \\
\end{cases}
\end{align*}
It is straightforward to verify that $a$ and $b$ define homeomorphisms on $F^\infty$ of order two and three, respectively.
We thus obtain an action $\ZZ_2\ast\ZZ_3\curvearrowright F^\infty$.

We identify $E_2^\infty$ with a subset of $F^\infty$.
Further, we let $eE_2^\infty$, $ef_1E_2^\infty$ and $ef_2E_2^\infty$ denote the clopen subsets of $F^\infty$ consisting of sequences starting with $e$, with $ef_1$, and with $ef_2$, respectively.
Define $\varepsilon_a,\varepsilon_b\colon F^\infty\to\ZZ$ by
\[
\varepsilon_a(x)
= \begin{cases}
+1 & \text{if } x\in E_2^\infty, \\
-1 & \text{if } x\in eE_2^\infty,
\end{cases}
\quad
\varepsilon_b(x)
= \begin{cases}
+2 & \text{if } x\in E_2^\infty, \\
0 & \text{if } x\in ef_1E_2^\infty, \\
-2 & \text{if } x\in ef_2E_2^\infty.
\end{cases}
\]
For $g=g_1g_2\ldots g_n\in\ZZ_2\ast\ZZ_3$,
with $g_j\in \{a,b\}$, we let
$\varepsilon_g\colon F^\infty\to\ZZ$ be given by
\[
\varepsilon_{g_1g_2\ldots g_n}(x)
:=\varepsilon_{g_1}(g_2\cdots g_n\cdot x)+\varepsilon_{g_2}(g_3\cdots g_n\cdot x)+\ldots+ \varepsilon_{g_n}(x).
\]
One checks that $\varepsilon_g$ is well-defined,
and that the conditions $g\cdot x=x$ and $\varepsilon_g(x)=0$ imply $g=1$. 
Given $g\in\ZZ_2\ast\ZZ_3$, we define $\Psi_g\colon \calG_F\to\calG_F$ by
\[
\Psi_g(x,k,y)=(gx,k+\varepsilon_g(x),y).
\]
Then $\Psi_g$ is a well-defined bijection and $\Psi_{gh}=\Psi_g\circ\Psi_h$ for $g,h\in\ZZ_2\ast\ZZ_3$. 
Define a map $\Omega\colon(\ZZ_2\ast\ZZ_3)\ltimes F^\infty\to\calG_F$ by
\[
\Omega(g,x)=\Psi_g(x,0,x) = (gx, \varepsilon_g(x),x).
\]
We want to show that $\Omega$ is an isomorphism of topological groupoids.
To check that~$\Omega$ preserves composition of arrows, let $g,h\in\ZZ_2\ast\ZZ_3$ and $x\in F^\infty$.
Then
\begin{align*}
\Omega(g,hx)\Omega(h,x)
&= (ghx, \varepsilon_{g}(hx),hx) (hx, \varepsilon_h(x),x) \\
&= (ghx, \varepsilon_{g}(hx)+\varepsilon_h(x),x) \\
&= (ghx, \varepsilon_{gh}(x),x)
= \Omega(gh,x),
\end{align*}
as desired. Moreover, $\Omega$ is injective since
$\Omega(g,x)=\Omega(h,y)$ implies 
$x=y$, $(gh^{-1})\cdot x=x$, and $\varepsilon_{gh^{-1}}(x)=0$ (the last two together imply $g=h$). Surjectivity is proved using similar
arguments, and is left to the reader. This proves the claim.
\vspace{.2cm}

It follows from Claims~1 and~2 that 
$O_2=\calG_{E_2}\cong(\ZZ_2\ast\ZZ_3)\ltimes X$. 
Finally, the facts that the action $\ZZ_2\ast\ZZ_3\curvearrowright X$ is 
amenable and topologically free follow, respectively,
from the facts that $O_2$ is amenable and effective (see Theorem~3.8 and Example~4.4 in~\cite{Ana02AmenExactDynSysCa} and Theorem~4.3.6 in~\cite{Sim17arX:EtaleGpds}).
\end{proof}

\begin{rmk}\label{rmk:ZkZn+1}
We mention without proof that the construction above
can be generalized to show that for every $k\in\NN$ and every $n\geq 2$,
the groupoid $M_k(O_n)$ can be realized as 
the transformation groupoid of an amenable, topologically free
action of $\mathbb{Z}_k\ast \mathbb{Z}_{n+1}$ on the Cantor space. (Implicit in the proof above is the fact that $M_2(O_2)$ and $)_2$ are isomorphic as groupoids.)
\end{rmk}

Combining \autoref{prp:CtzGrpdAsCrProd}, \autoref{prp:amenableCrProd} and \autoref{prp:TensMinMax}, we deduce that when taking tensor products
of $\Otp$ with itself, we may choose either $\tensSp$ or $\tensMax$:

\begin{cor}
\label{prp:CtzAlgsCrossedProd}
There exists an amenable, topologically free action of $\ZZ_2\ast\ZZ_3$ on the Cantor space $X$ such that, for every $p\in [1,\I)$, there are
isometric isomorphisms
\[
\Otp
\cong F^p(\ZZ_2\ast\ZZ_3, X)
= F^p_\lambda(\ZZ_2\ast\ZZ_3, X).
\]
Furthermore, for $n\geq 1$ we have
\[
\underbrace{\Otp\tensMax^p\cdots\tensMax^p\Otp}_{n}
\cong \underbrace{\Otp \otimes^p_\spat\cdots\otimes^p_\spat \Otp}_{n}
\cong F^p_\lambda((\ZZ_2\ast\ZZ_3)^n, X^n).
\]
\end{cor}

Since the choice of tensor product is irrelevant when taking tensor products
of $\Otp$ with itself, we will from now on 
just write 
$\Otp\otimes^p\cdots\otimes^p\Otp$.

We make a small digression to establish some facts from geometric group theory that will be needed.
In \cite[Theorem~3.2]{MedSauTom17CantorSysQuasiIsom} it is shown that finitely generated groups are bi-Lipschitz equivalent (see Definition~2.6 in~\cite{MedSauTom17CantorSysQuasiIsom}) if and only if they admit free actions on the Cantor set that are continuously orbit equivalent.
An inspection of their proof shows that it suffices 
to assume \emph{topological freeness} of the actions
in order to conclude that the groups are
bi-Lipschitz equivalent\footnote{When defining the map between groups, just pick a point with trivial stabilizer. 
This is possible whenever the space is Baire; 
see, for example, the proof of Proposition~4.6 in \cite{GarGefKraNar23ClassCrProdNonamenGp}.}, and that the spaces are compact and Hausdorff. 
We may thus restate their result as follows:

\begin{thm}[Medynets-Sauer-Thom]
\label{prp:MST}
Let $G$ and $H$ be finitely generated groups.
Then $G$ and $H$ are bi-Lipschitz equivalent
if and only if there are continuously orbit equivalent, topologically free actions 
of $G$ and $H$ on compact Hausdorff spaces.
\end{thm}

We use $\asdim(G)$ to denote the asymptotic dimension of a group $G$;
we refer to \cite{BelDra08Asdim} for the definition and the basic properties of this dimension theory.
It is well-known that if two finitely generated groups $G$ and $H$ are bi-Lipschitz equivalent, then they are coarsely equivalent and therefore $\asdim(G)=\asdim(H)$.
Thus, combining \autoref{thm:RigidityDynSysts} and \autoref{prp:MST}, we obtain:

\begin{cor}
\label{prp:IsoCrProdBiLip}
Let $p\in [1,\infty)\setminus\{2\}$, let $G$ and $H$ be finitely generated groups, 
let $X$ and $Y$ be compact Hausdorff spaces, and let $G\curvearrowright X$ and $H\curvearrowright Y$ be topologically free actions such that $F^p_\lambda(G,X)$ and $F^p_\lambda(H,Y)$ are isometrically isomorphic.
Then $G$ and $H$ are bi-Lipschitz equivalent.
In particular, $\asdim(G)=\asdim(H)$.
\end{cor}

The following result is well-known to experts. 

\begin{lma}
\label{prp:asdimZ2Z3}
Let $n\in\NN$.
Then $\asdim((\ZZ_2\ast \ZZ_{3})^n)=n$.
\end{lma}
\begin{proof}
Let $G_1,\ldots,G_n$ be finitely generated groups satisfying $\asdim(G_k)=1$ for $k=1,\ldots,n$.
We claim that $\asdim(\prod_{k=1}^n G_k)=n$.

By Theorem~32 in \cite{BelDra08Asdim}, we have $\asdim(H_1\times H_2)\leq\asdim(H_1)+\asdim(H_2)$ for all (finitely generated) groups $H_1$ and $H_2$.
We deduce $\asdim(\prod_{k=1}^n G_k)\leq n$.
The converse inequality follows from Theorem~1 in~\cite{BanBan22AsymDimProd}.
Now the statement follows from the fact $\asdim(\ZZ_2\ast \ZZ_{3})=1$; see, for instance, Section~17 of~\cite{BelDra08Asdim}.
\end{proof}

We have arrived at the main result of this section.

\begin{thm}
\label{prp:O2O2Noniso}
Let $p\in [1,\infty)\setminus\{2\}$, and let $m,n\in\NN$.
Then there is an isometric isomorphism
\[
\underbrace{\Otp\otimes^p\cdots\otimes^p\Otp}_\text{m} \cong \underbrace{\Otp\otimes^p\cdots\otimes^p\Otp}_\text{n} 
\]
if and only if $m=n$.
In particular, $\Otp$ is not isometrically isomorphic to $\Otp\otimes^p\Otp$.
\end{thm}
\begin{proof}
We need to show the forward implication.
Assume that there is an isometric isomorphism as in the statement.
Let $\ZZ_2\ast\ZZ_3\curvearrowright X$ be the topologically free action on the Cantor space $X$ as in \autoref{prp:CtzAlgsCrossedProd}.
It follows that the $m$-fold and the $n$-fold power of this action have isometrically isomorphic reduced crossed products.
Applying \autoref{prp:asdimZ2Z3} and \autoref{prp:IsoCrProdBiLip}, we obtain
\[
m
= \asdim((\ZZ_2\ast \ZZ_{3})^m)
= \asdim((\ZZ_2\ast \ZZ_{3})^n)
= n. \qedhere
\]
\end{proof}

For $p=2$, it is known that $\Ot\otimes\cdots\otimes \Ot\cong \Ot$ as \ca{s}, by iterating the theorem of Elliott mentioned at the beginning of this section.
However, the isomorphism is produced very indirectly, and there is no known explicit formula for it (for example, in terms of the canonical generators).
Further, it is a folklore result, which is implicitly contained in early work of Cuntz \cite{Cun80AutCertainSimpleCAlg}, that there is no isomorphism between  $\Ot\otimes\cdots\otimes \Ot$ and $\mathcal{O}_2$ that preserves the canonical Cartan subalgebras.
We sketch a proof based on the results in \cite{Cun80AutCertainSimpleCAlg}, and we also include \autoref{thm:O2O2Cartan} below with a proof based on geometric group theory.

We abbreviate the $n$-fold tensor product of $\Ot$ with itself by $\mathcal{O}_2^{\otimes n}$. 
By \cite[Proposition~3.1]{Cun80AutCertainSimpleCAlg}, a pure state on the canonical Cartan subalgebra $D_2$ in $\mathcal{O}_2$ either has a unique extension to a pure state on $\mathcal{O}_2$, or the family of extensions to pure states on $\mathcal{O}_2$ is homeomorphic to $\mathbb{T}$, and both cases occur.
It follows that there are pure states on the canonical Cartan subalgebra $D_2^{\otimes n}$ in $\mathcal{O}_2^{\otimes n}$ whose set of extensions to a pure state on $\mathcal{O}_2^{\otimes n}$ contains a subset homeomorphic to $\mathbb{T}^n$.
Since this is not the case for any pure state of the Cartan subalgebra~$D_2$ in~$\mathcal{O}_2$, this shows that there is no isomorphism between $\mathcal{O}_2^{\otimes n}$ and $\mathcal{O}_2$ that preserves the canonical Cartan subalgebras.

\begin{thm}
\label{thm:O2O2Cartan}
No C*-algebraic isomorphism between
$\mathcal{O}_2^{\otimes m}$ and $\mathcal{O}_2^{\otimes n}$ preserves the canonical Cartan subalgebras if $m \neq n$.
\end{thm}
\begin{proof}
This is essentially the same proof as for 
\autoref{prp:O2O2Noniso}; there, the property corresponding to preservation of the
Cartan subalgebras (preservation of the C*-cores)
is automatic, since $p\neq 2$. 
Instead of applying
\autoref{prp:IsoCrProdBiLip},
one uses Theorem~1.2 in~\cite{Li18CtsOrbitEquiv}
to deduce that the systems
$(\ZZ_2\ast\ZZ_3)^m\curvearrowright X^m$ and 
$(\ZZ_2\ast\ZZ_3)^n\curvearrowright X^n$
provided by \autoref{prp:CtzAlgsCrossedProd}
are continuously orbit equivalent. 
The result is then obtained by applying \autoref{prp:asdimZ2Z3} and \autoref{prp:IsoCrProdBiLip}.
\end{proof}



For comparison, we mention that Ara and Corti\~nas have shown in 
\cite{AraCor13TensLeavitt} that $L_2\odot L_2$
is not isomorphic to $L_2$. Their methods are 
quite different from ours; in fact, the invariant
they used to distinguish $L_2\odot L_2$ and $L_2$, Hochschild homology,
cannot distinguish between $\Otp\otimes^p\Otp$ 
and $\Otp$. 
We do not know \emph{any} homotopy-invariant functor that is able to distinguish between
$\Otp\otimes^p\Otp$ and $\Otp$ when $p\neq 2$.
In particular, these algebras are not distinguishable by $K$-theory, as we show in
\autoref{prop:KtheoryTheSame} below. As a 
preparatory result, we show that 
$M_2^p\otimes^p\Otp$ is 
isometrically isomorphic to $\Otp$. Since
the proof is the same, we do it in greater 
generality. 
Recall that if $\varphi_0$ is any spatial representation of a Leavitt path algebra $L_n$ on an $L^p$-space, then $\varphi_0$
extends to an isometric representation of $\Onp$.

\begin{prp}
\label{prop:OnpMatrices}
Let $k, r \in\NN$ with $k\geq 1$, and let $p\in [1,\infty)$.
Then $M^p_{2^r}\otimes^p \mathcal{O}_{2k}^p$ is isometrically isomorphic to $\mathcal{O}_{2k}^p$.
\end{prp}
\begin{proof}
By finite induction, it is clearly enough to prove the result for $r=1$.
For $j=1,\ldots,k$, we define
\[
x_{2j-1}=\begin{pmatrix}
s_j & s_{j+1} \\
0 & 0
\end{pmatrix}, \ \mbox{ and } \ x_{2j}=\begin{pmatrix}
0 & 0 \\
s_j & s_{j+1}
\end{pmatrix},
\]
and their reverses
\[
y_{2j-1}=\begin{pmatrix}
t_j & 0 \\
t_{j+1} & 0
\end{pmatrix}, \ \mbox{ and } \ y_{2j-1}=\begin{pmatrix}
0 & t_j  \\
0 & t_{j+1}
\end{pmatrix}.
\]
One checks that these are spatial partial isometries satisfying the relations in the definition of $\mathcal{O}_{2k}^p$.
By the universal property of $L_{2k}$, there is a unital homomorphism
$\varphi_0\colon L_{2k}\to M_2^p \otimes^p \mathcal{O}_{2k}^p$ defined by
$\varphi_0(s_j)=x_j$ and $\varphi_0(t_j)=y_j$ for all $j=1,\ldots,2k$.
One easily checks that $\varphi_0$ is spatial (in the sense of the comments after \autoref{dfn:Ln}),
and hence it extends to an isometric homomorphism 
\[
\varphi\colon \mathcal{O}^p_{2k}\to M_2^p \otimes^p \mathcal{O}_{2k}^p.
\]
Since the elements $x_1,\ldots,x_{2k}, y_1,\ldots,y_{2k}$
generate all of $M_2^p \otimes^p \mathcal{O}_{2k}^p$, we deduce that $\varphi$
has dense range and hence is an isometric isomorphism.
\end{proof}

For $p\in [1,\infty)$, we set 
\[
\overline{M}_\infty^p=\overline{\bigcup\limits_{n\in\NN} \Bdd(\ell^p(\{1,\ldots,n\}))}\subseteq \Bdd(\ell^p(\NN)).
\]
For $p>1$, it is known that $\overline{M}_\infty^p$ agrees with $\K(\ell^p(\NN))$, but this fails for $p=1$ (see Example~1.10 in~\cite{Phi13arX:LpCrProd}).
Regardless of $p$, matrix stability of $K$-theory together with continuity 
with respect to inductive limits shows that 
$K_\ast(A\otimes^p_\spat \overline{M}_\infty^p)$ is isomorphic to 
$K_\ast(A)$ for any algebra $A$. We will need the following
observation.

\begin{rmk}
If $D$ is a direct limit of algebras of the form $\Bdd(\ell^p(\{1,\ldots,n\}))$,
for $n\in\NN$, then $A\tensMax^p D$ and $A\tensSp^p D$ are canonically isometrically isomorphic for every $L^p$-operator algebra $A$.
This is essentially immediate 
for the matrix algebra $\Bdd(\ell^p(\{1,\ldots,n\}))$, and the result for 
$D$ is obtained by taking direct limits. In particular, this applies to
$\overline{M}_\infty^p$ as well as to any spatial UHF-algebra 
(\cite{Phi13arX:LpCrProd}).
\end{rmk}

\begin{prp}
\label{prop:KtheoryTheSame}
Let $p\in [1,\infty)$, let $n\in\NN$, and let $A_n$ denote the $n$-fold tensor product of $\Otp$ with itself. 
Then $A_n$ is simple, purely infinite, and amenable, with
\[
K_0(A_n)=K_1(A_n)=\{0\}.
\]
Moreover, $A_n$ is isometrically isomorphic to $A_m$ if and only if $n=m$.
\end{prp}
\begin{proof}
Since $\Otp$ is amenable, so is the $n$-fold projective tensor product $\Otp \widehat{\otimes} \dots \widehat{\otimes} \Otp$. The natural homomorphism from this Banach algebra to $A_n$ is continuous with dense range, so $A_n$ is amenable. 
Simplicity and pure infiniteness follow from Theorem~7.9 in~\cite{AraGooPerSil10NonSimplePI},
since the combination of simplicity and
pure infiniteness passes from a dense subring to the containing Banach algebra.

It remains to compute the $K$-theory of $A_n$, which we do by induction on $n$.
For $n=1$, this was shown by Phillips in Theorem~7.19 of~\cite{Phi13arX:LpCrProd}.
Assume that we have proved the result for $A_n$, and let us show it for $A_{n+1}=A_n\otimes^p\Otp$.

Denote by $B$ the tensor product of the spatial 
$L^p$-UHF algebra of type $2^{\infty}$ with $\overline{M}_\infty^p$, 
identified with the tensor product $\bigotimes_{k\in\ZZ}M_2^p$ as in Section~7 of~\cite{Phi13arX:LpCrProd}, 
and let $\beta\colon \ZZ\to\Aut(B)$ denote the bilateral shift.
By Theorem~7.17 in~\cite{Phi13arX:LpCrProd}, there is an isometric
isomorphism $\Otp\otimes^p\overline{M}_\infty^p\cong F^p(\ZZ,B,\beta)$, and thus
\[
(\Otp\otimes^p \overline{M}_\infty^p)\tensMax^p {A_n}
\cong F^p(\ZZ,B,\beta)\tensMax^p A_n,
\]
so in particular these two algebras have isomorphic $K$-theory.
An argument identical to the one given in the second part of 
\autoref{prp:TensMinMax}, using universal properties, shows 
that 
the right-hand side can be canonically identified with
the full crossed product of the action $\beta\otimes\id_{A_n}\colon \ZZ\to \Aut(B\otimes^p {A_n})$.

We claim that $B\otimes^p {A_n}$ has trivial $K$-theory. 
Since $B=M_{2^\infty}^p\otimes^p \overline{M}_\infty^p$, it suffices to show that $M_{2^\infty}^p\otimes^p {A_n}$ has trivial $K$-theory. 
Moreover, since said algebra is a direct limit of $M^p_{2^k}\otimes^p_\spat A_n$, for $k\in\NN$, it is enough to show that $M_{2^k}^p\otimes^p_\spat {A_n}$ has trivial $K$-theory.
Now, $M^p_{2^k}\otimes^p {A_n}$ is isomorphic to ${A_n}$ by \autoref{prop:OnpMatrices}, so the 
claim follows from the inductive step.

Finally, the $K$-theory of $F^p(\ZZ,B\otimes^p {A_n},\beta\otimes\id_{A_n})$ can be computed using the $L^p$-analog of Pimsner--Voiculescu's 6-term exact sequence (Theorem~6.15 in~\cite{Phi13arX:LpCrProd}), which yields
\[
K_0(F^p(\ZZ,B\otimes^p {A_n},\beta\otimes\id_{A_n}))
\cong K_1(F^p(\ZZ,B\otimes^p{A_n},\beta\otimes\id_{A_n}))\cong\{0\},
\]
as desired. 
The last assertion in the statement is \autoref{prp:O2O2Noniso}.
\end{proof}

As remarked in the introduction, this result stands in stark contrast with the Kirchberg--Phillips classification of simple, purely infinite, amenable
\ca s by $K$-theory.
The following is inspired by Kirchberg's 
$\mathcal{O}_2$-embedding theorem \cite{KirPhi00EmbExactCAlg}.

\begin{qst}
Let $p\in [1,\infty)\setminus\{2\}$.
Does every simple, separable, unital, amenable $L^p$-operator algebra 
embed unitally and contractively into $\mathcal{O}_2^p$?
\end{qst}
 
We suspect that this question has a negative answer, and that $\mathcal{O}_2^p\otimes^p\Otp$ is a counterexample. 
However, the techniques developed in this paper seem to be insufficient to rule out the existence of a unital, contractive map $\Otp\otimes^p\Otp\to \Otp$. 
This question is explored in \cite{GarGun24pre:EmbLpOpAlg}.

Another interesting
Banach-algebra completion of the Leavitt path algebra $L_2$ 
has been constructed in \cite{DawHor22PrlyInfCuLike}; 
see Section~3 there and specifically Remark~3.11. 
In the light of \autoref{prp:O2O2Noniso}, it is a natural problem to determine whether the 
Banach algebra constructed there is self-absorbing in a 
suitable sense.


\end{document}